 \documentclass[11pt]{amsart}
\usepackage{natbib}
\bibpunct{[}{]}{;}{n}{,}{,}
\usepackage{amsmath,amssymb,amsbsy,amsfonts}
\usepackage{graphicx}        
\usepackage{multicol}        
\usepackage{epsfig}
\usepackage{color}
\usepackage{wrapfig}
\usepackage{apptools}
\usepackage{txfonts}
\usepackage{pxfonts}
\newtheorem{thm}{Theorem}[section]
\newtheorem{prop}[thm]{Proposition}
\newtheorem{cor}[thm]{Corollary}
\newtheorem{lem}[thm]{Lemma}
\theoremstyle{remark}
\newtheorem{rem}[thm]{Remark}

\newcommand{\n}{\noindent}

\newcommand{\G}{\mathcal{G}}
\newcommand{\Q}{\mathcal{Q}}

\def\smallcircledV{{\scriptscriptstyle{\bigcirc\kern-5.66pt\vee\kern 1.33pt}}}

\newcommand{\real}{\mathbb{R}}


\newcommand{\R}{{\mathbb R}}

\numberwithin{equation}{section}

\begin{document}

\title[Conservative Spectral Boltzmann Scheme Analysis]{Convergence and error estimates for the Lagrangian based conservative spectral method for  Boltzmann equations}
\date{}

\author[R. J. Alonso]{Ricardo J. Alonso}
\address{R.J. Alonso,  Department of Mathematics, P.U.C. Rio de Janeiro, Brazil}
              \email{ricardoalonsoplata@gmail.com}       
\author[I.  M. Gamba]{Irene M. Gamba}
\address{I. M. Gamba, Department of Mathematics, University of Texas at Austin.}
\email{gamba@math.utexas.edu}
\author[S. H. Tharkabhushanam]{Sri Harsha Tharkabhushanam}
\address{S. H. Tharkabhushanam, BJSS - London}
           \email{Harsha.Sri.t@gmail.com}

\begin{abstract}
We develop error estimates for the semi-discrete conservative spectral method for the approximation of the elastic and inelastic space homogeneous Boltzmann equation introduced by the authors in \cite{GT09}. In addition we study the long time convergence of such semi-discrete solution to equilibrium Maxwellian distribution that conserves the mass, momentum and energy associated to the initial data.   The numerical method is based on the Fourier transform of the collisional operator and a Lagrangian optimization correction that enforces the collision invariants, namely conservation of mass, momentum and energy in the elastic case, and just mass and momentum in the inelastic one.  We present a detailed semi-discrete analysis on convergence of the proposed numerical method which includes the $L^{1}-L^{2}$ theory for the scheme.  This analysis allows us to present, additionally, convergence in Sobolev spaces and convergence to equilibrium for the numerical approximation.  The results of this work answer a long standing open problem posed by Cercignani et al. in \cite[Chapter 12]{CIP} about finding error estimates for a numerical scheme associated to the  Boltzmann equation,  as well as showing  the semi-discrete numerical solution converges to the equilibrium Maxwellian distribution associated to the initial value problem.
\end{abstract}
\maketitle

{\small
{\bf Key words.} {Nonlinear integral equations,  Rarefied gas flows, Boltzmann equations, conservative spectral methods.}\\

{\bf AMS subject classifications.}  {45E99, 35A22, 65C20}

\section{Introduction}
The Boltzmann Transport Equation is an integro-differential transport equation that describes the evolution of a single point probability density function $f(t, v, x)$ defined as the probability of finding a particle at position $x$ with kinetic velocity $v$ at time $t$. The mathematical and computational difficulties associated to the Boltzmann equation are due to the non local and non linear nature of the binary collision operator, which is usually modeled as a bilinear integral form in $d$-dimensional velocity space and unit sphere $\mathbb{S}^{d-1}$.

The focus of this manuscript is to provide a complete consistency and error analysis and long time convergence to statistical equilibrium states for the Lagrangian based conservative spectral scheme proposed in \cite{GT09} to solve the dynamics of elastic binary collisions.  In particular, the results of this work answer a long standing open problem posed by Cercignani, Illner and Pulvirenti  in \cite[Chapter 12]{CIP} about finding error estimates for a consistent non linear Boltzmann deterministic scheme for elastic binary interactions in the case of hard potentials.

The problem of computing efficiently the Boltzmann transport equation has interested many authors that have introduced different approaches.  These approaches can be classified as stochastic methods known as Direct Simulation Montecarlo Methods (DSMC \cite{bird, RjaWa05, Wagner92,GRW04}) and deterministic methods (Discrete Velocity Models \cite{kawa81, broadwell, bobyCerci99,mieuss00,HerParSea}, Boltzmann approximations - Lattice Boltzmann, BGK and Spectral methods \cite{Gab-Par-Tos, bobylevRjasanow0, pareschiRusso, bobylevFT, BCG00, BR00, pareschiPerthame, ibragRjasanow, filbetRusso1, filbetRusso, mouhotPareschi}).  Spectral based methods, our choice for this work, have been developed by I.M. Gamba and H.S Tharkabhushanam \cite{GT09} inspired in the work developed a decade earlier by  Pareschi, Gabetta and Toscani \cite{Gab-Par-Tos} and later by Bobylev and Rjasanow~\cite{bobylevRjasanow0} and Pareschi and Russo~\cite{pareschiRusso}.  The practical implementation of these methods are supported by the ground breaking work of Bobylev~\cite{bobylevFT} using the Fourier transformed Boltzmann equation to analyze its solutions in the case of Maxwell type of interactions. After the introduction of the inelastic Boltzmann equation for Maxwell type interactions and the use of the Fourier transform for its analysis in Bobylev, Carrillo and Gamba \cite{BCG00}, the spectral based approach is becoming the most suitable tool to deal with deterministic computations of kinetic models associated with the full Boltzmann collisional integral, both for elastic or inelastic interactions.  Recent implementations of spectral methods for the non-linear Boltzmann are due to Bobylev and Rjasanow~\cite{bobylevRjasanow0} who developed a method using the Fast Fourier Transform (FFT) for Maxwell type of interactions and then for Hard-Sphere interactions~\cite{BR99} using generalized Radon and X-ray transforms via FFT.  Simultaneously, L. Pareschi and B. Perthame \cite{pareschiPerthame} developed  similar scheme using FFT for Maxwell type of interactions.  Using \cite{pareschiRusso, pareschiPerthame}, Filbet and Russo in \cite{filbetRusso1} and  \cite{filbetRusso} have implemented an scheme to solve the space inhomogeneous Boltzmann equation.  We also mention the work of I. Ibragimov and S. Rjasanow~\cite{ibragRjasanow} who developed a numerical method to solve the space homogeneous Boltzmann Equation on an uniform grid for a Variable Hard Potential interactions with elastic collisions. This particular work has been a great inspiration for the current paper and was one of the first steps in the direction of a new numerical method.  

The aforementioned works on deterministic solvers for non-linear BTE have been restricted to elastic, conservative interactions.  Mouhot and Pareschi \cite{mouhotPareschi} have studied some approximation properties of the schemes. Part of the difficulties in their strategy arises from the constraint that the numerical solution has to satisfy conservation of the initial mass. To this end, the authors propose the use of a periodic representation of the distribution function to avoid aliasing.  Closely related to this problem is the fact that spectral methods do not guarantee the positivity of the solution due to the combined effects of the truncation in velocity domain (of the equation) and the application of the Fourier transform (computed for the truncated problem).  In addition to this, there is no \textit{a priori} conservation of mass, momentum and energy in \cite{filbetRusso}, \cite{filbetRusso1} and \cite{mouhotPareschi}.  In fact, the authors in \cite{FM} presented a stability and convergence analysis of the spectral method for the homogeneous Boltzmann equation for binary elastic collisions using the periodization approach proposed in those previous references. In their results, the spectral scheme enforced only mass conservation; as a consequence, the numerical solutions converge to the constant state, hence, destroying the time asymptotic behavior predicted by the Boltzmann $\mathcal{H}$-Theorem.

It is shown in this manuscript that the conservative approach scheme proposed in \cite{GT09} is able to handle the conservation problem in a natural way, by means of Lagrange multipliers, and enjoys convergence and correct long time asymptotic to the Maxwelliam equilibrium.  Our approximation by conservative spectral Lagrangian schemes and corresponding computational method is based on an  alternative approach to the work in \cite{bobylevRjasanow0} and \cite{ibragRjasanow}. This spectral approach combined with a constrain minimization problem works for elastic or inelastic collisions and energy dissipative non-linear Boltzmann type models for variable hard potentials.  We do not use periodic representations for the distribution function and the only restriction of the current method is that it requires that the distribution function to be Fourier transformable at any time step.  This requirement is met by imposing $L^2$-integrability to the initial datum. The required conservation properties of the distribution function are enforced through an optimization problem with the desired conservation quantities set as the constraints.  The correction to the distribution function that makes the approximation conservative is very small but crucial for the evolution of the probability distribution function according to the Boltzmann equation.

More recently, this conservative spectral method for the Boltzmann equation was applied to the calculation of the Boltzmann flow for anisotropic collisions, even in the Coulomb interaction regime \cite{GaHa}, where the solution of the Boltzmann equation approximates solution for Landau equation \cite{Landau37, LanLifStat}.  It has also been extended to systems of elastic and inelastic hard potential problems modeling of a multi-energy level gas \cite{MHGM14}.  In this case, the formulation of the numerical method accounts for both elastic and inelastic collisions. It was also used for the particular case of a chemical mixture of monatomic gases without internal energy. The conservation of mass, momentum and energy during collisions is enforced through the solution of constrained optimization problem to keep the collision invariances associated to the mixtures. The implementation was done in the space inhomogeneous setting (see \cite{MHGM14}, section~4.3), where the advection along the free Hamiltonian dynamics is modeled by time splitting methods following the initial approach in \cite{GT08}.  The effectiveness of the scheme applied to these mixtures has been compared with the results obtained by means of the DSMC method and excellent agreement has been observed.

In addition, this conservative spectral Lagrangian method has been implemented in a system of electron-ion in plasma 
modeled by a $2\times2$ system of Poisson-Vlasov-Landau equations \cite{zhang-gamba-Landau16} using time splitting methods, that is, staggering the time steps for advection of the Vlasov-Poisson system and the collisional system including recombinations. The constrained optimization problem is applied to the collisional step in a revised version from \cite{GT09} where such minimization problem was posed and solved in Fourier space, using  the exact formulas for the Fourier Transform of the collision invariant polynomials.  The benchmarking for the constrained optimization implementation for the mixing problem was done for an example of a space homogeneous system where the explicit decay difference for electron and ion temperatures is known \cite{zhang-gamba-Landau16}, section~7.1.2. Yet, the used scheme captures the total conserved temperature being a convex sum of the ions and electron temperatures respectively.

The key note results of the manuscript are stated in Theorem \ref{CT} in section 3.  The proof of this theorem relies on the Lagrangian correction problem that enforces conservation at the numerical level. This is a key idea that shows that the conservative spectral scheme converges to the Gaussian (Maxwellian) distribution in velocity space.  Indeed, the enforcement of the collision invariants is sufficient to show  the convergence result to the Maxwellian equilibrium in the case of an scalar space homogeneous Boltzmann equation for binary elastic interactions.  This is exactly how the Boltzmann $\mathcal{H}$-Theorem works \cite{CIP}; the equilibrium Maxwellian \eqref{eqMax} is proven to be the stationary state due to the conservation properties combined with the elastic collision law. 
    
{In the case of inelastic collisions for either Maxwell type o hard sphere interactions for constant rate of local energy law  \cite{BCG00,GPV04,MMR06}  or  
visoelastic particle type of interactions \cite{ALod,AlonsoLods2014}, where local energy rates depend on the local impact angle, making them an elastic interaction as the interaction is glanzing, the number of  collision invariants to be enforced is just $d+1$ polynomials. In addition, trivial stationary states are either a singular distribution or vacuum, and it has been shown that there also are  non-trivial attracting self-similar solutions that  develop power tail distributions in the self-similar framework, as computed in \cite{GT09} and references therein for an in depth discussion of the phenomena. 
In particular,  it would not correct to  use approximating schemes that enforce local or global Maxwellian behavior will eventually generate errors.
In fact, in the case of the scalar homogeneous Boltzmann for binary inelastic collisions of Maxwell type, the scheme is able to accurately compute the evolution to self similar states with power tails, by exhibiting the predicted corresponding moment growth as performed in \cite{GT09}. }\\
The conservative spectral Lagrangian has also been implemented to numerically simulate a  gas mixture system for chemically interacting gases, \cite{MHGM14} and \cite{zhang-gamba-Landau16}, were recombination terms depend of mass ratios, even if the particle-particle interaction is elastic. IN particular, while each component of the gas mixture does not conserve energy, the total system does.  The resulting conservation scheme, then, enforces the proper collision invariants for the total system by enforcing a convex combination of the thermodynamic macroscopic quantities, but not for the collision invariants of  individual components.\\
Enforcing the system to conserve total quantities by the suitable constrain minimization problem associated to initial data for the mixture will select the correct equilibrium states associated to each system component.  A proof of this statement would require to adjust the  {\em Conservation Correction Estimate} of Lemma~\ref{t1} now extended to the adequate convex combination of collision invariants corresponding to the initial data of the system, as it was computed in \cite{MHGM14} for a $2\times2$ Neon Argon gas mixture, or a $5\times5$ multi-energy level gas mixture using the classical hard sphere model, as well as in \cite{zhang-gamba-Landau16} for an electron ion plasma mixture using the Landau equation for Coulomb potentials.\\
{The paper is organized as follows. In section 2,  the preliminaries and description of the spectral method for space homogenous Boltzmann equation are presented.  In section 3, we introduce the optimization problem proving the basic estimates including spectral accuracy and consistency results in both elastic and inelastic collisions in Theorem \ref{t1}.  In sections 4, 5 and 6 we develop the existence, convergence and error estimates for the elastic interactions scheme, which heavily relies  of the analytical properties of the model for space homogeneous, monoatomic, single component elastic interacting gas for hard potentials and integrable angular cross section kernel.  Finally, in section 7 we show local stability and long time convergence of the method.  In this section we prove that, in fact, all constant in the estimates are uniform in time.  We point out that it is possible to carry out this program for the inelastic framework of viscoelastic interactions, as all the necessary analytical tools are already available in \cite{ALod,AlonsoLods2014}. }
The methodology we follow is summarized in the following steps:\medskip
\begin{enumerate}
\item  In section 4 we prove \textit{a priori} estimates for the moments and the $L^{2}_{k}$-norms of the scheme under small negative mass assumption.  The analysis involves a coupled estimate on moments and $L^{2}$-norm due to the fact that spectral methods fundamentally need the $L^{2}$-theory.  An estimate for the amount of the negative mass produced by the scheme along time is proven as well.\medskip
\item We use the \textit{a priori} estimates of section 4 to prove global existence in section 5.  The key ingredient is to keep the negative mass formation under the numerical scheme in control.  We, then, show propagation of regularity.\medskip
\item In the section 6 we develop the error estimates of the scheme using the propagation of moments and Sobolev norms provided in sections 4 and 5.  The core of the document finishes, in section 7, with a result on the local stability and exponential convergence of the scheme to the thermal equilibrium.  This last part helps to make all constants found in previous sections uniform in time.\medskip
\end{enumerate}   

Finally, some conclusion are drawn in section 8 and a useful toolbox is given in the appendix.
\section{Preliminaries}
\subsection{The Boltzmann equation and its Fourier representation}
The initial value problem associated to the space homogeneous Boltzmann transport equation modeling the statistical evolution of a single point probability distribution function $f(t,v)$ is given by 
\begin{equation}\label{singleEq}
\frac{\partial f}{\partial t} (t, v)  =  Q(f, f)(t, v)\ \mbox{in}\ (0,T]\times\mathbb{R}^d,
\end{equation}
with initial data $f(0,v)=f_0$.  The weak form of the collision integral is given by 
\begin{align}\label{singleEq1}
\int_{\mathbb{R}^d} \!\!\Q(f, f)(v)  \phi(v)  \text{d}v\! = \!\int_{\mathbb{R}^{2d}} \!\int_{\mathbb{S}^{d-1}}\!\!
f(v, t) f(w, t) [\phi(v') \!-\!  \phi(v) ] B(|u|,\hat{u}\cdot\sigma) \text{d}\sigma \text{d}w \text{d}v \, ,
\end{align}
where the corresponding velocity interaction law exchanging velocity pairs $\{v,w\}$ into post-collisional pairs $\{v',w'\}$ is given by the law
\begin{equation}\label{singleEq2}
v'= v + \frac{\beta}{2}(|u|\sigma - u)\ \ \mbox{and}\ \ w' = w - \frac{\beta}{2}(|u|\sigma - u),
\end{equation}
where $\beta\in(1/2,1]$ is the energy dissipation parameter, $u=v-w$ is the relative velocity  and $\sigma\in \mathbb{S}^{d-1}$ is the unit direction of the post collisional relative velocity $u'=v'-w'$.  The parameter $\beta$ is related to the degree of inelasticity of the interactions, with $\beta=1$ being elastic and $\beta<1$ inelastic interactions.

The collision kernel, quantifying the rate of collisions during interactions, carries important properties that are of fundamental importance for the regularity therory of the Boltzmann collisional integral. It is assumed to be
\begin{equation}\label{ck}
B(|u|, \hat{u}\cdot\sigma) = |u|^{\lambda}\, b(\hat{u}\cdot\sigma)\, , \ \ \text{with}\ \ 0\leq \lambda\leq 1\ .
\end{equation}
 The scattering angle $\theta$ is defined by $\cos\theta= \hat{u} \cdot \sigma$, where the hat stands for unitary vector.  Further,  we assume that the differential cross section kernel $b(\hat{u}\cdot\sigma)$ is integrable in $\mathbb{S}^{d-1}$, referred as the {\sl Grad cut-off assumption}  \cite{grad1969}, and it is renormalized in the sense that
\begin{align}\label{grad-cut-off}
\int_{ \mathbb{S}^{d-1} }\! b(\hat{u} \cdot \sigma) \!\text{d}\sigma &\!= \!\big|\mathbb{S}^{d-2}\big|\!\int_{_{0}}^{^{\pi}} \!\!b(\cos\theta) \sin^{d-2}\theta \text{d}\theta = \big|\mathbb{S}^{d-2}\big|\!\!\int_{_{-1}}^{^{1}} \!\!b(s) (1 - s^2)^{(d-3)/2} \text{d}s \!= \!1\,,
\end{align}
where the constant $\big|\mathbb{S}^{d-2}\big|$ denotes the Lebesgue measure of $\mathbb{S}^{d-2}$.  The parameter $\lambda$ in \eqref{ck} regulates the collision frequency and accounts for inter particle potentials occurring in the gas.  These interactions are referred to as Variable Hard Potentials (VHP) whenever $0< \lambda< 1$, Maxwell Molecules type interactions (MM) for $\lambda=0$ and Hard Spheres (HS) for $\lambda=1$.  In addition, if kernel $b$ is independent of the scattering angle we call the interactions isotropic, otherwise, we refer to them as anisotropic Variable Hard Potential interactions.

It is worth mentioning that the weak form of the collisional form \eqref{singleEq1} also takes the following  weighted double mixing  convolutional form
\begin{align}\label{singleEq3}
\int_{\mathbb{R}^d} Q(f, f)(v) \, \phi(v) \, \text{d}v =
\int_{\mathbb{R}^{2d}} 
f(v) f(v-u) \, \G(v,u) \text{d}u \text{d}v \, .
\end{align}
The weight function defined by
\begin{align}\label{singleEq4}
\G(v,u) = \int_{\mathbb{S}^{d-1}}
\big[\phi(v') -  \phi(v) \big]\, B(|u|,\hat{u}\cdot\sigma)\, \text{d}\sigma
\end{align}
depends on the test function $\phi(v)$, the  collisional kernel $B(|u|, \hat{u}\cdot\sigma)$ from \eqref{ck} and the exchange of collisions law \eqref{singleEq2}. This is actually a generic form of a Kac master equation formulation for a binary multiplicatively interactive stochastic Chapman-Kolmogorov  birth-death  rate processes, were the weight function $\G(v,u)$ encodes the detailed balance properties,  collision invariants as well as existence, regularity and decay rate dynamics to equilibrium. 

We also denote by $'\!v$ and $'\!w$ the pre-collision velocities corresponding to $v$ and $w$.  In the case of elastic collisions (i.e. $\beta=1$) the pairs $\{'\!v,'\!w\}$ and $\{v',w'\}$ agree, otherwise, extra caution is advised. 

\medskip

\noindent
{\bf Collision invariants and conservation properties.}  The collision law \eqref{singleEq2} is equivalent to the following relation between the interacting velocity pairs 
$$v+w=v'+w'\quad \text{and} \quad |v|^2+|w|^2=|v'|^2+|w'|^2 - \beta(1-\beta) B(|u|,\hat{u}\cdot\sigma).$$  In particular, when testing with the polynomials  $\varphi(v)=1, \ v_j, \ |v|^2$ in $\mathbb R^d$, yields the following conservation relations  
\begin{eqnarray}\label{col-invariants}
\frac{\text{d}}{\text{d}t} \int_{_{\R^d}}  \hspace{-0.1in} f \begin{pmatrix} 1  \\ v_j\\   |v|^2  \end{pmatrix}
\text{d}v\, =  \int_{_{\R^{2d}}}   \hspace{-0.1in} f(v_*)f(v) \int_{\mathbb{S}^{d-1}}\begin{pmatrix} 0  \\ 0 \\   -\beta(1-\beta) 
\end{pmatrix}B(|u|,\hat{u}\cdot\sigma)\text{d}\sigma \text{d}v_{*}\text{d}v\,.
\end{eqnarray}
The polynomials that make the collisional integral vanish are called collision invariants. Clearly, the elastic case when $\beta=1$, the homogeneous Boltzmann equation has $d+2$ collision invariants and corresponding conservation laws, namely mass, momentum and kinetic energy.  For the inelastic case of $\beta<1$, the number of invariants and conserved quantities is $d+1$.  

Finally, when  testing with  $\varphi(v)=\log f(v)$ yields the inequality ($\mathcal H$-Theorem holding for the elastic case)
\begin{align}\label{H-theo}
\begin{split}
\frac{\text{d}}{\text{d}t} \int_{\R^d}  &f\log f \text{d}v =      \int_{\R^d}  Q(f)\log f \text{d}v \\
&=\int_{_{\R^{2d}\times \mathbb{S}^{d-1} }}  \hspace{-0.2in} f(w)f(v) \left(\log\left(\frac{f(w')f(v')}{f(w)f(v)}\right)  + \frac{f(w')f(v')}{f(w)f(v) } -1\right) B(|u|,\hat{u}\cdot\sigma)\text{d}\sigma \text{d}w\text{d}v \\
\ \ \  &\qquad \qquad\qquad\qquad+
\int_{_{\R^{2d}\times \mathbb{S}^{d-1} }}   \hspace{-0.1in} f(w)f(v) \left(\frac1{(2\beta-1)J_{\beta}} -1\right)  B(|u|,\hat{u}\cdot\sigma)\text{d}\sigma \text{d}w \text{d}v \\
\ \ \ & \hspace{-0.3cm}\le  \int_{_{\R^{2d}\times \mathbb{S}^{d-1} }}   \hspace{-0.1in} f(w)f(v) \int_{\mathbb{S}^{d-1}}
\left(\frac1{(2\beta-1)J_{\beta}} -1\right)  B(|u|,\hat{u}\cdot\sigma)\text{d}\sigma \text{d}w\text{d}v =0 \;  \text{ iff }   \; \beta=1\, . 
\end{split}
\end{align} 
Recall the following fundamental result in elastic particle theory:

\medskip
\noindent{\bf The Boltzmann Theorem} (for $\beta=1$).  $$\int_{\R^d}  Q(f)\log f =0 \ \iff \ \log f(v)= a+ {\bf b}\cdot v - c|v|^2\,,$$ where $f\in L^1(\R^d)$ for $c>0$, where the parameters $a$, $\textbf{b}$ and $c$ are determined by the initial state moments given by the $d+2$ collision invariants.  
\\
That means, given an initial state $f_0(v)\ge 0$  for a.e.  $v\in \R^d$  and $\int_{\R^d} f_0(v)(1+|v|^2)\, dv< \infty\,.$
In the limit as $t\to+\infty$, we expect that $f(t, v)$ converges to the {\em equilibrium Maxwellian} distribution, i.e.
\begin{equation}\label{eqMax}
f(t,v)\to \mathcal{M}_0[m_0,u_0,\Theta_0](v):={m_0}(2\pi \Theta_0)^{-d/2} \, \exp\left(-\frac{|v-u_0|^2}{2 \Theta_0}\right)\,,
\end{equation}
where the density mass, momentum and energy are defined by
$$
m_0:=\int_{\R^d} f_0 (v) \,dv\,,\quad u_0:=\frac1{m_0}\int_{\R^d} f_0 (v)\, dv\,,\quad \Theta_0:=
({d\,m_0})^{-1}     \int_{\R^d}  |v-u_0|^2\, f_0(v)\,dv\,.
$$

\medskip
\noindent {\bf The Fourier formulation of the collisional form.}
One of the pivotal points in the success of the spectral numerical method for the computation of the non-linear Boltzmann equation lies in the simplicity of the representation of the collision integral in Fourier space by means of its weak form.  Indeed taking the Fourier multiplier as the test function, i.e. 
\begin{equation*}
\psi(v)=\frac{e^{-i \zeta \cdot v}}{(\sqrt{2\pi})^d}
\end{equation*}
in the weak formulation \eqref{singleEq1}, where $\mathbf{\zeta}$ is the Fourier variable, one obtains the Fourier transform of the collision integral
\begin{align*}
\widehat{Q(f,f)}(\zeta)&=\frac{1}{(\sqrt{2\pi})^d} \int_{\mathbb{R}^d} Q(f, f) e^{-i \zeta \cdot v} \text{d}v \nonumber \\
&= \frac{1}{(\sqrt{2\pi})^d}\int_{\mathbb{R}^{2d}}\int_{\mathbb{S}^{d-1}} f(v) f(w)  B(|u|,\hat{u}\cdot\sigma) \left(e^{-i\zeta \cdot v'} - e^{-i \zeta \cdot v}\right) \text{d}\sigma \text{d}w \text{d}v\, .
\end{align*}
Thus, using (\ref{ck}, \ref{singleEq3}, \ref{singleEq4}) yields
\begin{align}\label{weakQHat2}
\begin{split}
\widehat{Q(f,f)}(\mathbf{\zeta})& = \frac{1}{(\sqrt{2\pi})^d} \int_{\mathbb{R}^{2d}} f(v) f(w) \int_{\mathbb{S}^{d-1}} |u|^{\lambda} b(\hat{u}\cdot\sigma)e^{-i \zeta \cdot v}\left(e^{-i \frac{\beta}{2}\zeta \cdot (|u|\sigma - u))} - 1\right) \text{d}\sigma \text{d}w \text{d}v \\
&=\frac{1}{(\sqrt{2\pi})^d} \int_{\mathbb{R}^d} \left(  \int_{\mathbb{R}^d} f(v) f(v - u) e^{-i\zeta \cdot v} \text{d}v\right) G_{\lambda, \beta}(u,\zeta) \text{d}u \\
&=\frac{1}{(\sqrt{2\pi})^d}\int_{\mathbb{R}^d}  \widehat{f\; \tau_{-u}f}(\zeta)  \, G_{\lambda, \beta}(u,\zeta) \, \text{d}u\,, 
\end{split}
\end{align}
where the weight function $ G_{\lambda, \beta}(u,\zeta) $ is defined by the spherical integration
\begin{equation} \label{gDef1}
G_{\lambda, \beta}(u,\zeta):=|u|^{\lambda} \int_{\mathbb{S}^{d-1}} b(\hat{u}\cdot\sigma)\left(e^{-i \frac{\beta}{2}\zeta \cdot (|u|\sigma - u))} - 1\right) \text{d}\sigma\,.
\end{equation}
Note that \eqref{gDef1} is valid for both isotropic and anisotropic interactions.  In addition, the function $G_{\lambda, \beta}(u, \zeta)$ is oscillatory and trivially bounded by $|u|^{\lambda}$ due to the integrability of $b(\cdot)$ from the Grad's cut-off assumption.  
Further simplification ensues for the three dimensional isotropic case where a simple computation gives
\begin{equation}
G^{\text{iso}}(u, \zeta) =|u|^{\lambda}
\left(e^{i \frac{\beta}{2}\zeta\cdot u}\; \text{sinc}\left(\frac{\beta |u| |\zeta|}{2}\right) - 1 \right).
\label{gDef2}
\end{equation}
In addition, recalling elementary properties of the Fourier transform yields
\begin{align*}
\widehat{f\;\tau_{-u}f}(\zeta)&=\frac{1}{(\sqrt{2\pi})^d}\;\hat{f}\ast\widehat{\tau_{-u}f}(\zeta) =\frac{1}{(\sqrt{2\pi})^d}\int_{\mathbb{R}^d}\hat{f}(\zeta-\xi)\widehat{\tau_{-u}f}(\xi)\text{d}\xi\nonumber\\
&=\frac{1}{(\sqrt{2\pi})^d}\int_{\mathbb{R}^d}\hat{f}(\zeta-\xi)\hat{f}(\xi)e^{-i\xi\cdot u}\text{d}\xi\,.
\end{align*}
Hence, using this last identity into  \eqref{weakQHat2},  we finally obtain the following  structure in Fourier space 
\begin{equation}
\widehat{Q(f,f)}(\zeta)=\frac{1}{(2\pi)^d} \int_{\mathbb{R}^d} \hat{f}(\zeta - \xi) \hat{f}(\xi) \widehat{G_{\lambda, \beta}}(\xi, \zeta) \text{d}\xi\,,
\label{qHatN2}
\end{equation}
where
\begin{equation}\label{hatG}
\widehat{G_{\lambda, \beta}}(\xi, \zeta) = \int_{\mathbb{R}^d} G_{\lambda, \beta}(u, \zeta) e^{-i \xi \cdot u} \text{d}u\,.
\end{equation}
That is,  the Fourier transform of the collision operator $\widehat{Q(f,f)}(\zeta)$ is a weighted convolution of the inputs in Fourier space with weight $\widehat{G_{\lambda, \beta}}(\xi, \zeta)$.

As an example, we compute the weight for the isotropic case in three dimensions.  Assume that $f$ has support in the ball or radius $\sqrt{3}L$, hence, the domain of integration for the relative velocity is the ball of radius $2\sqrt{3}L$.  Using polar coordinates $u=r\omega$,
\begin{align}\label{fourier-kernel1}
\begin{split}
\widehat{G^{\text{iso}}}(\xi, \zeta) &= \int^{\infty}_{0} \int_{\mathbb{S}^2} r^2 G^{\text{iso}}(r\omega, \zeta) e^{-i r \xi \cdot \omega} \text{d}\omega \text{d}r \\
&= 4 \int_0^{2\sqrt{3}L} r^{\lambda + 2}\left(\text{sinc}\left(\frac{r \beta |\zeta|}{2}\right) \text{sinc}\left(r |\frac{\beta}{2} \zeta-\xi|\right) - \text{sinc}\left(r |\xi|\right)\right)\text{d}r.
\end{split}
\end{align}
A point worth noting here is that the numerical calculation of expression \eqref{qHatN2} results in $O(N^{2d})$ number of operations, where $N$ is the number of discretization in each velocity component (i.e. $N$ counts the total number of Fourier modes for each  $d$-dimensional velocity) space.  However it may be possible to reduce the number of operations to $O(N^{2d-1}\text{log}N)$ for any anisotropic kernel and any initial state.  Due to the oscillatory nature of the weight function \eqref{fourier-kernel1} even in the simple case of 3 dimensions for the hard sphere case, when $b(\hat{u}\cdot\sigma)= 4\pi$, such  calculation can not be accomplished by $N\text{log}N$ if the initial state is far from a Maxwellian state or has an initial discontinuity, as claimed in \cite{filbetRusso1}.

\medskip
\noindent {\bf Notation and spaces.}  Before continuing with the discussion, we recall the definition of the Lebesgue's spaces $L^{p}_{k}(\Omega)$ and the Hilbert spaces $H^{\alpha}_{k}(\Omega)$.  These spaces will be used along the manuscript.  The set $\Omega$ could be any measurable set in the case of the $L^p_{k}$ spaces or any open set in the case of the $H^{\alpha}_{k}$ spaces, however, for our present purpose $\Omega \text{ is either } (-L,L)^{d} \text{ or } \mathbb{R}^{d}$ most of the time.
\begin{align*}
L^{p}_{k}(\Omega) : &= \Big\{ f \,:\,  \|f\|_{L^{p}_{k}(\Omega)}:=\Big(\int_{\Omega} \big| f(v)\langle v \rangle^{\lambda k} \big|^{p}\text{d}v\Big)^{\frac{1}{p}}<\infty\Big\}\,,\quad\text{ with } \; p\in[1,\infty)\,,\; k\in\mathbb{R}\,,\\
H^{\alpha}_{k}(\Omega) :&= \Big\{ f\; : \; \|f\|_{H^{\alpha}_{k}(\Omega)}:= \Big(\sum_{\beta\leq\alpha} \|D^{\beta}f\|^{2}_{L^{2}_{k}(\Omega)}\Big)^{\frac{1}{2}}<\infty\Big\}\,,\quad\text{ with }\; \alpha\in\mathbb{N}^{d}\,,\;k\in\mathbb{R}\,,
\end{align*}
where $\langle v \rangle := \sqrt{1+|v|^{2}}$.  The standard definition is used for the case $p=\infty$,
\begin{equation*}
L^{\infty}_{k}(\Omega) : = \Big\{ f \,: \,  \|f\|_{L^{\infty}_{k}(\Omega)}:=\text{esssup}\big|f(v)\langle v \rangle^{\lambda k}\big|<\infty\Big\}\,,\quad\text{ with }\; k\in\mathbb{R}\,.
\end{equation*}
It will be commonly used the following shorthand to ease notation when the domain $\Omega$ is clear from the context
\begin{equation*}
\|\cdot\|_{L^p_{k}(\Omega)} = \|\cdot\|_{L^p_{k}}\,,
\end{equation*}
and the subindex $k$ may be omitted in the norms for the classical spaces $L^{p}$ and $H^{\alpha}$.  In addition, following the notation and language of the classical analysis of the Boltzmann equation, and including the fact that numerical solutions are not nonnegative in general, the {\sl moments of a function} $f$ are denoted by
\begin{equation}\label{defmom}
m_{k}(f):=\int_{\mathbb{R}^{d}}\big|f(v)\big|\,|v|^{\lambda k}\,\text{d}v.
\end{equation}
\subsection{Choosing a computational cut-off domain}\label{sec-cut-off}    In order to make a  good approximation to the probability density $f(t,v)$, defined for all $v\in\Omega:=\mathbb{R}^d$,  solution of the dynamical homogeneous Boltzmann equation initial value problem, we need to solve the proposed  spectral numerical scheme in a computational domain given by the bounded  set  $\Omega_L\in \mathbb{R}^d$  for a  sufficiently large $N$ Fourier modes, as it will be defined next in the beginning of section~\ref{sec:consIsomoment}.
In particular, the global collision operator $Q(f,f)(t,v)$, defined weakly in \eqref{singleEq1}, needs to be approximated in such computational domain $\Omega_L$, that will be carefully chosen below for the specific task of solving the homogeneous Boltzmann equation, for a particularly chosen initial data being a probability density with a prescribed finite and positive initial mass and kinetic energy (i.e. the choice of the computational domain depends on the initial data, as it will be carefully explained).

It will be clear, after the discussion of the approximating numerical scheme for the space  homogeneous Boltzmann initial value problem and Theorem \ref{CT}, that there are two sources of error: one due to the mode truncation and other due to domain truncation.  Both are always present due to the global nature of the equation.  
{The key point in the choice of the  computational domain $\Omega_L=(-L,L)^d$ is that the time dynamics of the analytical  solution remains bounded  and decays with Maxwellians tails if initially so following the result in \cite{GPV08}. In particular, it is possible to choose a large enough cut-off length $L$, depending on the initial data with a Gaussian decay rate, whose approximating $g_0(v)$ satisfies condition \eqref{gas0}, and the  supp$\{g_0(v)\} \subset \Omega_{aL}$, for $0<a\ll1$. As a consequence the periodization of the domain is not necessary, since the analytical result from  \cite{GPV08} combined with the conservation algorithm, secures that numerical solution will take values very close to zero (i.e. below machine 	accuracy)  near the boundary $\partial\Omega_L$.  That means it is enough to choose  $\Omega_{L}$ such that, at least, most of the mass and energy of the true Boltzmann solution $f$ will be contained in it during the simulation time.  }

One possible strategy for choosing the size of $\Omega_L$   is as follows:  assume, without loss of generality, a bounded initial datum $f_0$ with compact support and having zero momentum $\int f_0\,v \,dv= 0$.  Then, 
\begin{equation}\label{contfo}
f_0(v) \leq \frac{C_0\,m_0}{ ({2\pi \Theta_{0}})^{d/2} } \,e^{-\frac{r_0 |v|^{2} }{ 2 \Theta_{0} } }\,,
\end{equation}
where $m_{0}:=\int f_0 $ is the initial mass, $\Theta_{0}:=\int f_{0}|v|^{2}$ is the initial temperature, and $r_0\in(0,1]$ and $C_0\geq1$ are the stretching and dilating constants.  Since the Boltzmann flow propagates Gaussian weighted Lebesgue norms, refer to \cite{desvillettes93, wennberg97, Bobylev97, BGP04, MV04, GPV08, AG, AG08, ACGM} for this and more related theoretical facts on the equation.  Thus, there are some uniform in time constants $r$ and $c$ depending on the moments of the initial data us much of the potential rates $\lambda$ and the angular part $b$ of the collision kernel, namely  $r:=r(f_0,\lambda,b)\in(0,r_0]$ and $C:=C(f_0,\lambda,b)\geq C_0\geq1$ such that
\begin{equation}\label{GaussianControl}
f(t,v) \leq \frac{C\,m_0}{ ({2\pi \Theta_{0}})^{d/2} }\,e^{-\frac{r |v|^{2} }{ 2 \Theta_{0} } }=: M(f_0,C,r)\,,\quad t >0\,.
\end{equation}
Now, choose a small quantity $\delta\ll 1$ being the mass proportion of the tails associated to the  Maxwellian $M(f_0,C,r)$ from \eqref{GaussianControl} that uniformly  controls the solution $f(t,v)$ as in \cite{GPV08}.  That is, 
\begin{align*}
\int_{\Omega^{c}_{L}}f(t,v)\langle v \rangle^{2} \text{d}v \leq \int_{\Omega^{c}_{L}}M(f_0,C,r)\langle v \rangle^{2} \text{d}v\leq \delta \int_{\Omega_{L}}f_0(v)\langle v \rangle^{2} \text{d}v=\delta(m_0+\Theta_0)\,.
\end{align*}
Therefore, the parameter $\delta$,  for the solution of the Boltzmann equation , thanks to the $L^{\infty}$ control of the solution in \cite{ GPV08},  is interpreted as a domain cut-off error tolerance that \emph{remains uniform in time} and solely depends on the approximated  initial state, say, $0\le g_0(v)$  on the chosen $\Omega_L$ so that  the magnitude of $\delta(m_0+\Theta_0)$ is well below machine accuracy.  Clearly,  the mass proportion $\delta$ must be small enough for  $\text{supp}(g_0)\subset\subset \Omega_{L}$.  Equivalently, one needs to choose the size of $L$, (or  the measure of the computational  domain $\Omega_L$), such that 
\begin{equation}\label{delta_error}
\frac{\int_{\Omega^{c}_{L}}M(f_0, C, r)\langle v \rangle^{2} \text{d}v}{m_0+\Theta_0}\leq \delta\approx 0\,.
\end{equation}
In order to minimize the computational effort,  one should pick the smallest of such domains, that is $\Omega_{L}$, such that\\
\begin{equation}\label{delta_error1}
 \text{for a fixed }\ a\ll1\, , \ \ \text{supp}(f_0)\subset \Omega_{aL}\ \ \text{and}\ 
\end{equation}
 that $\Omega^{c}_{aL}$  \eqref{delta_error}  in the sense  that the numerically approximated initial probability density vanishes in a neighborhood on boundary $\Omega_L$, to beyond several order down of machine accuracy.
  In addition, under this conditions we invoke the Restriction operators in  Sobolev spaces argument, such us \eqref{extension_estimate} in the subsection below, which allow us to make rigorous  semi discrete error estimates in Sobolev norms with respect to the solution $f(t,\cdot)\in \mathbb R^d$ of the homogeneous Boltzmann Cauchy problem (\ref{singleEq} - \ref{grad-cut-off}) under consideration. 
  
Finally, for such estimate \eqref{delta_error1} to be of practical use one would need to compute the precise value of the constants $C$ and $r$.  As a general matter, these constants comes from available analytical estimates, although quantitative, are likely far from optimal.  The result is that the choice \eqref{delta_error1} most of the times overestimate the size of the simulation domain.  It is reasonable then, for practical purposes, to simply set $r_o = r = 1$ and choose $C=C_o\geq1$ as the smallest constant satisfying \eqref{contfo} (which always exists for any compactly supported and bounded $f_0$).  That this choice of parameters is natural, it is noted from the fact that 
\begin{equation*}
\max\Big\{g_0\,,\, f_{\infty}:=\frac{m_0}{ (2\pi \Theta_{0})^{d/2}  }\,e^{-\frac{|v|^{2} }{ 2 \Theta_{0} } } \Big\} \leq M(f_0,C,1)\,,
\end{equation*}
with equality if and only if $f_0$ is the equilibrium Maxwellian as in \eqref{eqMax} (in such a case $C=1$). 

This propagation property secures a stable numerical simulation of the Boltzmann equation, provided the numerical preservation of the conservation laws or corresponding collision invariants.  It also secures, as we will see, the convergence of the numerical scheme to the analytic solution of the initial value problem and the correct long time evolution of such numerical approximation.  In this way, the numerical scheme will converge to the equilibrium Maxwellian as defined in \eqref{eqMax}.

We note that the discussion of this section is fairly independent of the choice computational scheme and applies to new approaches such as the recently developed in \cite{DGBTE-Zhang-Gamba14} for a Galerkin approach to the computation of the space homogeneous  Boltzmann equation for binary interactions.

\subsection{Fourier series,  projections.}\label{ProExt}
In the implementation of a spectral method the single most important analytical tool is the Fourier transform.  Thus, for $f\in L^{1}(U)$ with $U$ open in $\mathbb{R}^{d}$, the Fourier transform is defined by
\begin{equation}\label{e1p}
\widehat{f}(\zeta):=\frac{1}{(\sqrt{2\pi})^d}\int_{U}f(v)e^{-i\zeta\cdot v}\text{d}v\,.
\end{equation}
The Fourier transform allow us to express the Fourier series in a rather simple and convenient way.  Indeed, fixing a domain of work $\Omega_L:=(-L,L)^d$ for $L>0$, recall that for any $f\in L^2(\Omega_L)$ one can use the \textit{Fourier series} to express $f$ as
\begin{equation}\label{e1}
f(v) = \frac{1}{(2L)^d}\sum_{k\in\mathbb{Z}^d}\widehat{f}(\zeta_k)e^{i\zeta_{k}\cdot v},
\end{equation}
where $\zeta_k:=\frac{2\pi k}{L}$ are the spectral modes and $\widehat{f}(\zeta_k)$ is the Fourier transform of $f$ evaluated in such modes.\\

The mode projection operator is defined as $\Pi^N_{L}: L^2(\Omega_L) \rightarrow L^2(\Omega_L)$ as
\begin{equation}\label{e2}
\left(\Pi^{N}_{L} f\right)(v): = \left(\frac{1}{(2L)^d}\sum_{|k|\leq N} \widehat{f}(\zeta_k) e^{i \zeta_k \cdot v}\right),
\end{equation}
in other words, it is the \textit{orthogonal projection} on the ``first $N^{d}$'' basis elements.  Also observe that for any integer $\alpha$ the derivative operator commutes with the projection operator in $H^{\alpha}_{o}(\Omega_{L})$.  Indeed, note the identity for any $f\in H^{\alpha}_{o}(\Omega_{L})$,
\begin{align}\label{derivative_projection}
\begin{split}
\partial^{\alpha}\left(\Pi^{N}_{L} f\right)(v)&=\frac{1}{(2L)^d}\sum_{|k|\leq N} (i\zeta_{k})^{\alpha}\widehat{f}(\zeta_k) e^{i \zeta_k \cdot v}\\
&=\frac{1}{(2L)^d}\sum_{|k|\leq N} \widehat{\partial^{\alpha}f}(\zeta_k) e^{i \zeta_k \cdot v}=\left(\Pi^{N}_{L}\partial^{\alpha} f\right)(v)\,.
\end{split}
\end{align}
Recall that Parseval's theorem readily shows 
\begin{enumerate}
\item  $\big\|\Pi^{N}_{L}f\big\|_{L^{2}(\Omega_L)} \leq \big\| f  \big\|_{L^{2}(\Omega_L)} $ for any $N$, and with equality for $N=\infty$.  Also,
\item  $\big\|(\textbf{1} -  \Pi^{N}_{L})f\big\|_{L^{2}(\Omega_L)} \searrow 0$ as $N\rightarrow \infty$.
\end{enumerate}

\medskip
\noindent\textbf{Extension operator for Sobolev regularity propagation.}  The restriction of the original problem posed in $\mathbb{R}^{d}$ to an approximation problem posed in a bounded domain $\Omega_{L}$ introduce some technical issues at the boundary generated by the truncation.  We deal with this problem introducing the following scaled cut-off function defined by
\begin{align}\label{cutoff}
\begin{split}
&\chi(v):=\chi_{L}(v)=\phi(v/L),  \ \text{with} \ \phi \ \text{a smooth nonnegative function,}\\
&\text{such that}\ \ \text{supp}\{\phi\}\subset 0.99[-1,1]^{d} \, ,\ \ \text{with} \ \ \phi\equiv 1\ \text{in}\  0.95[-1,1]^{d} \,.
\end{split}
\end{align} 
The cut-off function $\chi$ allows for the scheme propagation of higher Sobolev regularity estimates (it is not necessary for $L^{2}$-convergence) as it smooths out the boundary without incurring in meaningful error (provided $\Omega_{L}$ was well chosen as previously discussed in subsection~\ref{sec-cut-off}
).  Using the product rule, it follows that
\begin{equation}\label{extension_estimate}
\|\chi g\|_{H^{\alpha}(\Omega_{L})}\leq \| \chi \|_{\mathcal{C}^{\alpha}}\|g\|_{H^{\alpha}(\Omega_{L})} \leq C\,\|g\|_{H^{\alpha}(\Omega_{L})}\,,
\end{equation}
for any function $g\in H^{\alpha}(\Omega_{L})$.  Note also that the constant $C:=C_{\chi}$, that controls the operator norm, can be taken independent of $L\geq1$.  It is important to observe that the function $\chi g$ vanishes in near  $\partial\Omega_L$, and so  it can be considered as a function in $H^{\alpha}(\mathbb{R}^{d})$ after using the extension operator who assigns the zero value to any point in the  complement of  $\Omega_L$, that is  $E(\chi g) = 0 $ in $\mathbb R^d  \backslash\Omega_L$. In addition  the Sobolev norms of such extension coincide with those  of the restricted $\chi g $, which takes values in compactly supported set in   $\Omega_L$ that vanishes in a neighborhood of the boundary $\partial\Omega_L$, relative to  $\Omega_L$. That precisely means
\begin{equation}\label{extension-norm}
\|E(\chi g)\|_{H^{\alpha}(\mathbb R^d)} = \|\chi g\|_{H^{\alpha}(\Omega_{L})}\,,\qquad g\in H^{\alpha}(\Omega_{L})\,. 
\end{equation}
Therefore our choice of the  the cut-off function $\chi$ enable us to implement an extension operator by null values to all space (for a full discussion of extension operators see \cite{SE}).   These properties will be useful when comparing the continuous and semi-discrete solutions, which lie in different domains.  Furthermore, in the case of $L^{2}$-convergence one can simply take $\chi\equiv1$.

\begin{rem}\label{why-not-period}
A common technique found in the literature to deal with the domain truncation is by periodization of the initial data.  Why we do not periodized the initial data, but rather use the extension method on Sobolev spaces for functions that vanish in a given bounded domain?  The answer is that the approximated data and solution in our problem are probability densities that rapidly decay at large values of velocities $v$.  In the particular case of the homogeneous Boltzmann equation approximation for hard potentials and angular integrable collision cross section, as it is developed in this theory, the  crucial issue is to choose the  computational domain large enough, depending on the initial data as previously discussed.  Under this choice, the cut-off function $\chi$ effectively implements an extension at the cost of a negligible error, as it will be show to be of the order $O(1/L^{\lambda k})$ with $k>0$ depending on the number of moments of the initial data.  
\end{rem} 
\begin{rem}\label{rem2.2} 
In this deterministic approach, as much as with Montecarlo methods like the Bird scheme~\cite{bird}, the  $x$-space inhomogeneous Hamiltonian transport for non-linear collisional forms are performed by time operator splitting algorithms. That means, depending on the problem, the computational $v$-domain $\Omega_L$  can be updated with respect to the characteristic flow associated to  underlying Hamiltonian dynamics.
\end{rem}


\section{Spectral conservation method}\label{sec:consIsomoment}
 We first introduce a formal  analytical  viewpoint needed to study  the convergence, stability and error estimates for the semi-discrete solution associated with  the spectral method derived in \cite{GT09}.
 
After the cut-off domain $\Omega_L$ has been fixed, we applied the projection operator \eqref{e2} to
 both sides of equation \eqref{singleEq} to arrive to
\begin{equation}\label{fourierBTE00}
\frac{\partial \Pi^{N}_{L} f}{\partial t} (t,v)  =  \Pi^{N}_{L} Q(f, f)(t,v)\,,  \; \mbox{ in }\ (0,T]\times\Omega_L\,.
\end{equation}
Then, it is reasonable to expect that for such a domain $\Omega_L$ and for sufficiently large number of modes $N$ the approximation
\begin{equation}\label{fourierBTE0}
\Pi^{N}_{L} Q(f, f)\sim \Pi^{N}_{L} Q(\Pi^{N}_{L} f,\Pi^{N}_{L} f)\,,  \; \mbox{ in }\ (0,T]\times\Omega_L
\end{equation}
will be valid. \\ 
Next, there are two issues worth noting: (1) for functions supported in $\Omega_{L}$ the gain operator $Q^{+}$ is supported in $\Omega_{\sqrt{2}L}$, thus, we will consider it, for simplicity, as a function in $\Omega_{2L}\, $, and (2) the operator $Q^{-}$ can be exactly computed with a small computational effort since it is a multiplication operator with a standard convolution.  As a consequence,  one is lead to consider the scheme
\begin{equation}\label{fourierBTE-0}
\begin{split}
\frac{\partial g}{\partial t} (t,v) &= \Pi^{N}_{2L}Q^{+}(\chi g,\chi g)(t,v) - Q^{-}(g,\chi g)(t,v)\\
&=:Q_{u}(g,g)(t,v), \ \ \mbox{ in } (0,T]\times\Omega_L\,, \\
g_0(v):&=g^N_0=\Pi^{N}_{L}f_0(v) \quad \mbox{initial data},
\end{split}
\end{equation}
and expect that it should be a good approximation to $\Pi^{N}_{L} f$.  Here, $Q_{u}$ stands for the unconserved collision operator.  In other words, we define the numerical solution to be $g_{N}:=g$ and expect to show that this finite mode solution will be a good approximation to the solution of the Boltzmann problem in the cut-off domain, that is $g\approx f$ in $\Omega_L\,$, provided the number of modes $N$ used is sufficiently large.  Classical spectral accuracy theorems would guarantee such approximation, yet, fixing the number of Fourier nodes to say $N^*$ would strip the conservation properties, as $Q_u$ does not preserve the $d+2$ collision invariants after each time step, and that generates a source of cumulative error that heavily constrain the meaningful simulation time of the scheme.\\
This problem was overcame in \cite {GT09} with the conservative spectral scheme we are now analysing. They introduce a conservation correction by solving a Lagrangian constrained minimization problem each time step (with $O(N)$ in computational complexity), where the objective function to be minimize is the $L^2(\Omega_L)$-distance from the unconseved $Q_u$ to the minimizer $X^*=:Q_c$ subject to the constrain of preserving the $d+2$ collision invariants. To be more precise, the following problem is computationally solved in \cite{GT09}:

\medskip
\noindent\textit{Minimization Elastic Problem (E):} Consider the Banach space
\begin{equation}\label{elasticLagrangeProblem0}
\mathcal{B}^e=\left\{X\in L^{2}(\Omega_L): \int_{\Omega_L} X=\int_{\Omega_L} Xv=\int _{\Omega_L} X|v|^{2}=0\right\},
\end{equation}
and the minimization problem
\begin{equation}\label{elasticLagrangeProblem}
X^*:=\min_{X\in\mathcal{B}^e } \, \mathcal{A}^e(X) := \min_{X\in\mathcal{B}^e } \int_{\Omega_L} \big(Q_u(f,f)(v) - X\big)^2 \text{d}v.
\end{equation}
The solution of this problem applied to our semi discrete framework will be addressed in the next subsection~\ref{subsec:consDIT}. 
It can be solved by an algorithm, describe below in (\ref{sdcm1} - \ref{min-problem}),  that delivers a unique explicit algorithm discrete vector form \begin{equation}\label{min_c}
Q_c(f,f):= X^*\in N^d \;, 
\end{equation}  
associated to any discretization of $f$ on $N^d$ Fourier modes, where the constrain in 
\eqref{elasticLagrangeProblem0} is given by the linear equation $\textbf{C}^e Q_c \ = \ \textbf{a}^e$, where the vector $ \textbf{a}^e =0 \in N^{d+2}$ for the elastic problem or $ \textbf{a}^e =0 \in N^{d+1}$ for the inelastic one. The matrix $\textbf{C}^e$ is explicitly pre-computed depending on the quadrature rule used to compute the integrals associated to the collision invariants.
 
\medskip
 
In the following sections we intent to prove this formalism under reasonable assumptions.  In fact, we study a modification of this problem, namely, the convergence towards $f$ of the solution $g$ of the problem
\begin{equation}\label{fourierBTE}
\begin{split}
\frac{\partial g}{\partial t} (t,v) &= Q_{c}(g,g)(t,v), \ \ \mbox{ in } (0,T]\times\Omega_L\,, \\
g_0(v):&=g^N_0=\Pi^{N}_{L}f_0(v) \ \ \mbox{initial data}\,,
\end{split}
\end{equation}
with   $Q_c(f,f)$ the solution of the Lagrangian constrained problem (\ref{elasticLagrangeProblem}, \ref{min_c}), and the initial data 
 $g_0$ satisfies the following condition
\begin{equation}\label{gas0}
 \frac{\int_{\{g_0<0\}}|g_0(t,v)|\langle v \rangle^{2}\text{d}v}{\int_{\{g\geq0\}}g_0(t,v) \langle v \rangle^{2}\text{d}v}\ \leq\ \epsilon,\quad \text{and}\quad \|g_0\|_{L^{2}(\Omega_L)}<\infty\,,
\end{equation}
for some fixed $0<\epsilon\le 1/4$,  where the operator $Q_c(g,g)$ is defined as the  $L^2(\Omega_L)$-closest function to $Q_{u}(g,g)$ having null mass, momentum and energy.  
\smallskip

We summarize the main results on convergence, error estimates and asymptotic behavior in the  the following theorem, whose rigorous proof  is developed in the rest of the manuscript.  As mentioned in the introduction, the following theorem is proved for the classical elastic model $\beta=1$.  A rigorous proof for the inelastic model can be done, at least, for some special regimes such as the viscoelastic particle model \cite{ALod,AlonsoLods2014} with analog  arguments.  Additional considerations about self-similar scaling are needed to obtain sharp longtime behavior associated to the model, which will be properly addressed by the authors in an incoming manuscript.

\begin{thm}[\textbf{Error estimates and convergence to Maxwellian equilibrium}]\label{CT}
Fix a nonnegative initial data  $f_0\in L^{1}_{k}\cap L^{2}(\mathbb{R}^{d})$ with $k\geq k_{*}(f_{0})\geq2$, and let $f\geq0$ be the solution of the Boltzmann equation \eqref{singleEq} with \eqref{grad-cut-off}.  Then, there exist a cut-off domain $L_{0}(f_0)>0$ and a number of modes $N_0:=N(L_0,f_0)>0$ such that \medskip
\begin{itemize}
\item [1.] {\bf Semi-discrete existence and uniqueness: }\  Taking $g_0=\Pi^{N}_{L}f_0$, the semi-discrete problem \eqref{fourierBTE} has a unique solution $g\in\mathcal{C}(0,T; L^{1}_{k}\cap L^{2}(\Omega_{L}))$ for any $T>0$, $L\geq L_{0}$\,, $N\geq N_0$\,.
\medskip
\item [2.]  {\bf $L^2_{k'}$-error estimates:}\ Taking $f_0\in L^{1}_{2}\cap L^{2}_{k}(\mathbb{R}^{d})$, $k''\geq0$, $k_{*}(f_{0}) \leq k'\leq k-1-\frac{d^{+}}{2\lambda}-k''$, then
\begin{align*}
\hspace{0.5cm}&\sup_{t\geq0}\|f - g \|_{L^{2}_{k'}(\Omega_L)} \leq C(f_0)\big(\|f_{0}-g_{0}\|_{L^{2}_{k'}(\Omega_L)} \\
&\hspace{1cm}+ O(L^{\lambda k'}/N^{(d-1)/2}) + O(1/L^{d/2+\lambda k''})\big)^{\frac{1}{1+\theta}}\,,\quad L\geq L_{0}\,,\; N\geq N_0\,.
\end{align*}

\item [3.]   {\bf $H^{\alpha}_{k'}$-error estimates:}\  For the smooth case, taking $f_0\in L^{1}_{2}\cap H^{\alpha_0}_k(\mathbb{R}^{d})$, $k''\geq0$, $k_{*}(f_{0}) \leq k'\leq k - 1 - \alpha/2 - \frac{d^{+}}{2\lambda} - k''$, it follows for any $0\leq\alpha\leq\alpha_0$
\begin{align*}
&\sup_{t\geq0}   \|f-g\|_{H^{\alpha}_{k'}(\Omega_L)} \leq C(f_0)\big(
 \|f_{0}-g_{0}\|_{H^{\alpha}_{k'+\alpha/2}(\Omega_L)} \\
& + O(L^{\lambda (k'+\alpha/2) + \alpha_0}/N^{(d-1)/2 + \alpha_0 }) + O(1/L^{d/2+\lambda k''})\big)^{\frac{1}{1+\theta}}\,,\quad L\geq L_{0}\,,\;N\geq N_0\,.\medskip
\end{align*}
In all cases $k, k' \geq k_{*}(f_{0})\geq 2$ where $k_{*}(f_{0})$ is a threshold required that only depends on $f_0$.  Also, the constant $C(f_0):=C(k',\alpha,f_0)$ in items 2. and 3. depend on $f_0$ by means of its initial regularity, and the constant $\theta:=\theta(k',\alpha)>0$. 

\medskip

\item [4.]   {\bf Convergence to the equilibrium Maxwellian:}\ For every $\delta>0$ there exist a simulation time 
$T(\delta) \sim \nu^{-1}\ln\big( \|f_{0}\|_{H^{\alpha}(\mathbb{R}^{d})}/\delta\big)$ such that for any $\alpha\leq\alpha_0$
\begin{equation*}
\sup_{ t \geq T(\delta) }   \|g - \mathcal{M}_0  \|_{H^{\alpha}(\Omega_L)}  \leq \delta\,, \quad L\geq L_0\,,\; N\geq N_0\,,
\end{equation*}
where $\nu>0$ is the spectral gap of the linearized Boltzmann operator, and $\mathcal{M}_0$ is the equilibrium Maxwellian \eqref{eqMax} having the same mass, momentum and kinetic energy of the initial datum $f_0$.
\end{itemize}
\end{thm} 
\noindent
The proof of these statements in Theorem~\ref{CT} is done in the next four sections.   Before starting with the details of such proof, we introduce the shorthand notation 
\begin{equation}\label{O_r}
O_{r}:=O(L^{-r})\,, \quad  \ r>0\, , 
\end{equation}
which will be extensibly used throughout the manuscript.

\subsection{Conservation Method - An Extended Isoperimetric problem}
\n In the course of this section we fix $f\in L^{2}(\Omega_L)$.  Due to the truncation of the velocity domain the unconserved discrete operator $Q_{u}\in N^d$ defined for $N^d$ Fourier modes, as a function in $\Omega_{L}$, does not preserve mass, momentum and energy.  Such conservation property is at the heart of the kinetic theory of the Boltzmann equation, thus, it is desirable for a numerical solution to possess it.  In order to achieve this, we enforce these moment conservation artificially by imposing them as constraints in a optimization problem.

\smallskip

Hence,  we first focus on the general form of solution of the minimization problem (\ref{elasticLagrangeProblem0}, \ref{elasticLagrangeProblem}), whose proof   is presented next.
\begin{lem}[Elastic Lagrange Estimate]\label{l1}
The problem \eqref{elasticLagrangeProblem} has a unique minimizer given by
\begin{equation}\label{lagrange_ela}
Q_c(f,f)(v) := X^{\star}= Q_u(f,f)(v) - \frac{1}{2} \Big(\gamma_1 + \sum_{j=1}^d \gamma_{j+1} v_{j} + \gamma_{d+2} |v|^2\Big), 
\end{equation}
where $\gamma_j$, for $1\leq j \leq d+2$, are Lagrange multipliers associated with the elastic optimization problem.  They are given by
\begin{align}\label{Lconstrains}
\begin{split}
\gamma_1 &= O_{d} \rho_u + O_{d+2} e_u \, , \\
\gamma_{j+1} &= O_{d+2} \mu^j_u \, , \quad j = 1, 2,\cdots,d, \\
\gamma_{d+2} &= O_{d+2} \rho_u + O_{d+4} e_u \,.
\end{split}
\end{align}
with $O_{r}$ defined in  \eqref{O_r} and the parameters $\rho_u, e_u, \mu_u^j$ are the numerical moments of the unconserved numerical collision operator, defined below in \eqref{constraintEqns}.  The minimized objective function can be estimated by
\begin{align}\label{lagrangeEstimate}
\begin{split}
\mathcal{A}^e&\big(Q_c(f,f)(v)\big) =  \big\| Q_u(f,f) - Q_c(f,f)(v)  \big\|^2_{L^2(\Omega_L)} \\
&\leq C(d) \Big( 2\gamma^2_1 L^d + \sum_{j=1}^d \gamma^2_{j+1}L^{d+2} + \gamma^2_{d+2} L^{d+4} \Big) \leq \frac{C(d)}{L^d} \, \Big( \rho^2_u + \frac{e^2_u}{L^{d+1}}  + \sum_{j=2}^{d+1} \mu^2_{j}  \, \Big)\,.
 \end{split}
\end{align}
In the particular case of dimension $d=3$  the estimate becomes
\begin{align}\label{lagrangeEstimate-d3}
\begin{split}
\big\| Q_u(f,f) - &Q_c(f,f) \big\|^2_{L^2(\Omega_L)} 
\\
&= 2\gamma_1^2 L^3  + \tfrac{2}{3} \sum_{j=2}^{4}\gamma_j^2 L^{5} + 4 \gamma_1\gamma_d L^{5} + \tfrac{38}{15} \gamma_{5}^2\,  L^{7}\le\frac{C}{L^3} \, \Big( \rho^2_u + \frac{e^2_u}{L^4}  + \sum_{j=2}^{4} \mu^2_{j}  \, \Big).
\end{split}
\end{align}
\end{lem}
\begin{proof}
From calculus of variations when the objective function is an integral equation and the constraints are integrals, the optimization problem can be solved by forming the Lagrangian functional and finding its critical points.  Set
\begin{align*}
\psi_1(X):&= \int_{\Omega_L} X(v) \text{d}v\,,\\
\psi_{j+1}(X):&= \int_{\Omega_L} v_j X(v) \text{d}v\,, \quad \forall j = 1, 2, \cdots , d,\\
\psi_{d+2}(X):&= \int_{\Omega_L} |v|^2 X(v) \text{d}v\,,
\end{align*}
and define
\begin{align*}
\mathcal{H}(X,X', \boldsymbol{\gamma}) := \mathcal{A}^e(X) + \sum_{i = 1}^{d+2} \gamma_i \psi_i(X) =\int_{\Omega_L} h(v,X, X', \boldsymbol{\gamma}) \text{d}v.
\end{align*}
We introduced
\begin{equation*}
h(v, X, X', \boldsymbol{\gamma}):=\big(Q_u(f,f)(v) - X(v)\big)^2 + \gamma_1 X(v) + \sum_{j=1}^d \gamma_{j+1} v_j X(v) + \gamma_{d+2} |v|^2 X(v).
\end{equation*}
In order to find the critical points one needs to compute $D_{X}\mathcal{H}$ and $D_{\gamma_j}\mathcal{H}$.  The derivatives $D_{\gamma_j}\mathcal{H}$ just retrieves the constraint integrals.  For multiple independent variables $v_j$ and a single dependent function $X(v)$ the Euler-Lagrange equations are
\begin{equation*}
D_2 h(v, X,  X', \boldsymbol{\gamma})=\sum_{j = 1}^{d} \frac{\partial D_3h}{\partial v_j} (v, X, X', \boldsymbol{\gamma})=0\,.
\end{equation*}
We used the fact that $h$ is independent of $X'$.  This gives the following equation for the conservation correction in terms of the Lagrange multipliers
\begin{align}\label{criticalQ}
\begin{split}
2\big(X(v) - Q_u(f,f)(v)\big) + &\gamma_1 + \sum_{j=1}^d \gamma_{j+1} v_{j} + \gamma_{d+2} |v|^2 = 0,\\
\mbox{ and therefore},\ \ Q_c(f,f)(v) = &X^{\star}(v) := Q_u(f,f)(v) - \frac{1}{2}\Big(\gamma_1 + \sum_{j=1}^d \gamma_{j+1} v_{j} + \gamma_{d+2} |v|^2\Big).
\end{split}
\end{align}
Let $g(v, \gamma) = \gamma_1 + \sum_{j=1}^d \gamma_{j+1} v_{j} + \gamma_{d+2} |v|^2$. Substituting \eqref{criticalQ} into the constraints $\psi_{j}(X^{\star})=0$ gives
\begin{align}\label{constraintEqns}
\rho_u \mathrel{\mathop:}=& \int_{\Omega_L} Q_u(f,f)(v) \text{d}v = \frac{1}{2} \int_{\Omega_L} g(v, \gamma) \text{d}v \nonumber \\
\mu^j_u \mathrel{\mathop:}=& \int_{\Omega_L} v_j Q_u(f,f)(v) \text{d}v= \frac{1}{2} \int_{\Omega_L} v_j g(v, \gamma) \text{d}v, \quad\quad j = 1, 2,\cdots, d, \\
e_u \mathrel{\mathop:}=& \int_{\Omega_L} |v|^2 Q_u(f,f)(v) \text{d}v = \frac{1}{2} \int_{\Omega_L} |v|^2  g(v, \gamma) \text{d}v\, .\nonumber
\end{align}
Identities \eqref{constraintEqns} form a system of $d+2$ linear equations with $d+2$ unknown variables that can be uniquely solved. Solving for the critical $\gamma_j$,
\begin{align}\label{criticalLambda}
\gamma_1 &= O_d \rho_u + O_{d+2} e_u \, ,\nonumber \\
\gamma_{j+1} &= O_d \mu^j_u \, , \quad\quad j = 1, 2,\cdots, d, \\
\gamma_{d+2} &= O_{d+2} \rho_u + O_{d+4} e_u \, ,\nonumber
\end{align}
Hence, relation \eqref{Lconstrains} holds.  Substituting these values of critical Lagrange multipliers \eqref{criticalLambda} into \eqref{criticalQ} gives explicitly the critical $Q_c(f,f)(v)\ := \ X^\star(v)$.  Moreover, the objective function $\mathcal{A}^e(X)$ can be computed at its minimum as
\begin{align}\label{iso-estimate}
\begin{split}
\mathcal{A}^e(Q_c(f,f)) &= \big\|Q_u(f,f) - Q_c(f,f) \big\|^2_{L^2(\Omega_L)} = \int_{\Omega_L} \big(Q_u(f,f)(v) - X^\star(v)\big)^2 \text{d}v \\
&= \tfrac{1}{4} \int_{\Omega_L} \Big(\gamma_1 + \sum_{j=1}^d \gamma_{j+1} v_{j} + \gamma_{d+2} |v|^2\Big)^2 \text{d}v \\
&\qquad\le \tfrac{d+2}{4} \int_{\Omega_L} \Big( \gamma^2_1 + \sum_{j=1}^d (\gamma_{j+1} v_{j} )^2 + \gamma^2_{d+2} |v|^4\Big) \\
&\qquad\quad\quad\le C(d) \Big( 2\gamma^2_1 L^d + (\sum_{j=1}^d \gamma^2_{j+1})L^{d+2} + \gamma^2_{d+2} L^{d+4} \Big), 
\end{split}
\end{align}
where $C(d)$ is an universal constant depending on the dimension of the space.  Hence, using the relation 
\eqref{criticalLambda} to replace  into  the right hand side of \eqref{iso-estimate},  yields a bound from above
to the difference of the conserved and unconserved approximating collision operators
\begin{align}\label{iso-estimate2}
\big\|Q_u(f,f) - Q_c(f,f) \big\|^2_{L^2(\Omega_L)} 
\le \frac{C(d)}{L^d} \, \Big( \rho^2_u + \frac{e^2_u}{L^{d+1}}  + \sum_{j=2}^{d+1} \mu^2_{j}  \, \Big)\, , 
\end{align}
and therefore, the lagrage estimate \eqref{lagrangeEstimate} holds.  Upon simplification one can obtain a detailed estimate for the $3$-dimensional case, given by 
\begin{align}\label{lagrangeEstimate-d3_2}
\begin{split}
\big\|Q_u(f,f) - Q_c(f,f) \big\|^2_{L^2(\Omega_L)} &= 2\gamma_1^2 L^3  + \tfrac{2}{3}(\gamma_2^2 + \gamma_3^2 + \gamma_4^2)L^5 +4 \gamma_1 \gamma_5 L^5 + \tfrac{38}{15} \gamma_5^2 L^7 \\
&\le\frac{C}{L^3} \, \Big( \rho^2_u + \frac{e^2_u}{L^4}  + \sum_{j=2}^{4} \mu^2_{j}  \, \Big)\, ,
\end{split}
\end{align}
which is precisely \eqref{lagrangeEstimate-d3}.  That this critical point is in fact the unique minimizer follows from the strict convexity of $\mathcal{A}^e$.
\end{proof}
\n Similarly, as it was proposed also in the simulations of \cite{GT09}, one can form the optimization problem for the inelastic case. The only difference is that now only $d+1$-collision invariants are conserved:

\medskip

\noindent\textbf{Inelastic Problem (IE):} Minimize in the Banach space
\begin{equation*}
\mathcal{B}^i=\left\{X\in L^{2}(\Omega_L): \int_{\Omega_L} X=\int_{\Omega_L} Xv=0\right\},
\end{equation*}
the functional
\begin{equation}\label{inelasticLagrangeProblem}
\mathcal{A}^i(X) := \int_{\Omega_L} \big(Q_u(f,f)(v) - X\big)^2 \text{d}v.
\end{equation}
\medskip

As in the elastic case, we state  a rather similar analog to the  Lagrange estimate for the inelastic collision law. The proof of this statement is similar to the case of elastic interacions, and we leave it to the readers.
\begin{lem}[Inelastic Lagrange Estimate]\label{l2}
The problem \eqref{inelasticLagrangeProblem} has a unique minimizer given by
\begin{equation}\label{lagrange_ine}
Q^{ine}_c(f,f)(v) := X^\star(v) = Q_u(f,f)(v) - \frac{1}{2} \Big(\gamma_1 + \sum_{j=1}^d \gamma_{j+1} v_{j}\Big) \,.
\end{equation}
The $\gamma_j$ are Lagrange multipliers associated with the {\sl inelastic} optimization problem given by
\begin{align}\label{inelasticCriticalLambda}
\begin{split}
\gamma_1 &= O_d \rho_u \, , \\
\gamma_{j+1} &= O_{d+2} \mu^j_u \, , \quad j = 1, 2,\cdots, d \, ,
\end{split}
\end{align}
In particular, for the three dimensional case the minimized objective function is
\begin{equation}\label{inelasticLagrangeEstimate}
\mathcal{A}^{i}(X^\star) = \big\|Q_u(f,f) - Q^{ine}_c(f,f) \big\|^2_{L^2(\Omega_L)} = 2\gamma_1^2 L^3  + \tfrac{2}{3}\big(\gamma_2^2 + \gamma_3^2 + \gamma_4^2\big)L^5 \,.
\end{equation}
\end{lem}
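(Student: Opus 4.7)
The plan is to mirror the Lagrange-multiplier argument used in Lemma~\ref{l1}, but with the energy constraint dropped, so the correction polynomial is affine in $v$ rather than quadratic and the resulting linear system for the multipliers is smaller and decoupled.

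First I would set
\begin{equation*}
\mathcal{H}(X,\boldsymbol{\gamma}) := \mathcal{A}^{i}(X) + \gamma_1\!\int_{\Omega_L}\! X\,\text{d}v + \sum_{j=1}^{d}\gamma_{j+1}\!\int_{\Omega_L}\! v_j X\,\text{d}v,
\end{equation*}
and compute the Euler--Lagrange equation $D_X\mathcal{H}=0$.  Since the integrand is independent of $X'$, this reduces pointwise to $2(X-Q_u(f)) + \gamma_1 + \sum_j\gamma_{j+1} v_j = 0$, which yields the claimed formula \eqref{lagrange_ine} for the critical point $X^{\star}$.  Strict convexity of $\mathcal{A}^i(X)$ in $X$ then guarantees that this critical point is the unique minimizer in $\mathcal{B}^i$.

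Next, I would substitute $X^{\star}$ back into the constraints $\psi_j(X^{\star})=0$ to obtain a linear system for the multipliers, analogous to \eqref{constraintEqns}.  Because $\Omega_L=(-L,L)^d$ is symmetric about the origin, all odd moments $\int_{\Omega_L} v_j\,\text{d}v$ and $\int_{\Omega_L} v_i v_j\,\text{d}v$ for $i\ne j$ vanish, so the system decouples into
\begin{align*}
\rho_u &= \tfrac{1}{2}\gamma_1\!\int_{\Omega_L}\! 1\,\text{d}v = \tfrac{1}{2}\gamma_1 (2L)^d, \\
\mu^j_u &= \tfrac{1}{2}\gamma_{j+1}\!\int_{\Omega_L}\! v_j^2\,\text{d}v = \tfrac{1}{2}\gamma_{j+1} \tfrac{(2L)^d L^2}{3}, \quad j=1,\dots,d.
\end{align*}
Solving directly gives $\gamma_1 = O_d\,\rho_u$ and $\gamma_{j+1}=O_{d+2}\mu_u^j$, which is \eqref{inelasticCriticalLambda}.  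Note that unlike the elastic case the off-diagonal coupling $\gamma_1\leftrightarrow\gamma_{d+2}$ simply does not appear because there is no $|v|^2$ term.

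Finally, to obtain \eqref{inelasticLagrangeEstimate}, I would evaluate $\mathcal{A}^i(X^\star) = \tfrac{1}{4}\int_{\Omega_L}(\gamma_1+\sum_j\gamma_{j+1}v_j)^2\,\text{d}v$.  Expanding and again using the vanishing of odd moments of $\Omega_L$, only the diagonal terms survive, giving $2\gamma_1^2 L^3 + \tfrac{2}{3}(\gamma_2^2+\gamma_3^2+\gamma_4^2)L^5$ in dimension $d=3$.  There is no real obstacle here; the argument is strictly easier than the elastic case of Lemma~\ref{l1} because the Gram matrix of $\{1, v_1,\dots, v_d\}$ on $\Omega_L$ is diagonal, whereas in the elastic case $\{1,|v|^2\}$ are not orthogonal and produce the mixed term $4\gamma_1\gamma_d L^5$ visible in \eqref{lagrangeEstimate-d3_2}.
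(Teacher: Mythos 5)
Your proposal is correct and follows exactly the route the paper intends: the paper omits the inelastic proof, stating only that it proceeds ``as in the Elastic case,'' and your argument is the straightforward adaptation of the proof of Lemma~\ref{l1} with the energy constraint dropped, including the correct observation that the Gram matrix of $\{1,v_1,\dots,v_d\}$ on $\Omega_L$ is diagonal so the system for the multipliers decouples and no cross term survives in \eqref{inelasticLagrangeEstimate}. The only blemish is notational: the mixed term in the elastic three-dimensional formula \eqref{lagrangeEstimate-d3_2} couples $\gamma_1$ with $\gamma_{d+2}=\gamma_5$, so your ``$4\gamma_1\gamma_d L^5$'' should read $4\gamma_1\gamma_{d+2}L^5$.
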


\medskip

\noindent{\bf Conservation Correction Estimate.}  We develop here an useful estimate between the unconserved and conserved discrete collisional forms.

\medskip

\noindent \textbf{Definition.}  For any fixed $f\in L^{2}(\Omega_L)$ the \textit{conserved operator} $Q_c(f,f)$ is defined as the minimizer of problem \textbf{(E)} defined by \eqref{lagrange_ela} (or problem \textbf{(IE)} in the inelastic case defined by \eqref{lagrange_ine}). 

\medskip

\noindent Note that the minimized objective function \eqref{lagrangeEstimate} in the elastic optimization problem depends only on the unconserved moments $\rho_u, \mu_u$, and $e_u$ of $Q_u(f,f)$. Since these quantities are expected to be approximations to zero, then the conserved projection operator is a perturbation of $Q_{u}(f,f)$ by a second order polynomial in the elastic case.   Similarly, it is a perturbation by a first order polynomial in the inelastic case.  
%
%
%
%
\begin{thm}[Conservation Correction Estimate/Elastic case]\label{t1}
Fix $f\in L^2(\Omega_L)$, then the accuracy of the conservation minimization problem is proportional to the spectral accuracy.  That is, for any $k'\geq k\geq0$ it follows that
\begin{align}\label{conservationEstimate}
\begin{split}
\big\| \big(Q_c(f,f) - Q_u(f,f)\big)|v|^{\lambda k} & \big\|_{L^{2}(\Omega_L)} \leq  \frac{C\,L^{\lambda k}}{(2\lambda k+d)^{1/2}} \big\|(\textbf{1} - \Pi^{N}_{2L})Q^{+}(\chi f, \chi f)\big\|_{L^2(\Omega_L)} \\
&\hspace{-0.3cm} + \frac{1}{(2\lambda k+d)^{1/2}}O_{(d/2+\lambda (k'-k))}\big(m_{k'+1}(f)m_{0}(f)+Z_{k'}(f)\big)\,,
\end{split}
\end{align}
where $C$ is a universal constant and $Z_{k'}(f)$ is defined by 
\begin{equation}\label{Zk0}
Z_{k'}(f):=\sum^{k'-1}_{j=0}
\left(\begin{array}{c}
 k'\\j
\end{array}\right)
m_{j+1}(f)m_{k'-j}(f).
\end{equation}
depending on the moments up to order $k'$ ( See also Appendix \eqref{Zk}).  As before, we are using the shorthand $O_{r}:=O(L^{-r})$.
\end{thm}
%
%
\begin{proof}
Using lemma \ref{l1} for elastic interactions, given a $0\leq k\in \R$, estimate
\begin{align}\label{after}
\begin{split}
\Big\| \left(Q_c(f,f) - Q_u(f,f)\right)|v|^{\lambda k} &\Big\|_{L^{2}(\Omega_L)} = \Big\| \tfrac{1}{2} \Big(\gamma_1 + \sum_{j=1}^d \gamma_{j+1} v_{j} + \gamma_{d+2} |v|^2\Big)|v|^{\lambda k}\Big\| _{L^{2}(\Omega_L)} \\
&\leq \frac{C\,L^{\lambda k}}{(2\lambda k+d)^{1/2}}\left( |\gamma_1| L^{d/2}+ |\gamma_j| L^{1+d/2}+|\gamma_{d+2}| L^{2+d/2}\right)\,.
\end{split}
\end{align}
For any $f \in L^{2}(\Omega_L)$ the Lagrange multipliers $\gamma_j$, $1\leq j\leq d+2$ can be estimated by observing that
\begin{align}\label{e1t1}
\begin{split}
\Big|\int_{\Omega_L}Q_{u}(f,f)(v)\psi(v)&\text{d}v  \Big| = \\
\Big|\int_{\Omega_L}\big(Q_{u}(f,f)(v) - Q(\chi f ,\chi f)(v)&\big)\psi(v)\text{d}v - \int_{\mathbb{R}^{d}\setminus\Omega_L}Q(\chi f ,\chi f)(v)\psi(v)\text{d}v\Big|\\
\leq \big\|(\textbf{1} - \Pi^{N}_{2L})&Q^{+}(\chi f,\chi f)\big\|_{L^2(\Omega_L)} \| \psi \|_{L^2(\Omega_L)}+I_{\psi}.
\end{split}
\end{align}
for $I_{\psi}$ defined by 
\begin{equation}\label{Ipsi}
I_{\psi}:= \Big|\int_{\mathbb{R}^{d}\setminus\Omega_L}Q^{+}(\chi f,\chi f)(v)\psi(v)\text{d}v - \int_{\mathbb{R}^{d}\setminus 0.95\Omega_L} Q^{-}(f(1-\chi),f)\psi(v)\text{d}v\Big|.
\end{equation}
Since
\begin{align}\label{e2t1}
\|  \,1\, \|_{L^{2}(\Omega_L)}& \sim L^{d/2} \, , \nonumber \\
\|  \,v_j\, \|_{L^{2}(\Omega_L)}&\sim L^{d/2+1}, \quad \text{for}\ \ j = 1, 2, 3, . . . , d \, ,  \\
\|\,  |v|^2 \|_{L^{2}(\Omega_L)}&\sim L^{d/{2} + 2} \, ,\nonumber
\end{align}
then,  for  $\psi =1, v^j, |v|^2$ with $j=1,2,\dots d$  the corresponding estimate 
 \eqref{e1t1}  combined with  \eqref{e2t1}  yield the following estimates to the unconserved moments defined in \eqref{constraintEqns}  
\begin{align}\label{finalConsError}
| \rho_u | & \leq   CL^{d/2}\big\| (\textbf{1}-\Pi^{N}_{2L})Q^{+}(\chi f, \chi f) \big\|_{L^2(\Omega_L)} + I_1\, , \nonumber \\
|\mu^j_u| & \leq   CL^{d/2+1} \big\| (\textbf{1}-\Pi^{N}_{2L})Q^{+}(\chi f, \chi f) \big\|_{L^2(\Omega_L)} + I_{v_j},  \quad j = 1, 2, 3,  . . . d \, ,  \\
|e_u| & \leq   CL^{d/2+2} \big\| (\textbf{1}-\Pi^{N}_{2L})Q^{+}(\chi f, \chi f) \big\|_{L^2B(\Omega_L)} + I_{|v|^{2}}\, .\nonumber
\end{align}
Therefore, using \eqref{finalConsError} in \eqref{criticalLambda}  Lagrange multipliers are estimated by 
\begin{align}\label{finalerror2}
|\gamma_1|= O_{d/2}&\big\| (\textbf{1}-\Pi^{N}_{2L})Q^{+}(\chi f, \chi f) \big\|_{L^2(\Omega_L)} +O_{d}I_1+O_{d+2}I_{|v|^2},\nonumber\\
|\gamma_j|= O_{d/2+1}&\big\| (\textbf{1}-\Pi^{N}_{2L})Q^{+}(\chi f, \chi f) \big\|_{L^2(\Omega_L)} +O_{d+2}I_{v_j},\quad j = 1, 2, 3,  . . . d \, ,\\
|\gamma_{d+2}|= O_{d/2+2}&\big\| (\textbf{1}-\Pi^{N}_{2L})Q^{+}(\chi f, \chi f) \big\|_{L^2(\Omega_L)} +O_{d+2}I_{1}+O_{d+4}I_{|v|^2}.\nonumber
\end{align}
Finally,  the Lagrangian critical parameters from \eqref{after}  are estimated  by \eqref{finalerror2} to  yield
\begin{align*}
\big\|\big(Q_c(f,f)- Q_{u}(f,f)\big)|v|^{\lambda k} \big\|_{L^2({\Omega_L})}&=\frac{C}{(2\lambda k+d)^{1/2}}\Big(L^{\lambda k} \big\| (\textbf{1}-\Pi^{N}_{2L})Q^{+}(\chi f,\chi f)\big\|^2_{L^2(\Omega_L)} \\
&\hspace{-.5cm} +  O_{d/2-\lambda k}\,I_{1} + O_{d/2+1-\lambda k}\,I_{v_j} + O_{d/2+2-\lambda k}\,I_{|v|^{2}} \Big).
\end{align*}
In order to estimate the second term in the above inequality, the terms $I_{\psi}$ defined in \eqref{Ipsi} are estimated combining classical moment estimates for binary collisional integrals for elastic interactions with hard potentials as shown in Theorem \ref{moments} in the appendix.  In particular,  for any $k'\geq0$ and $\lambda \in [0,2]$ 
\begin{align*}
\max\big\{I_{1}, L^{-1}I_{v_j}, L^{-2}I_{|v| ^{2}}\big\}&\leq CL^{-\lambda k'}\big(m_{k'+1}(\chi f)\;m_{0}(\chi f)+Z_{k'}(\chi f)\big)\\
&\leq CL^{-\lambda k'}\big(m_{k'+1}(f)\;m_{0}(f)+Z_{k'}(f)\big).
\end{align*}
Therefore, a simple calculation shows
\begin{equation*}
O_{d/2-\lambda k}I_{1} + O_{d/2+1-\lambda k}I_{v_j} + O_{d/2+2-\lambda k}I_{|v|^{2}} = O_{d/2+\lambda(k'-k)}\big(m_{k'+1}(f)m_0(f)+Z_{k'}(f)\big), 
\end{equation*}
and so inequality \eqref{after} holds. 

This estimate also follows for the  \textit{Inelastic collisions} case. Their computations follow in a similar fashion using lemma \ref{l2}, the Lagrange multipliers \eqref{inelasticCriticalLambda} and the first two inequalities in \eqref{finalConsError}.
\end{proof}

\subsection{Semi Discrete Conservation Method: Lagrange Multiplier Method}
\label{subsec:consDIT}
In this subsection we consider the discrete version of the conservation scheme. For such a discrete formulation, the conservation routine is implemented as a Lagrange multiplier method where the conservation properties of the discrete distribution are set as constraints. Let $M = N^d$, the total number of Fourier modes. For elastic collisions, $\rho = 0,\, \textbf{m} = (m_1, \cdots, m_d) = (0, \cdots, 0)$ and $e = 0$ are conserved, whereas for inelastic collisions, $\rho = 0$ and $\textbf{m} = (m_1, \cdots , m_d) = (0, \cdots,0)$ are conserved. Let $\omega_j > 0$ be the integration weights for $1\leq j \leq M$ and define
\begin{equation}\label{sdcm1} \textbf{Q}_{u} = \Big( \begin{array}{ccccc}
Q_{u,1} & Q_{u,2} & \cdots & Q_{u,M} \end{array} \Big)^T 
\end{equation}
as the distribution vector at the computed time step, and
\begin{equation}\label{sdcm2}  \textbf{Q}_{c} = \Big( \begin{array}{ccccc}
Q_{c,1} & Q_{c,2} & \cdots & Q_{c,M} \\ \end{array} \Big)^T 
\end{equation}
as the corrected distribution vector with the required moments conserved. For the elastic case, let
\begin{equation} \label{sdcm3} 
 \textbf{C}^e_{_{(d+2) \times M}} =  \left( \begin{array}{ccc}
 & \omega_j & \\
 & v_1\, \omega_j & \\
 & \cdots & \\
 & v_d\, \omega_j &\\
 & |v_j|^2\,\omega_j &  \\
\end{array} \right)
\quad 1 \leq j \leq M\,,
\end{equation} 
be the integration matrix where the $w_j, j=1\dots M$ are fixed set of quadrature points, and
\begin{equation} \label{sdcm4} 
\textbf{a}^e_{_{(d+2) \times 1}} =  \Big( \begin{array}{ccccc}
\frac{\text{d}}{\text{d}t}\rho & \frac{\text{d}}{\text{d}t}m_1 &\cdots & \frac{\text{d}}{\text{d}t}m_d & \frac{\text{d}}{\text{d}t}e \\
\end{array} \Big)^T  
\end{equation} 
{be the vector of conserved quantities. With this notation in mind, the semi discrete conservation method corresponding to (\ref{elasticLagrangeProblem0}, \ref{elasticLagrangeProblem}) is written as the constrained optimization problem}

\begin{align}\label{min-problem} 
\begin{split}
&\qquad\qquad \text{\it Find  the vector } \ \ \textbf{Q}_{c}  \in  \real^M\,  , \ \ \text{\it such that it is the unique solutions of } \\ 
&\mathcal{A}\big(\textbf{Q}_{c}\big) = \Big\{ \text{min}\, \| \textbf{Q}_{u} - \textbf{Q}_{c} \|_2^2 : \textbf{C}^e \textbf{Q}_{c} = \textbf{a}^e\; \text{with}\; \textbf{C}^e \in \mathbb{R}^{{d+2} \times M},\, \textbf{Q}_{u} \in \mathbb{R}^M,\, \textbf{a}^e \in \mathbb{R}^{d+2} \Big\}.
\end{split}
\end{align}
\\

\n In order to solve the constrained minimization problem $\mathcal{A}\big(\textbf{Q}_{c}\big)$, we employ the Lagrange multiplier method proposed by two of the authors \cite{GT09} in 2009.  The proposed algorithm works as follows. 

Let $\boldsymbol{\gamma} \in \mathbb{R}^{d+2}$ be the Lagrange multiplier vector. Then the scalar objective function to be optimized is given by
\begin{equation}
\text{L}\big(\textbf{Q}_{c}, \boldsymbol{\gamma}\big) = \sum_{j = 1}^M \big|Q_{u,j} - Q_{c,j}\big|^2 + \boldsymbol{\gamma}^T(\textbf{C}^e \textbf{Q}_{c} - \textbf{a}^e) \, ,
\label{lagrange}
\end{equation}
where $\textbf{C}^e$ is given by the integration matrix that computes the number of collision invariants associated to the conservation problem (i.e. $d+2$ for the elastic case or $d+1$ for the inelastic one). This matrix is independent of the solution and the time parameter.    Hence, it can be precomputed and use for different initial data and time steps.

Equation \eqref{lagrange} can be solved explicitly for the corrected distribution value and the resulting equation of correction be implemented numerically in the code.  Indeed, taking the derivative of $\text{L}\big(\textbf{Q}_{c}, \boldsymbol{\gamma}\big)$ with respect to $Q_{c,j}$, for $1\leq j \leq M$ and $\gamma_i$, for $1\leq i \leq d+2$
\begin{equation}\label{lambEq}
\frac{\partial \text{L}}{\partial Q_{c,j}}  =  0\,, \quad j = 1,\cdots, M \quad\Rightarrow\quad 
\textbf{Q}_{c} = \textbf{Q}_{u} + \frac{1}{2} (\textbf{C}^e)^T \boldsymbol{\gamma} \, .
\end{equation}
Moreover,
\begin{equation*}
\frac{\partial \text{L}}{\partial \gamma_i} =  \,, \quad i = 1, \cdots, d+2 \quad \Rightarrow \quad \textbf{C}^e \textbf{Q}_{c} = \textbf{a}^e \, ,
\end{equation*}
retrieves the constraints. Solving for $\boldsymbol{\gamma}$,
\begin{equation}
\textbf{C}^e (\textbf{C}^e)^T \boldsymbol{\gamma} = 2 (\textbf{a}^e - \textbf{C}^e \textbf{Q}_{u}) \, .
\end{equation}
Now $\textbf{C}^e (\textbf{C}^e)^T$ is symmetric and, because $\textbf{C}^e$ is an integration matrix, it is also positive definite. As a consequence, the inverse of $\textbf{C}^e (\textbf{C}^e)^T$ exists and one can compute the value of $\boldsymbol{\gamma}$ simply by
\begin{equation}
\boldsymbol{\gamma} = 2 (\textbf{C}^e (\textbf{C}^e)^T)^{-1} (\textbf{a}^e - \textbf{C}^e \textbf{Q}_{u}) \, .
\nonumber
\end{equation}
Substituting $\boldsymbol{\gamma}$ into \eqref{lambEq} and recalling that $\textbf{a}^e = \textbf{0}$,
\begin{eqnarray}
\textbf{Q}_{c} & = & \textbf{Q}_{u} + (\textbf{C}^e)^T \big(\textbf{C}^e (\textbf{C}^e)^T\big)^{-1} (\textbf{a}^e - \textbf{C}^e \textbf{Q}_{u}) \nonumber \\
& = & \left[\mathbb{I} - (\textbf{C}^e)^T \big(\textbf{C}^e (\textbf{C}^e)^T\big)^{-1} \textbf{C}^e \right] \textbf{Q}_{u} \nonumber \\
& =: & \Lambda_N(\textbf{C}^e)\, \textbf{Q}_{u} \, ,
\label{fEqCons}
\end{eqnarray}
where $\mathbb{I} = N \times N$ identity matrix.  In the sequel, we regard this conservation routine as \emph{Conserve}.  Thus,
\begin{equation}
Conserve(\textbf{Q}_{u}) = \textbf{Q}_{c} = \Lambda_N(\textbf{C}^e)\; \textbf{Q}_{u} \, .
\label{conservationRoutine}
\end{equation}
Define $D_t $ to be any time discretization operator of arbitrary order. Then, the discrete problem that we solve reads
\begin{equation}
D_t \textbf{f} = \Lambda_N(\textbf{C}^e)\,\textbf{Q}_{u} \, .
\label{completeScheme1}
\end{equation}
Thus, multiplying \eqref{completeScheme1} by $\textbf{C}^e$ it follows the conservation of observables
\begin{equation}
D_t \big( \textbf{C}^e \textbf{f} \big) = \textbf{C}^e D_t \textbf{f} = \textbf{C}^e \Lambda_N(\textbf{C}^e)\,\textbf{Q}_{u} = 0\,,
\label{conservationobsevables}
\end{equation}
where we used the commutation $\textbf{C}^e D_{t} = D_{t} \textbf{C}^e$ valid since $\textbf{C}^e$ is independent of time, see \cite{GT09} for additional comments.
\section{A priori estimates, propagation of moments and $L^{2}_{k}$-norm}\label{sec:apriori}

In this section we prove $L^1_{k}$ and $L^2_{k}$ estimates for the approximation solutions $\{g_N\}$ of the problem \eqref{fourierBTE} in the elastic case.  For this purpose, we use several well known results that require different integrability properties for the angular kernel $b$.  Thus, we will work with a bounded $b$ to avoid as much technicalities as possible and remarking that a generalization for $b\in L^{1}(\mathbb{S}^{d-1}) $ can be made at the cost of technical work \cite{ACGM, AG, MV04}.  For technical reasons this assumption helps since estimates for the gain part  of the collision operator become bilinear, that is, the role of the inputs can be interchanged without essentially altering the constants in the estimates.  We also restrict ourselves to the case of variable hard potentials and hard spheres $\lambda\in(0,1]$ and remark than the theory for Maxwell molecules $\lambda=0$ needs a different approach.

Recall that we have imposed conservation of mass, momentum and energy by building the operator $Q_c(g,g)$ with a constrained minimization procedure.  Thus,
\begin{equation*}
\int_{\Omega_L}g(t,v)\psi(v)\text{d}v=\int_{\Omega_L}g_0(v)\psi(v)\text{d}v
\end{equation*}
for any collision invariant $\psi(v)=\{1,v,|v|^{2}\}$.  However, due to velocity-mode truncation, the approximating solution $g$ in general may be negative in some small portions of the domain.  This is one of the important technical difficulty that we have to overcome.

Before starting with the calculations recall the smoothing property of the gain collision operator $Q^{+}$ given in \cite[Theorem 2.1]{BD}
\begin{equation}\label{C-universal}
\| Q^{+}(f,f) \|_{\dot{H}^{(d-1)/2}(\mathbb{R}^{d})}\leq C\|b\|_{L^{2}(\mathbb{S}^{d-1})}\|f\|^{2}_{L^{2}_{1+\lambda^{-1}}(\mathbb{R}^{d})}\,,
\end{equation}
{where $C$ is a universal constant only depending on the space dimension $d$.}

Therefore, recalling that $\text{supp}(Q^{+}(\chi g,\chi g))\subset \Omega_{2L}$ and using Parseval's theorem, it follows that (for $a>0$)
\begin{align*}
&\big\|(\textbf{1}-\Pi^{N}_{2L})Q^{+}(\chi g,\chi g)\big\|^{2}_{L^{2}(\Omega_{2L})}\\
&\ \ \ =\sum_{|k|\geq N}\big|\widehat{Q^{+}(\chi g,\chi g)}(\xi_{k})\big|^{2}=\sum_{|k|\geq N}\frac{1}{|\xi_{k}|^{2a}}\big|\widehat{(-\Delta)^{a/2}Q^{+}(\chi g,\chi g)}(\xi_{k})\big|^{2}\\
&\lesssim \frac{1}{N^{2a}}\sum_{|k|\geq N}\big|\widehat{(-\Delta)^{a/2}Q^{+}(\chi g,\chi g)}(\xi_{k})\big|^{2} \leq \frac{1}{N^{2a}}\big\|(-\Delta)^{a/2}Q^{+}(\chi g,\chi g) \big\|_{L^{2}(\Omega_{2L})}\,.
\end{align*}
As a conclusion of previous two facts, choosing $a=\frac{d-1}{2}$, we obtain an important estimate used in the following arguments
\begin{equation}\label{IPE}
\big\|(\textbf{1}-\Pi^{N}_{2L})Q^{+}(\chi g,\chi g)\big\|_{L^{2}(\Omega_{2L})} \leq \frac{C}{N^{\frac{d-1}{2}}}\|\chi g\|^{2}_{L^{2}_{1+\lambda^{-1}}(\Omega_{L})}\,,
\end{equation}
since  $\chi g$ vanishes outside a compactly supported set in $\Omega_L$, so we make use of the extension norm identity \eqref{extension-norm} that asserts $ \|\chi g\|_{L_{1+\lambda^{-1}}(\Omega_{2L})} = \|\chi g\|_{L_{1+\lambda^{-1}}(\Omega_{L})}$\, .
\subsection{Differential estimates for moments of the scheme}
In the analysis of the following two sections, we assume that a {semi-discrete } solution $g\in\mathcal{C}\big(0,T;L^{2}(\Omega_L)\big)$ for problem \eqref{fourierBTE} with initial condition $g_0\in L^{2}(\Omega_L)$ exists {satisfying condition \eqref{gas0}}.  We denote $T_{\epsilon}\geq0$ any time such that the smallness relation for the negative mass and energy of $g(t,v)$ and its boundedness in $L^{2}$ holds
\begin{equation}\label{gas}
 \sup_{t\in[0,T_\epsilon]}\Bigg(\epsilon(t):=\frac{\int_{\{g<0\}}|g(t,v)|\langle v \rangle^{2}\text{d}v}{\int_{\{g\geq0\}}g(t,v) \langle v \rangle^{2}\text{d}v}\Bigg)\leq\epsilon,\qquad \sup_{t\in[0,T_{\epsilon}]}\|g(t,\cdot)\|_{L^{2}(\Omega_L)}<\infty\,,
\end{equation}
for some fixed $\epsilon>0$ {sufficiently small to be specified below in \eqref{conep}}.   Observe that the \textit{conservation scheme} and this assumption implies that semi-discrete moments up to order 2 are controlled by the initial datum.  Indeed, for $k=\{0,2\}$
\begin{align*}
\int_{\Omega_L}|g||v|^{k}=\int_{\Omega_L}g|v|^{k} - & 2\int_{\Omega_L}g^{-}|v|^{k}=\int_{\Omega_L}g_0|v|^{k}-2\int_{\Omega_L}g^{-}|v|^{k}\\
\leq & \int_{\Omega_L}g_0|v|^{k}+2\epsilon\int_{\Omega_L}g^{+}|v|^{k}\leq \int_{\Omega_L}g_0|v|^{k}+2\epsilon\int_{\Omega_L}|g||v|^{k}.
\end{align*}
Indeed, choosing $\epsilon\leq1/4$ one obtains,
\begin{equation}\label{uc}
\int_{\Omega_L}|g(t,v)||v|^{k}\text{d}v\leq 2\int_{\Omega_L}g_0|v|^{k}\text{d}v,\quad \text{for}\;t\in[0,T_{\epsilon}]\,,\quad k=1,2.
\end{equation}

{{\emph Remark:}  Conditions \eqref{gas0} and \eqref{gas} are a sort of stability condition for the semi-discrete scheme.}

Next, we start getting estimates for the discrete conserved form \eqref{fourierBTE}.   Indeed, taking  right hand side  from \eqref{fourierBTE-0} combined with those of (\ref{fourierBTE00},\ref{fourierBTE0}), the discrete equation \eqref{fourierBTE}  for the numerical scheme  can be written in $(0,T_{\varepsilon}]\times\Omega_{L}$ as
\begin{equation}\label{basic-equation}\begin{split}
\frac{\text{d}g}{\text{d}t} &= Q_{c}(g,g)   = Q_{c}(g,g) \\
&- Q_{u}(g,g) + Q(\chi g,\chi g) \!-\! (\textbf{1} \!-\! \Pi^{N}_{2L})Q^{+}(\chi g,\chi g) \!-\! Q^{-}(1-\chi)g,\chi g)\; ,
\end{split}\end{equation} 
as the second term in this equation is actually  null.

In the next Lemma we prepare estimates to obtain an ordinary differential inequality that will yield uniform estimates    to the numerical moments  to the semi discrete solutions corresponding to the  initial value problem \eqref{fourierBTE}. 
\begin{lem}\label{L1prop}
Let $g$ solution of the numerical scheme satisfying \eqref{gas} and set $k\geq k_0\geq 2$.  Then,
\begin{align}\label{moment_ode}
\frac{\text{d}}{\text{d} t}m_k(g) \leq C_{k}\,\big(m_{0}(g_0) + m_{k}(g) \big) - \frac{\mu_{ \frac{\lambda}{2} }\,m_0(g_0)}{4}\,m_{k+1}(g) + C\,\frac{L^{\lambda k+d/2}}{N^{\frac{d-1}{2}}}\|g\|^{2}_{L^{2}_{1+\lambda^{-1}}(\Omega_{L})}\,,
\end{align}
{for any $g_0(v)$ satisfying the energy ratio condition \eqref{gas0}. In addition, $\mu_{\frac{\lambda}{2}}$, $k_{0}$ are constants given by  by \eqref{mu_k} and in \eqref{k_0}, respectively,  defined in the proof of this lemma.}
\end{lem}
\begin{proof}
We fix $k>0$ and $L>0$ and keep in mind that $g_0$ has support in $\Omega_L$, and thus, possesses moments of any order.  Multiply equation \eqref{basic-equation} by $\text{sgn}(g)|v|^{\lambda k}$ and integrate in $\Omega_L$
\begin{multline*}
\frac{\text{d}}{\text{d} t}\int_{\Omega_L}\big|g(v)\big||v|^{\lambda k}\text{d}v\\
=\int_{\Omega_L}Q(\chi g,\chi g)(v)\,\text{sgn}(g)(v)\,|v|^{\lambda k}\text{d}v - \int_{\Omega_L}Q^{-}((1-\chi)g,\chi g)(v)\,\text{sgn}(g)(v)\,|v|^{\lambda k}\text{d}v \\
+\int_{\Omega_L}\big(Q_{c}(g,g)(v)-Q_{u}(g,g)(v)\big)\,\text{sgn}(g)\,|v|^{\lambda k}\text{d}v - \int_{\Omega_{L}}(\textbf{1} - \Pi^{N}_{2L})Q^{+}(\chi g,\chi g)(v)\,\text{sgn}(g)\,|v|^{\lambda k}\\
\leq \int_{\Omega_L}Q^{+}(|\chi g|,|\chi g|)(v)|v|^{\lambda k}\text{d}v -\int_{\Omega_L}Q^{-}(g,\chi g)(v)\text{sgn}(\chi g)(v)|v|^{\lambda k}\text{d}v\\
+\big\|\big(Q_c(g,g)-Q_{u}(g, g)\big)|v|^{\lambda k}\big\|_{L^{1}(\Omega_L)} + \big\|(\textbf{1} - \Pi^{N}_{2L})Q^{+}(\chi g,\chi g)|v|^{\lambda k}\big\|_{L^{1}(\Omega_L)} .
\end{multline*}

We estimate each term starting with the loss collision operator.  Use $g = | g | - 2g^{-}$ to conclude that
\begin{align*}
\int_{\Omega_L}Q^{-}( g,\chi g)(v)\text{sgn}(g)(v)|v|^{\lambda k}\text{d}v\geq\int_{\Omega_L}&|g(v)||v|^{\lambda k}\int_{\mathbb{R}^{d}}|\chi g(v_*)||v-v_*|^\lambda \text{d}v_{*}\text{d}v\\
& - C_{d,\lambda}\,\epsilon \,\|g_0\|_{L^{1}_{2\lambda^{-1}}(\Omega_L)}\big(m_{k+1}(g)+m_{k}(g)\big)\, , 
\end{align*}
{where $\epsilon$ is the bound from the energy quotient from \eqref{gas}.} Whence, 
\begin{align}\label{e0L1prop}
\begin{split}
\int_{\Omega_L}Q^{+}(|\chi g|,|\chi g|)&(v)|v|^{\lambda k}\text{d}v -\int_{\Omega_L}Q^{-}(g,\chi g)(v)\text{sgn}(g)(v)|v|^{\lambda k}\text{d}v\\
& \leq \int_{\Omega_L}Q(|\chi g|,|\chi g|)(v)|v|^{\lambda k}\text{d}v\\
&\hspace{-2cm}- \int_{\Omega_L}Q((1 - \chi) |g|,|\chi g|)(v)|v|^{\lambda k}\text{d}v + {C_{d,\lambda}}\,\epsilon\,\|g_0\|_{L^{1}_{2\lambda^{-1}}(\Omega_L)}\big(m_{k+1}(g)+m_{k}(g)\big)\,.
\end{split}
\end{align}

\medskip

Using the \textit{conservative property of the scheme} it follows from the discussion in \cite{BGP04,ALod}
\begin{multline*}
\int_{\Omega_L}Q(|\chi g|,|\chi g|)(v)|v|^{\lambda k}\text{d}v\\
\leq \int_{\mathbb{R}^{d}}Q(|\chi g|,|\chi g|)(v)|v|^{\lambda k}\text{d}v \leq Z_{k}(g)-\mu_{k}\,m_0(g_0)\,m_{k+1}(\chi g), \quad \tfrac{2}{\lambda}<k\in\mathbb{Z},
\end{multline*}
where $Z_{k}(g)$ depends on the moments of $g$ of order \textit{less or equal} than $k$ and $\mu_{k}\nearrow1$ as $k\rightarrow\infty$ being a universal parameter given by
\begin{equation}\label{mu_k}
\mu_{k} := 1 - \frac{1}{2^k}\int_{\mathbb{S}^{d-1}}\left({1+\hat{u}\cdot\sigma}\right)^{k} b(\hat{u}\cdot\sigma)\, \text{d}\sigma\in (0,1)\,.
\end{equation}
We refer to \cite[Lemma 3]{BGP04} for details and proof.  Choose
{\begin{equation}\label{conep}
\epsilon\leq \min\Big{\{}\tfrac 14\,, \, \frac{\mu_{\frac{ \lambda }{ 2 }}\,m_0(g_0)}{2\, C_{d,\lambda} \,\|g_0\|_{L^{1}_{2\lambda^{-1}}(\Omega_L)} }\Big{\}}
\end{equation}}
in \eqref{e0L1prop} to conclude that,
\begin{align}\label{e1L1prop}
\begin{split}
\frac{\text{d}}{\text{d} t}m_k(g) \leq Z_{k}(g)  - \frac12 &{\mu_{ \frac{\lambda}{2} }\,m_0(g_0)}\, m_{k+1}(g)  + \big\|(Q_c(g,g)-Q_{u}(g,g))|v|^{\lambda k}\big\|_{L^{1}(\Omega_L)}\\
& + \big\|(\textbf{1} - \Pi^{N}_{2L})Q^{+}(\chi g,\chi g)|v|^{\lambda k}\big\|_{L^{1}(\Omega_L)}\,.
\end{split}
\end{align}
Using Cauchy-Schwarz inequality and   \eqref{conservationEstimate} from  Theorem \ref{t1},   it follows, for any $k'\geq k\geq0$, that 
\begin{align*}
\big\|(Q_c(g,g)&- Q_{u}(g, g))|v|^{\lambda k}\big\|_{L^{1}(\Omega_L)}\leq L^{d/2}\big\|(Q_c(g,g)-Q_{u}(g, g))|v|^{\lambda k}\big\|_{L^{2}(\Omega_L)}\\
& \leq \frac{C\,L^{\lambda k+d/2}}{(2\lambda k+d)^{1/2}}  \big\|(\textbf{1}-\Pi^{N}_{2L})Q^{+}(\chi g,\chi g)\big\|_{L^{2}(\Omega_L)}\\
&\hspace{2cm} + \frac{ O(L^{-\lambda(k'-k)})}{(2\lambda k+d)^{1/2}}  \big(m_{k'+1}(g)\;m_{0}(g_0)+Z_{k'}(g)\big)\,.
\end{align*}
Therefore, after choosing $k'=k>2$, one concludes that
\begin{align}\label{import}
&\frac{\text{d}}{\text{d} t}m_k(g)\leq 2\,Z_{k}(g) - \Bigg(\frac12\, \mu_{ \frac{\lambda}2 }\,m_0(g_0)-{\frac{\bf C}{ (2\lambda k+d )^{1/2}} }
\Bigg)\,m_{k+1}(g)\nonumber\\
&\ \ \ +CL^{\lambda k+d/2}\big\|(\textbf{1}-\Pi^{N}_{2L})Q^{+}(\chi g,\chi g)\big\|_{L^{2}(\Omega_L)} \leq C_{k}\,\big(m_{0}(g_0) + m_{k}(g) \big) \\
&\hspace{1cm} - \frac14{\mu_{ \frac{\lambda}{2} }\,m_0(g_0)}\,m_{k+1}(g)+CL^{\lambda k+d/2}\big\|(\textbf{1}-\Pi^{N}_{2L})Q^{+}(\chi g,\chi g)\big\|_{L^{2}(\Omega_L)}\,, \nonumber
\end{align}
{where ${\bf C}$ is a constant independent of $k$ and $\lambda$. }

\medskip
  
In the last inequality we used the classical fact that $Z_{k} \leq C_{k}\,\big(m_{0}(g) + m_{k}(g)\big)$ for some large constant $C_{k}$ depending only on $k$.  We also chose $k$ sufficiently large to make the largest moment an absorption term,
\begin{equation}\label{k_0}
k\geq k_0:={\frac{1}{2\lambda} \bigg(\frac{ \bf C}{\mu_{\frac{\lambda}{2}} m_0(g_0)}\bigg)^{2} - \frac{d}{2\lambda}\, \ge 2.}
\end{equation} 
{Finally, use estimate \eqref{IPE} in \eqref{import} to obtain the semi-discrete moment ordinary differential inequality \eqref{moment_ode}. }
\end{proof}
\begin{lem}\label{mainlemma} {[Lower bound estimate]}
Let $h(v)$ be a function satisfying \eqref{gas} for $\epsilon<1/2$.  Assume also that $\int_{\mathbb{R}^{d}} h(w)\,w\,\text{d}w=0$, and that
\begin{equation}\label{m_mu}
m_{\mu}:=\int_{\mathbb{R}^{d}}|h(w)||w|^{2+\mu}\text{d}w <\infty\,,\quad \mu>0.
\end{equation}
Then,
\begin{equation}\label{gas1}
\left(h\ast|u|^{\lambda}\right)(v)\geq \frac{C(h)\,\langle v \rangle^{\lambda}}{\max\big\{1,m^{(2-\lambda)/\mu}_{\mu}\big\}},
\end{equation}
with  ${C(h)}>0$ depending only on the mass, energy of $h$.  
\end{lem}
\begin{proof}
Notice that in the ball $B(0,r)$ one has for any $R>0$ and $\mu>0$,
\begin{align}\label{lbe1}
\begin{split}
\int_{|v-w|\leq R}\!\!h(w)|v-w|^2\text{d}w&\!=\!\int_{\mathbb{R}^d}\!\!h(w)|v-w|^2\text{d}w-\int_{|v-w|\geq R}h(w)|v-w|^2\text{d}w\\
&\geq {C(h)}\, \langle v \rangle^2-\frac{1}{R^\mu}\int_{|v-w|\geq R}|h(w)||v-w|^{2+\mu}\text{d}w\,.
\end{split}
\end{align} 
For the last inequality we expanded the square in the integral of the right side and used the fact that the momentum of $g$ is zero.  We use in the right side integral of \eqref{lbe1} the inequality $|v-w| \leq \langle v \rangle \langle w \rangle$ and the fact that $m_{\mu}<\infty$ to obtain
\begin{equation*}
\int_{|v-w|\leq R}h(w)|v-w|^2\text{d}w\geq {C(h)} \langle v \rangle^2-\frac{m_{\mu}}{R^\mu}\langle v\rangle^{2+\mu}\geq \frac{C(h)}{2}\langle v \rangle ^{2}\,, \quad \forall\; v \in B(0,r)\,,
\end{equation*}
provided 
{\begin{equation}\label{R_mu}
R:=\big(2m_{\mu}/C(h)\big)^{1/\mu}r. 
\end{equation} }
Therefore, using the control \eqref{gas}
\begin{align*}
\int_{\mathbb{R}^d}h(w)|v-w|^{\lambda}\text{d}w&=\int_{\mathbb{R}^d}|h(w)||v-w|^{\lambda}\text{d}w-2\int_{\{h<0\}}|h(w)||v-w|^{\lambda}\text{d}w\\
\geq (1-2\epsilon)\int_{\mathbb{R}^d}&|h(w)||v-w|^{\lambda}\text{d}w
\geq (1-2\epsilon)\int_{|v-w|\leq R}|h(w)||v-w|^{\lambda}\text{d}w\\
\geq&\frac{1-2\epsilon}{R^{2-\lambda}}\int_{|v-w|\leq R}|h(w)||v-w|^{2}\text{d}w\geq\frac{1-2\epsilon}{2R^{2-\lambda}}\,{C(h)}\langle v \rangle^{2}\,,
\end{align*}
valid for any $v\in B(0,r)$ and provided $\epsilon<\frac{1}{2}$.  Moreover, for any $\lambda\in(0,1]$
\begin{align*}
\int_{\mathbb{R}^d}h(w)|v-w|^{\lambda}\text{d}w &\geq (1-2\epsilon)\int_{\mathbb{R}^d}|h(w)||v-w|^{\lambda}\text{d}w\\
&\geq (1-2\epsilon)\|h\|_{L^{1}_{2\lambda^{-1}}}\big(|v|^\lambda - 2\big),
\end{align*}
as a consequence,
\begin{equation}\label{gas2}
\int_{\mathbb{R}^d}\!\!h(w,t)|v-w|^{\lambda}\!\text{d}w\!\geq\!(1-2\epsilon)\left(\frac{C(h)}{2R^{2-\lambda}}\,\textbf{1}_{B(0,r)}+\|h\|_{L^{1}_{2\lambda^{-1}}}\big(|v|^\lambda - 2\big)\,\textbf{1}_{B(0,r)^c} \right).
\end{equation}
Inequality \eqref{gas1} follows from \eqref{gas2} choosing $r=3^{1/\lambda}$ in the definition \eqref{R_mu} of $R$.
\end{proof}
\subsection{Time differential estimates for the $L^{2}_k$-norm of the conservative semi-discrete scheme}
The lower bound on the collision operator given in Lemma \ref{mainlemma} will allow us to control the $L^{2}_{k}$-norms of $g$.  Multiplying equation \eqref{basic-equation} by $g\langle v \rangle^{2\lambda k}$ and integrating on $\Omega_L$ on has
\begin{multline*}
\frac{1}{2}\frac{\text{d}}{\text{d} t}\|g\|^{2}_{L^{2}_{k}(\Omega_L)}
=\int_{\Omega_L}\langle v \rangle^{2\lambda k}g\;Q^{+}(\chi g,\chi g)\text{d}v - \int_{\Omega_L}\langle v \rangle^{2\lambda k}g\;Q^{-}( g,\chi g)\text{d}v\\
+ \int_{\Omega_L}\langle v \rangle^{2\lambda k}g\;\big(Q_{c}(g,g)-Q_{u}(g,g)\big)\text{d}v - \int_{\Omega_L}\langle v \rangle^{2\lambda k}g\;\big(\textbf{1}-\Pi^{N}_{2L}\big)Q^{+}(\chi g,\chi g)\text{d}v\\
\leq \int_{\Omega_L}\langle v \rangle^{2\lambda k}g\;Q^{+}(\chi g,\chi g)\text{d}v - \int_{\Omega_L}\langle v \rangle^{2\lambda k}g\;Q^{-}( g,\chi g)\text{d}v \\
+ \bigg(\big\|(Q_{c}(g,g)-Q_{u}(g,g))|v|^{\lambda k}\big\|_{L^{2}(\Omega_L)} + \big\|(\textbf{1}-\Pi^{N}_{2L})Q^{+}(\chi g,\chi g)\,|v|^{\lambda k}\big\|_{L^{2}(\Omega_L)}\bigg)\,\|g\|_{L^{2}_{k}(\Omega_L)}\,. 
\end{multline*} 
Using smoothing properties of the gain collision operator, see Theorem \ref{Tlp} in the appendix or refer to \cite{MV04,AG}, the lower bound control \eqref{gas1}, {and noticing that $C(g) = C(g_0)$ due to the conservation routine}, it follows that
\begin{align*}
\int_{\Omega_L}&\langle v \rangle^{2\lambda k}g\;Q^{+}(\chi g,\chi g)\text{d}v -  \int_{\Omega_L}\langle v \rangle^{2\lambda k}g\;Q^{-}( g,\chi g)\text{d}v \\
&\quad \leq \bigg(\frac{\max\big\{1,m^{(2-\lambda)/\mu}_{\mu}\big\}}{C(g_0)}\bigg)^{\theta_{1}}\|g\|^{\theta_{2}}_{L^{1}_{k}(\Omega_{L})}\|g\|^{1+1/d}_{L ^{2}_{k}(\Omega_{L})}\\
&\qquad\ \  - {C(g_0)}\bigg(\frac{1}{\max\big\{1,m^{(2-\lambda)/\mu}_{\mu}\big\}} 
- {\frac{C}{L^{2-\lambda}}}\bigg)\,\|g\| ^{2}_{L^{2}_{k+1/2}(\Omega_{L})}\,,
\end{align*}
with constant $C(g_0)$ depending only on mass and energy, $m_{\mu}$ defined in \eqref{m_mu}, and some universal $\theta_{1}>1,\theta_{2}>1$.  Meanwhile, using again estimates from Theorem \ref{t1}, the rest of the terms can be controlled by
\begin{align*}
\big\|(Q_{c}(g,g) &- Q_{u}(g,g))|v|^{\lambda k}\big\|_{L^{2}(\Omega_L)} + \big\|(\textbf{1}-\Pi^{N}_{2L})Q^{+}(\chi g,\chi g)\,|v|^{\lambda k}\big\|_{L^{2}(\Omega_L)}\\
&\leq L ^{\lambda k} \big\|(1-\Pi^{N}_{2L})Q^{+}(\chi g,\chi g)\big\|_ {L^{2}{(\Omega_L)}}+O_{d/2}\big(m_{k+1}(g)\,m_0(g)+Z_{k}(g)\big)\,,
\end{align*}
with $O_{r}$ defined in  \eqref{O_r}.  Therefore,  we conclude, provided $L\geq 2\max\big\{1,m^{1/\mu}_{\mu}\big\}$, that
\begin{align}\label{L2-diff-inequality}
\begin{split}
\frac{\text{d}}{\text{d} t}&\|g\|_{L^{2}_{k}(\Omega_L)} \leq \bigg(\frac{\max\big\{1,m^{(2-\lambda)/\mu}_{\mu}\big\}}{C(g_0)}\bigg)^{\theta_{1}}\|g\|^{\theta_{2}}_{L^{1}_{k}(\Omega_{L})}\|g\|^{1/d}_{L ^{2}_k{(\Omega_L)}}\\
&\hspace{2.5cm}  -\frac{C(g_0)}{\max\big\{1,m^{(2-\lambda)/\mu}_{\mu}\big\}}\|g\|_{L^{2}_{k+1/2}{(\Omega_L)}} \\
&+ L ^{\lambda k} \big\|(1-\Pi^{N}_{2L})Q^{+}(\chi g,\chi g)\big\|_ {L^{2}{(\Omega_L)}} + O_{d/2}\big(m_{k+1}(g)\,m_0(g)\!+\!Z_{k}(g)\big)\,.
\end{split}
\end{align}
Plugging \eqref{IPE} into \eqref{L2-diff-inequality} proves the first part following lemma.
\begin{lem}\label{L2propagation}
Fix $k\geq0$ and $\mu>0$ and assume $g$ is a solution of the numerical scheme satisfying \eqref{gas} for a small proportion $\epsilon\leq\epsilon(g_0)$ and cut-off domain $L \geq 2\max\big\{1,m^{1/\mu}_{\mu}\big\}$.  Then, the following differential inequality holds
\begin{align}\label{L2-diff-inequality-1}
\begin{split}
\frac{\text{d}}{\text{d} t}\|g\|_{L^{2}_{k}(\Omega_L)} \leq \bigg(\frac{\max\big\{1,m^{(2-\lambda)/\mu}_{\mu}\big\}}{C(g_0)}\bigg)^{\theta_{1}}&\|g\|^{\theta_{2}}_{L^{1}_{k}(\Omega_{L})}\|g\|^{1/d}_{L ^{2}_k{(\Omega_L)}}-\frac{C(g_0)}{\max\big\{1,m^{(2-\lambda)/\mu}_{\mu}\big\}}\|g\|_{L^{2}_{k+1/2}{(\Omega_L)}}\\
&\hspace{-2.5cm} + C\,\frac{L ^{\lambda k}}{N^{\frac{d-1}{2}}}\|g\|^{2}_{L^{2}_{1+\lambda^{-1}}(\Omega_L)} + O_{d/2}\Big(m_{k+1}(g)\,m_0(g)+Z_{k}(g)\Big)\,,
\end{split}
\end{align}
for some universal $\theta_{1}, \theta_{2}>1$ and  $O_{r}$ defined in  \eqref{O_r}.  Moreover, the negative part of $g$ satisfies
\begin{align}\label{nmest}
\begin{split}
\frac{d}{dt}\|g^{-}\|_{L^{2}(\Omega_L)}  \leq  \ & C  \|g_{0}\|_{L^{1}_{2}(\Omega_L)}\|g^{-}\|_{L^{2}(\Omega_L)} \!\\
&+ \!\frac{C}{N^{\frac{d-1}{2}}}\|g\|^{2}_{L^{2}_{1+\lambda^{-1}}(\Omega_L)} + O_{d/2+\lambda(k-1)}m_{k}(g)m_{0}(g_0)\,.
\end{split}
\end{align}

\end{lem}
\begin{proof}
For the part related to the negative mass, note that writing $g=g^{+}+g^{-}$ it follows that
\begin{align}\label{nmest1}
Q^{+}(g,g)\,g\,\textbf{1}_{\{g\leq0\}}&=\big(Q^{+}(g^{+},g^{+})+{Q^{+}(g^{+},g^{-})+Q^{+}(g^{-},g^{+})}+Q^{+}(g^{-},g^{-})\big) g\, \textbf{1}_{\{g\leq0\}} \nonumber\\
&\leq\left(Q^{+}(g^{+},g^{-})+Q^{+}(g^{-},g^{+})\right) g\,\textbf{1}_{\{g\leq0\}}\,.
\end{align}
Thus, using Young's inequality \cite{ACG, AC, MV04} one concludes that 
\begin{align*}
\int_{\Omega_L}Q^{+}(g,g) \, g \, &\textbf{1}_{\{g\leq0\}}\,\text{d}v \leq \int_{\Omega_L}\big(Q^{+}(g^{+},g^{-})+Q^{+}(g^{-},g^{+})\big)\,g\,\textbf{1}_{\{g\leq0\}}\,\text{d}v\nonumber\\
&\leq C\,\|b\|_{\infty}\|g^{+}\|_{L^{1}_{1}(\Omega_L)}\|g^{-}\|^{2}_{L^{2}(\Omega_L)} \leq C\,\|g_0\|_{L^{1}_{2\lambda^{-1}}(\Omega_L)}\,\|g^{-}\|^{2}_{L^{2}(\Omega_L)}\,,
\end{align*}

In this last inequality was important the bilinear estimates for $Q^{+}$ valid for $b\in L^{\infty}$.   Recall, additionally, that Lemma \ref{mainlemma} implies
\begin{equation*}
\int_{\Omega_L}Q^{-}(g,g)\,g\,\textbf{1}_{\{g\leq0\}}\,\text{d}v \geq \frac{C(g_0)}{\max\big\{1,m^{(2-\lambda)/\mu}_{\mu}\big\}}\|g^{-}\|^{2}_{L^{2}_{1/2}(\Omega_L)}\geq0.
\end{equation*}
As a consequence, multiplying equation \eqref{basic-equation} by $g^{-}$, integrating in $\Omega_{L}$ and invoking Theorem \ref{t1} with $k'=k-1$ and $k=0$ , one concludes that
\begin{align*}
\frac{d}{dt}\|g^{-}&\|_{L^{2}(\Omega_L)}  \leq  C \|g_{0}\|_{L^{1}_{2\lambda^{-1}}(\Omega_L)}\|g^{-}\|_{L^{2}(\Omega_L)} \\
&+ C\,\big\|(\textbf{1}-\Pi^{N}_{2L})Q^{+}(\chi g,\chi g)\big\|_{L^{2}(\Omega_L)} + O_{d/2+\lambda(k-1)}m_{k}(g)m_{0}(g_0)\,.
\end{align*}
The proof follows after plugging \eqref{IPE} in this estimate.
\end{proof}
\subsection{Uniform propagation of moments and $L^{2}_{k}$-norms} Now we are ready to prove uniform propagation of the scheme provided the requirement on the negative mass \eqref{gas} is met for $0<\epsilon\leq\epsilon(g_0)$.  Since Lemma \ref{L1prop} and Lemma \ref{L2propagation} hold for the aforementioned conditions on $\epsilon(g_0)$, one has the following to estimates on the $k$-moment and the $L^{2}$-norm.
\begin{align*}
\frac{\text{d}}{\text{d} t}&m_k(g)\leq C_{k}\,\big(m_{0}(g_0) + m_{k}(g) \big) - \frac14{\mu_{ \frac{\lambda}{2} }\,m_0(g_0)}\,m_{k+1}(g)+C\,\frac{L^{\lambda k+d/2}}{N^{\frac{d-1}{2}}}\|g\|^{2}_{L^{2}_{1+\lambda^{-1}}(\Omega_{L})}\,,\quad k\geq k_0\,,\\
\frac{\text{d}}{\text{d} t}&\|g\|_{L^{2}(\Omega_L)}\leq \bigg(\frac{\max\big\{1,m^{(2-\lambda)/\mu}_{\mu}\big\}}{C(g_0)}\bigg)^{\theta_{1}}\|g\|^{\theta_{2}}_{L^{1}(\Omega_{L})}\|g\|^{1/d}_{L ^{2}{(\Omega_{L})}}-\frac{C(g_0)}{\max\big\{1,m^{(2-\lambda)/\mu}_{\mu}\big\}}\|g\|_{L^{2}_{1/2}{(\Omega_{L})}}\\
&\hspace{3.0cm} + \frac{C}{N^{\frac{d-1}{2}}}\|g\|^{2}_{L^{2}_{1+\lambda^{-1}}(\Omega_{L})} + O_{d/2}\|g_{0}\|^{2}_{L^{1}_{2}(\Omega_{L})}\,.
\end{align*}
Note that using Young's inequality
\begin{align*}
\bigg(\frac{\max\big\{1,m^{(2-\lambda)/\mu}_{\mu}\big\}}{C(g_0)}\bigg)^{\theta_{1}}\|g\|^{\theta_{2}}_{L^{1}(\Omega_{L})}&\|g\|^{1/d}_{L ^{2}{(\Omega_{L})}}\\
&\hspace{-3.2cm}\leq C_{1}(g_{0}) + C_{2}(g_{0})  m^{\theta_{1}(1+d'/d)(2-\lambda)/\mu}_{\mu} + \frac{C(g_0)}{2\max\big\{1,m^{(2-\lambda)/\mu}_{\mu}\big\}}\|g\|_{L^{2}{(\Omega_{L})}}\,.
\end{align*}
Now, choose $\mu=\lambda k - 2$, so that $m_{\mu}=m_{k}(g)$, and take $k\geq k_0$ such that $\theta_{1}(1+d'/d)(2-\lambda)/\mu\leq 1$.  Then, adding previous two differential equations, one has
\begin{align*}
\frac{\text{d}}{\text{d} t}\big(m_k(g)+\|g\|_{L^{2}(\Omega_L)}\big) \leq \bigg( C_{k}(g_0) & - c(g_0)m^{1+1/k}_{k}(g)  - \frac{C(g_0)}{\max\big\{1,m^{(2-\lambda)/\mu}_{k}\big\}}\|g\|_{L^{2}{(\Omega_{L})}} \bigg) \\
& +  \frac{C\,L^{\lambda(k+2) + d/2 + 2}}{N^{\frac{d-1}{2}}}\|g\|^{2}_{L^{2}(\Omega_{L})}\,,
\end{align*} 
thus, defining $X(t) := m_k(g)+\|g\|_{L^{2}(\Omega_L)}$ and performing some algebra it follows that
\begin{equation}\label{propation-final}
\frac{\text{d}X}{\text{d} t} \leq \max\big\{1,m^{(2-\lambda)/\mu}_{k}\big\}\bigg(C_{k}(g_0) - c(g_{0})X + \frac{C\,L^{\lambda(k+2) + d/2 + 2}}{N^{\frac{d-1}{2}}}X^{2+(2-\lambda)/\mu}\bigg)\,.
\end{equation}
With this estimate we are in position of proving the following proposition.
\begin{prop}\label{propagation-moments-L2norms}
Fix $k\geq k_{*}$ and assume $g$ is a solution of the numerical scheme satisfying \eqref{gas} for $0<\epsilon\leq \epsilon(g_{0})$ with cut-off domain $L\geq 2\,\max\{1, m^{1/(\lambda k -2)}_{k}\}$.  Then, there exists a threshold $\eta(g_{0})>0$ only depending on $g_{0}$ such that if $$\frac{L^{\lambda(k+2)+d/2+2}}{N^{(d-1)/2}}\leq\eta(g_{0})\,,$$ then,
\begin{align*}
\sup_{t\geq0} m_{k}(g) &\leq \max\big\{C_{k}(g_0),m_{k}(g_0),\|g_{0}\|_{L^{2}(\Omega_{L})}\big\}=:\mathfrak{c}^{k}_{1}(g_0)\,,\quad\text{and}\\
\sup_{t\geq0} \|g\|_{L^{2}_{k'}(\Omega_{L})} &\leq \max\big\{C_{k'}(g_0),m_{k'+1}(g_0),\|g_{0}\|_{L^{2}_{k'}(\Omega_{L})}\big\}=:\mathfrak{c}^{k'}_{2}(g_0)\,,\quad \forall\,0\leq k'\leq k-1\,.
\end{align*}
Here $k_{*}\geq k_0$ is such that $\theta_{1}(1+d'/d)(2-\lambda)/(\lambda k_{*} - 2)\leq 1$ and $C_{k}(g_0)$ a constant depending on mass and energy of $g_0$ and $k$.
\end{prop}  
\begin{proof}
Consider the polynomial $p(x)=C_{k}(g_0) - c(g_{0})x + C\,\eta\, x^{2+(2-\lambda)/\mu}$.  Note that for sufficiently small $\eta$, depending only on $k\geq k_{*}\geq 2$ and the mass and energy of $g_0$, this polynomial has two positive roots $r_{1}$ and $r_{2}$.  As $\eta$ vanishes, $r_{1} \searrow C_{k}(g_{0})/c(g_0)$ and $r_{2}\nearrow\infty$.  Thus, choose $0<\eta$ sufficiently small such that
\begin{equation*}
m_{k}(g_0) + \|g_0\|_{L^{2}(\Omega_{L})}<r_{2}\,,
\end{equation*}
then, the differential inequality \eqref{propation-final} written as
\begin{equation*}
\frac{\text{d}X}{\text{d} t} \leq \max\big\{1,m^{(2-\lambda)/\mu}_{k}\big\}\,p(X)
\end{equation*}
for $\frac{L^{\lambda(k+2)+d/2+2}}{N^{(d-1)/2}}\leq\eta$ implies that
\begin{equation*}
\sup_{t\geq0}X(t)\leq \max\big\{C_{k}(g_0),X(0)\big\}\,.
\end{equation*}
This proves the first inequality of the statement and the propagation of $\|g\|_{L^{2}(\Omega_{L})}$.  Provided the latter, we use Lemma \ref{L2propagation} to conclude the second statement.
\end{proof}  
\section{Existence and regularity of the scheme}\label{sec:AccuracyConsistency}
\subsection{Existence} Now we are ready, thanks to the estimates of previous section, to produce a proof of existence and uniqueness of the numerical scheme.  We assume that $f_0\in L^{2}(\mathbb{R}^{d})$ is supported in $\Omega_{L}$, where the choice of the cut-off domain $\Omega_L$ was discussed in section~\ref{sec-cut-off}, and that $g_0=\Pi^{N}_{L}f_0$ satisfies
\begin{equation}\label{cofN}
\|g_0^{-}\|_{L^{2}(\Omega_L)}\approx 0\,.
\end{equation}
for $N \geq N_{0}(g_0)$ sufficiently large.  Observe also that defining the metric space $\mathcal{X}\subset\mathcal{C}(0,T;L^{2}(\Omega_{L}))$ as
\begin{align*}
\mathcal{X}:&=\big\{f\in\mathcal{C}(0,T;L^{2}(\Omega_{L})): (1)\,\sup_{t\in[0,T]}\|f(t)\|_{L^{2}(\Omega_{L})}\leq 2\mathfrak{c}^{0}_{2}(g_0)\,,\;(2)\,\sup_{t\in[0,T]}m_{k}(f)\leq 2\mathfrak{c}^{k}_{1}(g_0)\big\}\,,\\
&\text{and the operator}\;\;\mathcal{T}:\mathcal{X}\rightarrow\mathcal{C}(0,T;L^{2}(\Omega_{L}))\;\;\text{as}\;\; \mathcal{T}(f)(t)=g_0+\int^{t}_{0}Q_{c}(f)(s)\text{d}s\,,
\end{align*}
where $k\geq k_{*}\geq2$ and $\mathfrak{c}^{k}_{1}, \mathfrak{c}^{0}_{2}$ are those from Proposition \ref{propagation-moments-L2norms}, one has the estimates for some $a,b_{k}>0$
\begin{align*}
&\sup_{t\in[0,T]}\|\mathcal{T}(f)-\mathcal{T}(\tilde {f}\, )\|_{L^{2}(\Omega_{L})}\leq C(c^{k}_{1},c^{0}_{2})\,L^{a}\,T\,\sup_{t\in[0,T]}\|f - \tilde{f}\,\|_{L^{2}(\Omega_{L})}\,,\\
&\sup_{t\in[0,T]}m_{k}\big({\mathcal{T}(f)}\big) \leq m_{k}(g_0) + C(c^{k}_{1},c^{0}_{2})\,L^{b_{k}}\,T\,,\qquad f\,,\tilde{f}\in\mathcal{X}\,.
\end{align*}
As a consequence, choosing $T_{L}:=1/L^{a+b_{k}}$ for $L\geq L_0(g_{0})$ sufficiently large, it follow that $\mathcal{T}$ is a contraction with $\mathcal{T}(\mathcal{X})\subset\mathcal{X}$.  Using Banach fix point theorem, the scheme has a unique solution in $[0,T_{L}]$.  
\begin{thm}\label{extension}
Set $g_{0}=\Pi^{N}f_0 \in  L^{1}_{k}\cap L^{2}(\Omega_L)$, with $k\geq k_{*}\geq2$.  For any time $T>0$ and domain cut-off $L\geq L_0(T,g_0)>0$ there exists a number of modes $N_0(T,L, g_0)>0$ such that the Problem \eqref{fourierBTE} has a unique solution $g\in\mathcal{C}(0,T;L^{2}(\Omega_L))$ for any $N\geq N_0$ satisfying the estimates
\begin{equation*}
\sup_{ t\in[0,T] }\|g\|_{ L^{2}(\Omega_L) } \leq c^{0}_{k}(g_0),\qquad \sup_{ t\in[0,T] }m_{k}(g) \leq \mathfrak{c}^{k}_{1}(g_0) \,,
\end{equation*}
and negative mass estimate
\begin{align*}
&\|g^{-}(t)\|_{L^{2}(\Omega_L)}\ \leq C(\mathfrak{c}^{k}_{1},\mathfrak{c}^{0}_{2})\,e^{C\|g_0\|_{L^{1}_{2/\lambda}(\Omega_{L})}t}\\
&\hspace{1cm} \Big( \|g^{-}_{0}\|_{L^{2}(\Omega_{L})} + O\big(L^{2(1+\lambda)}/N^{(d-1)/2}\big) + \|g_0\|_{L^{1}_{2}(\Omega_{L})}\,O\big(1/L^{d/2+\lambda(k-1)}\big) \Big)\,.
\end{align*}
Furthermore, the sequence $\{g=g_{N}\}$ formed with initial condition $g_{0}$ converges strongly in $\mathcal{C}(0,T;L^{2}(\Omega_L))$ to $\bar{g}$, the solution of problem
\begin{equation}\label{limiteq}
\frac{\partial\bar{g}}{\partial t} = Q^{+}(\chi \bar{g},\chi \bar{g})  - Q^{-}(\bar{g},\chi\bar{g}) - \tfrac{1}{2}\Big(\bar{\gamma}_{1} + \sum^{d}_{j=1}\bar{\gamma}_{j+1}v_{j} + \bar{\gamma}_{d+2}|v|^{2}\Big)\,,\ \ (t,v)\in[0,T]\times\Omega_{L}\,,
\end{equation}
with initial condition $g_{0}=f_{0}$.  Above, the coefficients $\bar{\gamma}$ are given in Lemma \ref{l1} with parameters \eqref{Lconstrains}-\eqref{constraintEqns} evaluated at $Q^{+}(\chi \bar{g},\chi \bar{g})  - Q^{-}(\bar{g},\chi\bar{g})$.
\end{thm}
\begin{proof}
We start with $T>0$ given, $L>2\max\{1,(2\,c^1_{k}(g_{0}))^{1/\lambda k -2}\}$, and $N>0$ such that $L^{\lambda(k+2)+d/2+2}{N^{(-d+1)/2}}\leq\eta(g_{0})$.  We discussed that there exists a unique solution $g\in\mathcal{X}$ in the interval $I_{1}:=[0,1/L^{a+b_{k}}]$.  Note that the negative mass of such solution increases continuously in time.  Indeed, multiplying the scheme \eqref{basic-equation} by $g^{-}$, it readily follows that
\begin{align}\label{g^-est}
\begin{split}
\frac{\text{d}}{\text{d}t}\|g^{-}\|_{L^{2}(\Omega_{L})}&\leq C(c^{k}_{1},c^{0}_{2})\,L^{a}\,. \ \ \text{And, as a consequence}, \\
\|g^{-}(t_{1})\|_{L^{2}(\Omega_{L})} &\leq \|g^{-}(t_{0})| _{L^{2}(\Omega_{L})} + C(c^{k}_{1},c^{0}_{2})\,L^{a}(t_{1} - t_0)\,.
\end{split}
\end{align}
Since $g^{-}(0)\approx 0$, it means that the requirement on the negative mass of Proposition \ref{propagation-moments-L2norms} is satisfied in some interval $[0,t_{*}]\subset I_{1}$
\begin{equation}\label{nmass-ok}
0<\epsilon(t)\leq \epsilon(g_{0}) \,,\quad t\in [0,t_{*}]\,.
\end{equation}
Moreover, $L>0$ and $N>0$ were chosen to satisfy the requirements as well, therefore, estimate \eqref{nmest} holds in $[0,t_{*}]$.  Recalling the notation $O_{r}$, as  defined in  \eqref{O_r} and  integrating  estimate \eqref{g^-est}, it follows that
\begin{align*}
\|g^{-}(t)\|_{L^{2}(\Omega_{L})}& \leq e^{C\|g_0\|_{L^{1}_{2/\lambda}(\Omega_{L})}t}\Big(\|g^{-}_{0}\|_{L^{2}(\Omega_{L})} + \frac{4}{N^{(d-1)/2}}(\mathfrak{c}^{0}_{2}(g_{0}))^{2} + O_{d/2+\lambda(k-1)}\,c^{1}_{k}(g_{0})\,m_{0}(g_0)\Big)\\
&=:\varepsilon(t,L,N)\leq\varepsilon(T,L,N)\,.
\end{align*}
Now, note that
\begin{equation*}
\int_{\{g<0\}} |g(t,v)|\langle v \rangle^{2}\text{d}v \leq L^{d/2+2}\|g^{-}(t)\|_{ L^{2}(\Omega_{L} )}\leq L^{d/2+2}\varepsilon(t,L,N)\,,
\end{equation*} 
as a consequence, we can increase $L$ and $N$, if necessary, so that
\begin{align*}
\epsilon(t) :&= \frac{\int_{\{g<0\}} |g(t,v)|\langle v \rangle^{2}\text{d}v}{\int_{\{g\geq0\}} g(t,v)\langle v \rangle^{2}\text{d}v}=\frac{\int_{\{g<0\}} |g(t,v)|\langle v \rangle^{2}\text{d}v}{\int_{\Omega_{L}} g(t,v)\langle v \rangle^{2}\text{d}v - \int_{\{g<0\}} |g(t,v)|\langle v \rangle^{2}\text{d}v}\\
&\hspace{1cm}\leq \frac{L^{d/2+2}\varepsilon(T,L,N)}{\int_{\Omega_{L}} g_0(t,v)\langle v \rangle^{2}\text{d}v - L^{d/2+2}\varepsilon(T,L,N)}<\epsilon(g_{0})\,.
\end{align*}
Observe that we used the fact that the scheme conserves mass and energy and assumed that $\ k>1+2/\lambda$, so that  $L^{d/2+2}\varepsilon(T,L,N)$ vanishes as, both,  $L$ and then $N$ are chosen sufficiently large.  Therefore, for this choice of parameters $L\geq L_{0}(T,g_{0})$ and $N\geq N_{0}(T,L,g_{0})$, a continuation argument shows that the negative mass condition \eqref{nmass-ok} holds in fact in the whole interval $I_{1}$.  Thus, the \textit{a priori} estimates of Proposition \ref{propagation-moments-L2norms} hold in $I_{1}$ so that,
\begin{equation}\label{uniform-estimate-I1}
\|g(t)\|_{ L^{2}(\Omega_L) } \leq c^{0}_{k}(g_0),\qquad m_{k}(g(t)) \leq \mathfrak{c}^{k}_{1}(g_0) \,,\qquad \forall\;t\in I_{1}\,.
\end{equation}
Estimate \eqref{uniform-estimate-I1} shown that the set $\mathcal{X}/2$ is a stable set for the dynamics, thus, it allows us to uniquely extend the solution, by repeating the argument made for $I_{1}$, to the intervals $I_{i}:=[(i-1)/L^{a+b_{k}},i/L^{a+b_{k}}]$, with $i=1,2,\cdots$, until $[0,T]\subset \cup I_{i}$.  This proves global existence and uniqueness of the scheme.

Now, in the limit $N\rightarrow\infty$ one has that the sequence $\{g:=g^{N}\}\subset\mathcal{X}$.  Since,
\begin{equation*}
\big\|Q_{c}(f,f)(t)-Q_{c}(\tilde {f},\tilde{f})(t)\big\|_{L^{2}(\Omega_{L})}\leq C(c^{k}_{1},c^{0}_{2})\,L^{a}\|f(t) - \tilde{f}(t)\,\|_{L^{2}(\Omega_{L})}\,,\quad \forall\;f\,,\tilde{f}\in\mathcal{X}\,,
\end{equation*}
it follows from
\begin{equation*}
g(t)=g_{0} + \int^{t}_{0}Q_{c}(g,g)(s)\text{d}s\,,
\end{equation*}
that for any $N,M\geq N_0$ and $t\in[0,T]$
\begin{equation*}
\|g^{N}(t) - g^{M}(t)\|_{L^{2}(\Omega_{L})}\leq \|g^{N}_{0} - g^{M}_{0}\|_{L^{2}(\Omega_{L})} + C(c^{k}_{1},c^{0}_{2})\,L^{a}\int^{t}_{0}\|g^{N}(s) - g^{M}(s)\,\|_{L^{2}(\Omega_{L})}\text{d}s\,.
\end{equation*}
Thus, using Gronwall's lemma
\begin{equation*}
\|g^{N}(t) - g^{M}(t)\|_{L^{2}(\Omega_{L})}\leq \|g^{N}_{0} - g^{M}_{0}\|_{L^{2}(\Omega_{L})}e^{C(c^{k}_{1},c^{0}_{2})\,L^{a} T}\rightarrow 0\,,\quad\text{as}\quad N,M\rightarrow \infty\,.
\end{equation*}
Thus, $\{g^{N}\}$ is Cauchy and converges strongly to $\bar{g}$, the solution of the problem \eqref{limiteq} with initial condition $f_0=\lim_{N\rightarrow\infty}\Pi^{N}_{L}f_0$.
\end{proof}
\subsection{Uniform $H_{k}$ Sobolev regularity propagation}
In this section we work with functions in $H^{\alpha_0}(\Omega_L)$ and take multi-index $\alpha$ with $|\alpha|\leq\alpha_0$.  Recall that derivatives commute with the projection operator $\Pi^{N}_{2L}$, see \eqref{derivative_projection}, for functions in $H^{\alpha_0}_{0}(\Omega_{2L})$.  Therefore, distributing the derivatives in the arguments of the operator and using the estimates (\ref{extension_estimate}, \ref{extension-norm}) and \eqref{IPE}, yields
\begin{equation}\label{nafter}\begin{split}
\big\|\partial^{\alpha}&(\textbf{1}-\Pi^{N}_{2L})Q^{+}(\chi g,\chi g)\big\|_ {L^{2}(2\Omega_L)} \\
&= \big\|(\textbf{1}-\Pi^{N}_{2L})\partial^{\alpha}Q^{+}(\chi g,\chi g)\big\|_ {L^{2}(2\Omega_L)} \leq \frac{C\,L^{2(1+\lambda)}}{N^{(d-1)/2}}\|g\|^{2}_{H^{\alpha}(\Omega_{L})}\,,
\end{split}\end{equation}
where, we recall, that the constant $C:=C_{\chi}$ can be taken independent of $L\geq1.$

Next, in order to prove propagation of regularity let us fix $k\geq k_{*}\geq 2$ and $0\leq k'\leq k-1-\alpha_0(1+\lambda)$, and use an induction argument on the derivative order $|\alpha|$.  The initial step of the induction follows thanks to the propagation of $L^{2}_{k'}$-norms in Proposition \ref{propagation-moments-L2norms}.  For the case $|\alpha|\geq1$, assume the propagation of the $H^{|\alpha|-1}_{k'+(1+\lambda)}$-norms and differentiate equation \eqref{basic-equation} in velocity.  We arrive to
\begin{equation*}\begin{split}
\frac{\partial(\partial^{\alpha}g)}{\partial t}&=\partial^{\alpha}Q^{+}(\chi g,\chi g)- \partial^{\alpha}Q^{-}(g,\chi g)\\
&\qquad+\partial^{\alpha}\big(Q_c(g,g)-Q_{u}(g,g)\big) - \partial^{\alpha}\big(\textbf{1}-\Pi^{N}_{2L}\big)Q^{+}(\chi g,\chi g)\,.
\end{split}\end{equation*}
Multiply by $\partial^{\alpha}g \langle v \rangle^{2\lambda k'}$ and integrate in the velocity domain $\Omega_L$ to obtain
\begin{align}\label{controls}
\begin{split}
\tfrac{1}{2}\frac{\text{d}}{\text{d} t}\|\partial^{\alpha}g &\|^{2}_{L^{2}_{k'}(\Omega_L)} \leq \int_{\Omega_L}\big(\partial^{\alpha}Q^{+}(\chi g,\chi g)
- \partial^{\alpha}Q^{-}(g,\chi g)\big)\,\partial^{\alpha}g \langle v \rangle^{2\lambda k'} \\ &\ \ \ + \|\partial^{\alpha}g\|_{L^{2}_{k'}(\Omega_L)}\big\|\partial^{\alpha}\big(Q_c(g,g)-Q_{u}(g,g)\big)\big\|_{L^{2}_{k'}(\Omega_L)} \\
&\ \ \  + \|\partial^{\alpha}g\|_{L^{2}_{k'}(\Omega_L)}\big\|\partial^{\alpha}(\textbf{1}-\Pi^{N}_{2L})Q^{+}(\chi g,\chi g)\big\|_{L^{2}_{k'}(\Omega_L)}=:I_1+I_2+I_3\,.
\end{split}
\end{align}
Recall from Lemma \ref{l1} that the term $Q_c(g,g)-Q_{u}(g,g)$ is a second order polynomial, therefore its derivatives are at most a second order polynomial, thus Theorem \ref{t1} implies
\begin{equation}\label{ei2}\begin{split}
I_2 &\leq \|\partial^{\alpha}g\|_{L^{2}_{k'}(\Omega_L)}\Big(L^{\lambda k'}\big\|(\textbf{1}-\Pi^{N}_{2L})Q^{+}(\chi g,\chi g)\big\|_{L^{2}(\Omega_L)}\\
&\qquad \qquad + O_{d/2}\big(m_{k' +1}(g)m_{0}(g)+Z_{k' }\big)\Big).
\end{split}\end{equation}
Additionally, the term $I_3$ is controlled using \eqref{nafter}
\begin{equation}\label{ei3}
I_3 \leq \frac{L^{\lambda k' + 2(1+\lambda)}}{N^{(d-1)/2}} \|\partial^{\alpha}g\|_{L^{2}_{k'}(\Omega_L)}\|g\|^{2}_{H^{\alpha}(\Omega_{L})}\,.
\end{equation}
The term $I_1$ defined in \eqref{controls} can be controlled implementing the estimate introduced in \cite{BD} and used for the control of $H_{k'}$-norms in \cite[Theorem 3.5]{MV04}
\begin{align}\label{ei1}
\begin{split}
&I_1\leq C_{1}\left\|\partial^{\alpha} g\right\| _{L^{2}_{k'}(\Omega_L)}\|g\|^{2}_{H^{|\alpha|-1}_{k'+(1+1/\lambda)}(\Omega_{L})} - C(g_0)\left\|\partial^{\alpha} g\right\|^{2}_{L^{2}_{k'+1/2}(\Omega_L)}\\
&\qquad\leq C_{2}\left\|\partial^{\alpha} g\right\| _{L^{2}_{k'}(\Omega_L)} - C(g_0)\left\|\partial^{\alpha} g\right\|^{2}_{L^{2}_{k'+1/2}(\Omega_L)}\,,\\
&\qquad\qquad\qquad\text{where}\; C_{1}\|g\|^{2}_{H^{|\alpha|-1}_{k'+(1+1/\lambda)}(\Omega_{L})}\leq C_2\; \text{by induction.}
\end{split}
\end{align}
We obtain from inequalities \eqref{controls}, \eqref{ei2}, \eqref{ei3} \eqref{ei1} and \eqref{nafter}
\begin{equation*}
\frac{\text{d}}{\text{d} t}\|\partial^{\alpha}g\|_{L^{2}_{k'}(\Omega_L)}\leq C_2-\frac{C(g_{0})}{2}\|\partial^{\alpha} g\|_{L^{2}_{k'+1/2}(\Omega_L)} + \frac{L^{\lambda k' + 2(1+\lambda)}}{N^{(d-1)/2}} \|g\|^{2}_{H^{\alpha}(\Omega_{L})}.
\end{equation*}
Same inequality is valid for $\alpha=0$, therefore, it is concluded that
\begin{equation*}
\frac{\text{d}X}{\text{d} t}\leq C_2 - \frac{C(g_{0})}{2}X + \frac{L^{\lambda k' + 2(1+\lambda)}}{N^{(d-1)/2}} X^{2}.
\end{equation*}
where $X(t):=\|g\|_{H^{\alpha}_{k'}(\Omega_{L})}$.  From here, after taking $N\geq N_{0}(L,g_0)$ sufficiently large, it follows that
\begin{equation*}
X(t)\leq \max\big\{X(0),4C_{1}/C_{2}\big\}\,,\qquad t\in[0,T]\,.
\end{equation*}
Note that in each step of the induction one needs to add $(1+1/\lambda)$ moments, so that $\|g\|_{H^{|\alpha|-1}_{k'+(1+1/\lambda)}}$ is finite.  Having this in mind, let us state the result we just proved.  
\begin{prop}\label{H2propagation}
Fix $T>0$, $\alpha\geq0$, $k\geq k_{*}\geq2$ and $0\leq k'\leq k-1-\alpha(1+1/\lambda)$ and assume $g_0\in H^{\alpha}_{k'+\alpha(1+1/\lambda)}(\Omega_L)$.  Then, for any lateral size $L\geq L_{0}(T,g_0)$ there exists $N_0(T,L,g_0)>0$ such that
\begin{equation*}
\sup_{t\in[0,T]}\|g\|_{H^{\alpha}_{k'}(\Omega_L)}\leq \max\big\{\|g_0\|_{H^{\alpha}_{k'+\alpha|(1+1/\lambda)}(\Omega_L)},C_{k'}(g_0)\big\},\quad N\geq N_0\,.
\end{equation*}
\end{prop}
\section{$L^{2}_{k}$ and $H^{\alpha}_{k}$ error estimates}

We are now in position to write the error estimates for the spectral conservation scheme.  We start with errors in the $L^{2}_{k'}$-norm and, then, extend it to Sobolev norms.  Again, we start fixing $T>0$, the cut-off domain $L\geq L_{0}(T,g_0)$ and $N\geq N_{0}(T,L,g_0)$ sufficiently large so that $g$ exists in the interval $[0,T]$.  Here $k\geq k_{*}\geq2$ and $0\leq k'\leq k-1$ in order to meet the assumptions of Proposition \ref{propagation-moments-L2norms}.  From the identity
\begin{align}\label{E_0}
\begin{split}
Q(g,g) &= Q(\chi g,\chi g) \\
&\ \ \ + \big(Q((1-\chi)g,g) + Q(g,(1-\chi)g) + Q((1-\chi)g,(1-\chi)g)\big)\\
& =: Q(\chi g, \chi g) +E_{0}(g,g)\,,
\end{split}
\end{align}
and the definition of $Q_{u}$, one finds that
\begin{align}\label{E}
\begin{split}
Q_{u}(g,g) &= Q(g,g) - \big( E_{0}(g,g) + Q^{-}((1-\chi)g,\chi g) \big)\\
&= : Q(g,g) - E(g,g)\,.
\end{split}
\end{align}
Now, observe that subtracting the Boltzmann equation \eqref{singleEq} and its conserved projection approximation \eqref{fourierBTE} in $\Omega_L$ one obtains
\begin{align}\label{difference}
\begin{split}
\partial_{t}(f-g) &= Q(f,f)-Q_{c}(g,g) = \big(Q(f,f) - Q_{u}(g,g)\big)+\big(Q_{u}(g,g) - Q_{c}(g,g)\big)\\
& = \big(Q(f,f) - Q(g,g)\big)  + \big(Q_{u}(g,g) - Q_{c}(g,g)\big) + E(g,g)\,.
\end{split}
\end{align} 
Multiplying this equation by $(f-g) \langle v \rangle^{2\lambda k'}$ and integrating in $\Omega_L$
\begin{align*}
\tfrac{1}{2}\frac{\text{d}}{\text{d} t}&\|f-g\|^{2}_{L^{2}_{k'}(\Omega_L)}
=\int_{\Omega_L}\langle v \rangle^{2\lambda k'}(f-g)\big(Q(f,f) - Q(g,g)\big)\big)\,\text{d}v\\
&\hspace{-0.8cm}+\int_{\Omega_L} \langle v \rangle^{2\lambda k'}(f-g)\big(Q_{u}(g,g) - Q_{c}(g,g)\big)\,\text{d}v + \int_{\Omega_L} \langle v \rangle^{2\lambda k'}(f-g)E(g,g)\,\text{d}v=:I_1+I_2 + I_{3}\,.
\end{align*} 
The error term $I_{3}$, from the error term $E(g,g)$ in \eqref{E}, is simply controlled as
\begin{align}\label{I_{3}}
\begin{split}
I_{3}&\leq \|g\|_{L^{1}_{k'+1}(\mathbb{R}^{d})}\|(1-\chi)g\|_{L^{2}_{k'+1}(\mathbb{R}^{d})}\|f-g\|_{L^{2}_{k'}(\Omega_{L})} \leq O_{d/2+\lambda k''}\|f-g\|_{L^{2}_{k'}(\Omega_{L})}\\
\end{split}
\end{align}
where the last inequality holds provided the $L^{2}_{k'+1+d/2\lambda+k''}$ uniformly propagate.  Moreover, using Theorem \ref{t1} it follows that
\begin{multline*}
\big\|Q_{u}(g,g)-Q_c(g,g)\big\|_{L^{2}_{k'}(\Omega_L)}\\
\leq C_5\,L^{\lambda k'}\big\|(\text{1}-\Pi^{N}_{2L})Q^{+}(\chi g,\chi g)\big\|_{L^{2}(\Omega_L)} + O_{d/2+\lambda k''}m_{k'+1+k''}(g)m_{0}(g_0)\,.
\end{multline*}
Therefore, using Cauchy-Schwarz inequality and \eqref{IPE} inequality one controls the term $I_{2}$ as 
\begin{align}\label{I_{2}}
\begin{split}
I_2\leq \|f - & g\|_{L^{2}_{k'}{(\Omega_L)}}\bigg(\frac{C_{5}\,L^{\lambda k'}}{N^{(d-1)/2}}\|g\|^{2}_{L^{2}_{1+1/\lambda}(\Omega_{L})} + O_{d/2+\lambda k''}m_{k'+1+k''}(g)m_{0}(g_0)\bigg)\\
& = \|f-g\|_{L^{2}_{k'}{(\Omega_L)}}\bigg(O\big(L^{\lambda k'}/N^{(d-1)/2}\big) + O_{d/2+\lambda k''}\bigg)\,.
\end{split}
\end{align}
The term $I_{1}$ is more involved.  However, it is classical from the Boltzmann theory that the Dirichlet form of the linearized collision operator with polynomial weights is essentially nonpositive in the sense that 
\begin{align}\label{I_{1}}
\begin{split}
I_1=\tfrac{1}{2}&\int_{\mathbb{R}^{d}} - \int_{\mathbb{R}^{d}\setminus\Omega_{L}}\langle v \rangle^{2\lambda k'}(f-g)\big(Q(f+g,f - g) + Q(f - g,f + g)\big)\,\text{d}v \\
&\leq C_{k'}\|f-g\|^{2}_{L^{2}_{k'}(\mathbb{R}^{d})} + \big(\frac{c_{1}}{k'} + 2\|g_0\|_{L^{1}_{2\lambda^{-1}}}\,\epsilon - c_{2}  \big) \|f-g\|^{2}_{L^{2}_{k'+1/2}(\mathbb{R}^{d})}\\
&\hspace{2cm} + O_{d/2+\lambda k''}\|f+g\|_{L^{1}_{k'}(\mathbb{R}^{d})}\|f-g\|_{L^{2}_{k'}(\mathbb{R}^{d})}\|f-g\|_{L^{2}_{k'+1+d/2+k''}(\mathbb{R}^{d})}\\
&\leq C_{k'}\|f-g\|^{2}_{L^{2}_{k'}(\Omega_{L})} + O_{d/2+\lambda k''}\big(\|f-g\|_{L^{2}_{k'}(\Omega_{L})} + O_{d/2+\lambda k''}\big)\,.
\end{split}
\end{align}
The $\epsilon$-term, with $0<\epsilon<\epsilon(g_0)$, is added in the absorption (second) term to account for the fact that may be a set where $f+g$ is negative.  This is not a problem since this set is small $\{f+g<0\}\subset\{g<0\}$.  Here, $C_{k'}$ is a constant that depends on the moments $L^{1}_{k'+2}$ and $c_{i}:=c_{i}(f_{0},g_{0})$ depends only on the initial mass and some moment $2^{+}/\lambda$, see for instance \cite[Proposition 2.1]{DM04}.  In the last inequality, we are taking $k'$, and $L$, sufficiently large so that $c_{1}/k' + C_{k}/L^{\lambda} + 2\|g_0\|_{L^{1}_{2/\lambda}}\,\epsilon - c_{2} \leq  0$, which is achieved for any $\epsilon$ in the aforementioned range.  This estimate holds, of course, provided the $L^{2}_{k'+1+d/2\lambda + k''}$-norms propagate uniformly on $[0,T]$.  Defining $X(t):=\|f(t)-g(t)\|^{2}_{L^{2}_{k'}(\Omega_L)}$ and combining the estimates \eqref{I_{1}}, \eqref{I_{2}} and \eqref{I_{3}}
\begin{equation*}
\tfrac{1}{2}\frac{\text{d}X}{\text{d} t}(t)\leq C_{k'}X(t)+\Big(O\big(L^{\lambda k'}/N^{(d-1)/2}\big) + O_{d/2+\lambda k''}\Big)\sqrt{X} + O_{d+2\lambda k''}\,.
\end{equation*}
Thus, Gronwall's lemma implies
\begin{equation}\label{finalestimate}
\sup_{t\in[0,T]}\|f-g\|^{2}_{L^{2}_{k'}(\Omega_L)}\leq e^{C_{k'}T}\big(\|f_{0}-g_{0}\|^{2}_{L^{2}_{k'}(\Omega_L)} + O\big(L^{2 \lambda k'}/N^{(d-1)}\big) + O_{d+2\lambda k''}\big)\,.
\end{equation}
This proves the following theorem.
\begin{thm}[$L^{2}_{k}$-error estimate]\label{l2convergence}
Fix $k\geq k_{*}\geq2 $, $k''\geq0$, and $k(f_{0}) < k'\leq k-1-\frac{d^{+}}{2\lambda}-k''$ with $0\leq f_{0}\in L^{1}_{2}\cap L^{2}_{k}(\mathbb{R}^{d})$ be an initial datum and $f$ be the solution of the Boltzmann equation \eqref{singleEq}.  For any $T>0$ and cut-off domain $L(T,f_0)\geq L_0$ there exists $N_0(T,L,f_0)$ such that
\begin{equation*}
\sup_{t\in[0,T]}\|f - g \|_{L^{2}_{k'}(\Omega_L)}\leq e^{C_{k'}T}\big(\|f_{0}-g_{0}\|_{L^{2}_{k'}(\Omega_L)} + O\big(L^{\lambda k'}/N^{(d-1)/2}\big) + O_{d/2+\lambda k''}\big),\ \ N\geq N_0\,.
\end{equation*}
The factor $O_{d/2+\lambda k''}$ is defined by  \eqref{O_r}. The constants depend as $C_{k'}:=C_{k'}\big(\|f_0\|_{L^{2}_{k'}}\big)$.  In particular, the strong limit $\bar{g}$ of the sequence $\{g_{N}\}$ in $\mathcal{C}(0,T;L^{2}_{k}(\Omega_L))$ satisfies the same estimate. 
\end{thm}
We study next the improvement in the rate of convergence with respect to the number of modes $N$ of the approximating solutions towards the Boltzmann solution provided that the initial configuration is smooth and has at least initial mass and energy bounded.
\begin{thm}[$H^{\alpha}$-error estimates]\label{l2convergenceregularity}
Fix $k\geq k_{*}\geq 2$, $k''\geq0$, $\alpha_{0}\geq\alpha\geq0$, and $k(f_0)\leq k' \leq k-1- \alpha/2 - \frac{d^{+}}{2\lambda} - k''$ and let $0\leq f_{0} \in L^{1}_{2}\cap H^{\alpha}_{k}(\mathbb{R}^{d})$ be an initial datum and $f$ be the solution of the Boltzmann equation \eqref{singleEq}.  Fix $T>0$ and cut-off domain $L\geq L_0(T,f_0)$.  Then, there exists $N_{0}(T,L,f_0)$ such that
\begin{align}\label{T5.3-estimate}
\begin{split}
\sup_{t\in[0,T]}&\|f-g\|_{H^{\alpha}_{k'}(\Omega_L)} \leq e^{ \alpha C_{k'}T}\big(\|f_{0}-g_{0}\|_{H^{\alpha}_{k'+\alpha/2}(\Omega_L)} \\
&\hspace{2.5cm}+ O\big(L^{\lambda(k'  +\alpha/2) + \alpha_0}/N^{(d-1)/2 + \alpha_0}\big) + O_{d/2+\lambda k''}\big)\,,\quad N\geq N_0\,.
\end{split}
\end{align}
with the factor $O_{d/2+\lambda k''}$  defined  as in   \eqref{O_r}.
\end{thm}
\begin{proof}
Fix $\alpha_{0}\geq0$, $k\geq k_{*}\geq 2$, $k''\geq0$ and $k(f_0)\leq k' \leq k-1- \alpha_0/2 - \frac{d+1}{2\lambda} - k''$.  Now, we perform similar computations to those of the error estimates for $L^{2}_{k'}$, though, avoiding to resource to the values of $g$ near $\partial\Omega_{L}$.  Thus, we write, for $Q_{u}$ and $Q_{c}$ defined in \eqref{fourierBTE-0} and  \eqref{fourierBTE}, respectively, 
\begin{align}\label{difference-sob}
\begin{split}
Q(f,f) - Q_{c}(g,g) &= 
Q(\chi f, \chi f) - Q(\chi g, \chi g) - Q^{-}((1-\chi)(f-g),\chi g) \\
&\ \ \ + \big(Q_{u}(g,g) - Q_{c}(g,g)\big) + \tilde{E}(f,f)\,.
\end{split}
\end{align}
Here $\tilde{E}(f,f):=E_{0}(f,f) + Q^{-}((1-\chi)f,\chi g)$.  Thus, fixing any multi-index $\alpha$ with $|\alpha|\leq \alpha_{0}$, we apply the operator $\partial^{\alpha}$ to equation \eqref{difference-sob}, multiply it by $\partial^{\alpha}(f-g) \langle v \rangle^{2\lambda k'}$, and integrate in $\Omega_L$ to obtain
\begin{align*}
\tfrac{1}{2}&\frac{\text{d}}{\text{d} t}\|\partial^{\alpha}(f-g)\|^{2}_{L^{2}_{k'}(\Omega_L)}
=I^{\alpha}_1+I^{\alpha}_2 + I^{\alpha}_{3}\,,
\end{align*}
where,
\begin{align*}
I^{\alpha}_1&:= \int_{\Omega_L}\langle v \rangle^{2\lambda k'}\partial^{\alpha}(f-g)\,\big(\partial^{\alpha}Q(\chi f,\chi f) - \partial^{\alpha} Q(\chi g,\chi g) - \partial^{\alpha}Q^{-}((1-\chi)(f-g),\chi g)\big)\text{d}v\,,\\
I^{\alpha}_2&:= \int_{\Omega_L} \langle v \rangle^{2\lambda k'}\partial^{\alpha}(f-g)\,\partial^{\alpha}\big(Q_{u}(g, g)-Q_{c}(g,g)\big)\text{d}v\,,\\
I^{\alpha}_{3}&:= \int_{\Omega_L} \langle v \rangle^{2\lambda k'}\partial^{\alpha}(f-g)\,\partial^{\alpha}\tilde{E}(f,f)(v)\text{d}v\,.
\end{align*} 
Regarding the term $I^{\alpha}_{2}$, we directly use Theorem \ref{t1} to have
\begin{align*}
\big\|\partial^{\alpha}\big(Q_{u}(g,& g) - Q_c(g,g)\big)\big\|_{L^{2}_{k'}(\Omega_L)}\lesssim \big\| Q_{u}(g,g) - Q_c(g,g) \big\|_{L^{2}_{k'}(\Omega_L)}\\
&\leq C_5\,L^{\lambda k'}\big\|(\text{1}-\Pi^{N}_{2L})Q^{+}(\chi g,\chi g)\big\|_{L^{2}(\Omega_L)} + O_{d/2+\lambda k''}m_{k'+1+k''}(g)m_{0}(g_0)\,.
\end{align*}
Therefore, using Cauchy-Schwarz inequality, inequality \eqref{IPE}, and Lemma \ref{lemmaProjectionEstimate} implies 
\begin{align*}
I^{\alpha}_2& \leq \|\partial^{\alpha}(f-g)\|_{L^{2}_{k'}{(\Omega_L)}}\bigg(\frac{C_{5}\,L^{\lambda k' + \alpha_0 }}{N^{(d-1)/2+\alpha_{0} }}\|g\|^{2}_{H^{\alpha_{0}}_{1+1/\lambda}(\Omega_{L})} + O_{d/2+\lambda k''}m_{k'+1+k''}(g)m_{0}(g_0)\bigg)\\
&\hspace{1cm} = \|\partial^{\alpha}(f-g)\|_{L^{2}_{k'}{(\Omega_L)}}\bigg(O\big(L^{\lambda k' + \alpha_0}/N^{(d-1)/2 + \alpha_0}\big) + O_{d/2+\lambda k''}\bigg)\,.
\end{align*}
The term $I^{\alpha}_{3}$, containing the error term $\tilde{E}(f,f)$, is simply controlled as
\begin{align*}
I^{\alpha}_{3}& \leq C_{\alpha}\sum_{\alpha'+\beta'=\alpha}\Big(\|\partial^{\alpha'}(\chi g)\|_{L^{1}_{k'+1}(\mathbb{R}^{d})}+\|\partial^{\alpha'}f\|_{L^{1}_{k'+1}(\mathbb{R}^{d})}\Big)\|\partial^{\beta'}((1-\chi)f)\|_{L^{2}_{k'+1}(\mathbb{R}^{d})}\|\partial^{\alpha}(f-g)\|_{L^{2}_{k'}(\Omega_{L})}\\
&\leq C_{\alpha}\,\|(1-\chi)f\|_{H^{\alpha}_{k'+1}(\mathbb{R}^{d})}\|\partial^{\alpha}(f-g)\|_{L^{2}_{k'}(\Omega_{L})} \leq O_{d/2+\lambda k''}\|\partial^{\alpha}(f-g)\|_{L^{2}_{k'}(\Omega_{L})}\,,
\end{align*}
where the last inequality holds provided the $H^{\alpha}_{k'+1+d/2\lambda+k''}$-norm of $f$ uniformly propagates.  Finally, for the term $I^{\alpha}_{1}$ one checks that
\begin{align*}
\partial^{\alpha}&\big(Q(\chi f,\chi f) - Q(\chi g,\chi g) \big) = \tfrac{1}{2}\partial^\alpha\big(Q(\chi (f-g),\chi (f+g)) + Q(\chi(f+g),\chi (f-g))\big) \\
&=\tfrac{1}{2}\big(Q\big(\partial^{\alpha}(\chi (f-g)),\chi (f+g)\big) + Q\big(\chi(f+g), \partial^{\alpha}(\chi (f-g))\big)\big) + \sum_{\alpha'+\beta'<\alpha}\Gamma^{\alpha}_{\alpha',\beta'}\,,
\end{align*}
where
\begin{equation*}
\Gamma^{\alpha}_{\alpha',\beta'} := \tfrac{C^{\alpha}_{\alpha',\beta'}}{2}\big(Q\big(\partial^{\alpha'}(\chi (f-g)),\partial^{\beta'}\chi (f+g)\big) + Q\big(\partial^{\beta'}\chi(f+g), \partial^{\alpha'}(\chi (f-g))\big)\big)\,.
\end{equation*}
Observe that
\begin{align*}
\big\|\Gamma^{\alpha}_{\alpha',\beta'}&\big\|_{L^{2}_{k'-1/2}(\Omega_{L})}\leq C\,\|\partial^{\alpha'}\chi(f-g)\|_{L^{2}_{k'+1/2}(\mathbb{R}^{d})}\|\partial^{\beta'}\chi(f+g)\|_{L^{1}_{k'+1/2}(\mathbb{R}^{d})}\\
&\leq C\,\|\partial^{\alpha'}\chi(f-g)\|_{L^{2}_{k'+1/2}(\mathbb{R}^{d})} = C\,\|\partial^{\alpha'}(f-g)\|_{L^{2}_{k'+1/2}(\Omega_{L})}+O_{d/2+\lambda k''}\,,
\end{align*}
provided the $H^{\alpha}_{k'+1+d/2\lambda+k''}$-norms are propagated.  Therefore,
\begin{multline*}
\int_{\Omega_L} \langle v \rangle^{2\lambda k'}\partial^{\alpha}(f-g)\, \sum_{\alpha'+\beta'<\alpha}\Gamma^{\alpha}_{\alpha',\beta'}\text{d}v\\
\leq \|\partial^{\alpha}(f-g)\|_{L^{2}_{k'+1/2}(\Omega_{L})}\big(C_{\alpha}\sum_{|\alpha'|<|\alpha|}\|\partial^{\alpha'}\chi(f-g)\|_{L^{2}_{k'+1/2}(\Omega_{L})}+O_{d/2+\lambda k''}\big)\,.
\end{multline*}
Now, the leading order term in $I^{\alpha}_{1}$ is the Dirichlet form of the linearized Boltzmann operator with $\partial^{\alpha}\chi(f-g)$.  Thus, similar to what was done in the $L^{2}_{k'}$ error estimate, it follows that
\begin{align*}
\begin{split}
I^{\alpha}_1& \leq C_{k'}\|\partial^{\alpha}(f-g)\|^{2}_{L^{2}_{k'}(\mathbb{R}^{d})} + \big(\frac{c_{1}}{k'} + \epsilon - c_{2}\big) \|\partial^{\alpha}(f-g)\|^{2}_{L^{2}_{k'+1/2}(\mathbb{R}^{d})} + O_{d/2+k''}\|\partial^{\alpha}(f-g)\|_{L^{2}_{k'}(\mathbb{R}^{d})}\\
&\hspace{2cm} + C_{\alpha}\sum_{|\alpha'|<|\alpha|}\|\partial^{\alpha'}\chi(f-g)\|^{2}_{L^{2}_{k'+1/2}(\Omega_{L})}+O_{d+2k''} \leq C_{k}\|f-g\|^{2}_{L^{2}_{k'}(\Omega_{L})}\\
& + O_{d/2+\lambda k''}\big(\|\partial^{\alpha}(f-g)\|_{L^{2}_{k'}(\Omega_{L})} + O_{d/2+\lambda k''}\big) + C_{\alpha}\sum_{|\alpha'|<|\alpha|}\|\partial^{\alpha'}(f-g)\|^{2}_{L^{2}_{k'+1/2}(\Omega_{L})} + O_{d+2\lambda k''}\,.
\end{split}
\end{align*}
Accordingly, this holds provided the $H^{\alpha}_{k'+1/2+d/2\lambda+k''}$-norms propagate uniformly on $[0,T]$.  Additionally, in obtaining this estimate we have used the term with $\partial^{\alpha}Q^{-}((1-\chi)(f-g),\chi g)$ to complete the $L^{2}_{k'+1/2}$- absorbing norm in the whole $\mathbb{R}^{d}$.  As a consequence, defining $X^{\alpha}(t):=\|\partial^{\alpha}(f(t)-g(t))\|^{2}_{L^{2}_{k'}(\Omega_L)}$ and combining the estimates for $I^{\alpha}_1$, $I^{\alpha}_2$, and $I^{\alpha}_{3}$
\begin{align*}
\tfrac{1}{2}\frac{\text{d}X^{\alpha}}{\text{d} t}(t)\leq C_{k}X^{\alpha}(t) + & O\big(L^{\lambda k' + \alpha_0}/N^{(d-1)/2 + \alpha_0}\big)\sqrt{X^{\alpha}} \\
&+ O_{d/2+\lambda k''} + C_{\alpha}\sum_{|\alpha'|<|\alpha|}\|\partial^{\alpha'}(f-g)\|^{2}_{L^{2}_{k'+1/2}(\Omega_{L})}\,.
\end{align*}
Thus, Gronwall's lemma implies
\begin{align*}
X^{\alpha}(t) \leq e^{2C_{k}T}&\Big(X^{\alpha}(0) + O\big(L^{2\lambda k' + 2\alpha_0}/N^{d-1 + 2\alpha_0}\big)\\
& + O_{d+2\lambda k''} + C_{\alpha}\sum_{|\alpha'|<|\alpha|}\sup_{t\in[0,T]}\|\partial^{\alpha'}(f-g)(t)\|^{2}_{L^{2}_{k'+1/2}(\Omega_{L})}\Big)\,.
\end{align*}
Estimate \eqref{T5.3-estimate} follows by iteration of this formula on the multi-index order $|\alpha|=1,2,\cdots \alpha_0$, using Theorem \ref{l2convergence} as starting point.
\end{proof}

{\section{Long time behavior}
In this final section we address  the long time behavior for the semi-discrete problem given by the conservative spectral  scheme approximating the space homogeneous elastic Boltzmann equation for hard potentials with integrable angular cross section.  \\
Thus, we start by setting $g=\mathcal{M}_0 + h$, where $h:=g-\mathcal{M}_0$ is the perturbation from the global Maxwellian equilibrium defined in \eqref{eqMax}.
 Note that under this linearization
\begin{equation*}
Q_{c}(g,g) = Q_{c}(\mathcal{M}_0,\mathcal{M}_0) + Q_{c}(\mathcal{M}_0,h) + Q_{c}(h,\mathcal{M}_0) + Q_{c}(h,h)\,.
\end{equation*}
Introduce then the linear operators
\begin{align*}
\mathcal{L}_{c}(h) :&= Q_{c}(\mathcal{M}_0,h) + Q_{c}(h,\mathcal{M}_0)\,,\\
\mathcal{L}(\chi h) :&= Q(\mathcal{M}_0,\chi h) + Q(\chi h,\mathcal{M}_0)\,.
\end{align*}
The reader recognizes the latter $\mathcal{L}$ as the linearized Boltzmann operator.  With the estimations we have performed in previous section, it is clear that
\begin{align*}
\|\chi Q_{c}(\mathcal{M}_0,\mathcal{M}_0) \|_{H^{\alpha}_{k}(\mathbb{R}^{d})}\leq \text{O}\big( L^{\lambda k}/N^{\frac{d-1}{2}}\big) + \text{O}\big( 1/L^{\lambda k}\big)\,,\\
\|\chi \mathcal{L}_{c}(h) - \mathcal{L}(\chi h) \|_{H^{\alpha}_{k}(\mathbb{R}^{d})}\leq \text{O}\big( L^{\lambda k}/N^{\frac{d-1}{2}}\big) + \text{O}\big( 1/L^{\lambda k}\big)\,,\\
\|\chi Q_{c}(h,h) - Q(\chi h, \chi h) \|_{H^{\alpha}_{k}(\mathbb{R}^{d})}\leq \text{O}\big( L^{\lambda k}/N^{\frac{d-1}{2}}\big) + \text{O}\big( 1/L^{\lambda k}\big)\,.
\end{align*}
For the last two estimates we need $h$, thus $g$, having $\alpha$ derivatives and $2k$-moments in $\Omega_{L}$.  This, of course, is guaranteed by the results of Section 5 as long as the negative mass in $g$ is small, $\epsilon\leq\epsilon(g_0)$.  As a consequence,
\begin{equation}\label{sg}
\frac{\text{d}}{\text{d}t}\chi h = \mathcal{L}(\chi h) + Q(\chi h,\chi h) + \mathcal{R}(h)\,,
\end{equation}
where the remainder is of size $\|\mathcal{R}(h)\|_{H^{\alpha}_{k}(\mathbb{R}^{d})}\leq \text{O}\big( L^{\lambda k}/N^{\frac{d-1}{2}}\big) + \text{O}\big( 1/L^{\lambda k}\big)$.  Now, classical estimates on the Boltzmann operator and interpolation shows that
\begin{equation*}
\|Q(\chi h,\chi h)\|_{H^{\alpha}_{k}(\mathbb{R}^{d})}\leq C_{k}\|\chi h\|^{3/2}_{H^{\alpha}_{k}(\mathbb{R}^{d})}\,,
\end{equation*}
where the constant $C_{k}$ depends on $k'$-moments and the $H^{\alpha}_{k'}$-norm of $h$,  for some $k' \geq k+2\lambda + d$.  Furthermore, the linearized Boltzmann operation has spectral gap, say $\nu>0$, in $H^{\alpha}_{k}$.  See for example reference \cite{Des-Vil-2005, Mcon}.  Thus, we can integrate \eqref{sg} to obtain that
\begin{equation*}
\chi h(t) = \chi h_0 + \int^{t}_{0}e^{\mathcal{L}(t-s)}Q(\chi h,\chi h)(s)\text{d}s + \int^{t}_{0}e^{\mathcal{L}(t-s)}\mathcal{R}(h)(s)\text{d}s\,.
\end{equation*}  
Since, the remainder $\mathcal{R}(h)$ may not have zero mass, momentum, and energy, we apply the operator $1-\pi$, where $\pi$ is the standard projection on the Boltzmann null space in $H^{\alpha}_{k}(\mathbb{R}^{d})\subset L^{1}_{2/\lambda}(\mathbb{R}^{d})$, which is given by
\begin{equation*}
\pi h = \sum_{\phi\in\{1,v_{1},\cdots,v_{d},|v|^{2}\}}\int_{\mathbb{R}^{d}}h(v) \phi(v) \text{d}v\,\phi(v) \mathcal{M}(v)\,,\ \ \mathcal{M}\;\text{ is the normalized Maxwellian}.
\end{equation*} 
Using the fact that the semigroup and $\pi$ commutes, one has
\begin{equation*}
(1-\pi)\chi h(t) = (1-\pi)\chi h_0 + \int^{t}_{0}e^{\mathcal{L}(t-s)}Q(\chi h,\chi h)(s)\text{d}s + \int^{t}_{0}e^{\mathcal{L}(t-s)}(1-\pi)\mathcal{R}(h)(s)\text{d}s\,,
\end{equation*}  
where we used that $(1-\pi)Q(\cdot,\cdot) = Q(\cdot,\cdot)$.  Thus, applying the $H^{\alpha}_{k}$-norm we conclude that
\begin{align}\label{sg0.0}
\begin{split}
\|(1-\pi)\chi h(t)\|_{H^{\alpha}_{k}(\mathbb{R}^{d})} \leq \|(1-\pi)\chi h_{0}\|_{H^{\alpha}_{k}(\mathbb{R}^{d})} + \frac{1}{\nu}&\Big(\text{O}\big( L^{\lambda k}/N^{\frac{d-1}{2}}\big) + \text{O}\big( 1/L^{\lambda k}\big) \Big) \\
&+ C_{k}\int^{t}_{0}e^{-\nu(t-s)}\|\chi h(s)\|^{3/2}_{H^{\alpha}_{k}(\mathbb{R}^{d})}\text{d}s\,. 
\end{split}
\end{align}
Now, the conservation routine grants that $\pi h(t) = 0$ for any $t\geq0$.  Then,
\begin{equation*}
\|\pi \chi h\|_{H^{\alpha}_{k}(\mathbb{R}^{d})} = \|\pi (1- \chi) h\|_{H^{\alpha}_{k}(\mathbb{R}^{d})} \leq C_{k,\alpha} \| (1-\chi)h\|_{L^{1}_{k}(\mathbb{R}^{d})} = O\big(1/L^{\lambda k}\big)\,.
\end{equation*}
As a consequence,
\begin{equation}\label{sg0.00}
\|(1-\pi)\chi h(t)\|_{H^{\alpha}_{k}(\mathbb{R}^{d})} = \|\chi h(t)\|_{H^{\alpha}_{k}(\mathbb{R}^{d})} + O\big(1/L^{\lambda k}\big) = \| h(t) \|_{H^{\alpha}_{k}(\Omega_{L})} + O\big(1/L^{\lambda k}\big)\,.
\end{equation}
Thus, estimates \eqref{sg0.0}, \eqref{sg0.00}, and \eqref{extension_estimate} leads to the control
\begin{align}\label{sg0}
\begin{split}
\|h(t)\|_{H^{\alpha}_{k}(\Omega_{L})} \leq \|h_{0}\|_{H^{\alpha}_{k}(\Omega_{L})} + \frac{1}{\nu}&\Big(\text{O}\big( L^{\lambda k}/N^{\frac{d-1}{2}}\big) + \text{O}\big( 1/L^{\lambda k}\big) \Big) \\
&+ C_{k}\int^{t}_{0}e^{-\nu(t-s)}\|h(s)\|^{3/2}_{H^{\alpha}_{k}(\Omega_{L})}\text{d}s=:Y(t)\,. 
\end{split}
\end{align}
Observing that
\begin{align*}
Y'(t) = C_{k}\,\|h(t)\|^{3/2}_{H^{\alpha}_{k}(\Omega_{L})} - \nu\,C_{k}\int^{t}_{0}e^{-\nu(t-s)}\|h(s)\|^{3/2}_{H^{\alpha}_{k}(\Omega_{L})}\text{d}s\,,
\end{align*}
one concludes, using \eqref{sg0}, that
\begin{equation}\label{stability-ode} 
Y'(t) + \nu Y(t) \leq C_{k}Y^{3/2}(t) + \nu\|h_{0}\|_{H^{\alpha}_{k}(\Omega_{L})} + \text{O}\big( L^{\lambda k}/N^{\frac{d-1}{2}}\big) + \text{O}\big( 1/L^{\lambda k}\big)\,.
\end{equation}
This estimate tell us that if
\begin{equation}\label{sg1}
C_{k}\sqrt{Y_0} = C_{k}\sqrt{\|h_{0}\|_{H^{\alpha}_{k}(\Omega_{L})} + \frac{1}{\nu}\Big(\text{O}\big( L^{\lambda k}/N^{\frac{d-1}{2}}\big)  + \text{O}\big( 1/L^{\lambda k}\big) \Big) }\ll \nu\,,
\end{equation}
then
\begin{equation}\label{sg2}
\|h(t)\|_{H^{\alpha}_{k}(\Omega_{L})} \leq Y(t)\lesssim \|h_{0}\|_{H^{\alpha}_{k}(\Omega_{L})} + \frac{1}{\nu}\Big(\text{O}\big( L^{\lambda k}/N^{\frac{d-1}{2}}\big) + \text{O}\big( 1/L^{\lambda k}\big) \Big)\,,\quad t>0\,.
\end{equation}
This proves the following local stability estimate for the conservative semi-discrete solution.
\begin{prop}[Local stability for the semi-discrete scheme]\label{lstab}
Fix $\alpha_0\geq0$ and let $g_0\in H^{\alpha_0}_{2k}(\Omega_{L})$, with $k\geq k_{*} > 1 + \frac{d}{2\lambda}$, be a initial datum for the semi-discrete problem.  Assume that $\|g_0 - \mathcal{M}_0\|_{H^{\alpha}_{k}(\Omega_{L})}\leq \delta/2$, for $0<\delta\ll \min\{\nu, \epsilon(g_0)\}$. Then, there exist a lateral size $L_{0}(g_0,\nu)>0$, and a number of modes $N_0(g_0, L_{0},\nu)$ such that for any $\alpha\leq\alpha_0$
\begin{equation*}
\sup_{ t \geq0 } \|g - \mathcal{M}_0 \|_{H^{\alpha}_{k}(\Omega_L)} \leq \delta\,, \quad L\geq L_0\,,\; N\geq N_0\,,
\end{equation*}
where $\mathcal{M}_0$ is the Maxwellian having the same mass, momentum and energy of the initial configuration $g_0$.
\end{prop}
\begin{proof}
The result follows from the aforementioned discussion noticing that \eqref{sg2} is valid provided $L$ is taken first sufficiently large and then $N:=N(L)$, in a way that $\eqref{sg1}$ is satisfied.  Since the constant $C_{k}$ depends on propagations of moments and the norm $H^{\alpha_{0}}_{k}$, the validity of \eqref{sg2} holds provided the negative mass of $g$ is small.  However, this is clear since
\begin{equation*}
\|g^{-}\|_{L^{2}_{k}(\Omega_{L})}\leq \|(g-\mathcal{M}_{0})\text{1}_{\{g\leq0\}}\|_{L^{2}_{k}(\Omega_{L})}\leq \delta \ll \epsilon(g_0)\,.
\end{equation*}
\end{proof}
As a corollary of the error estimates and the local stability of the scheme, exponential relaxation to the Maxwellian equilibrium follows in Lebesgue and Sobolev norms.  Indeed, using the classical asymptotic Boltzmann theory \cite{Des-Vil-2005, Mcon}  for variable hard potentials
\begin{equation*}
\| f - \mathcal{M}_0 \|_{H^{\alpha}_{k}(\mathbb{R}^{d})} \leq C_{k}\,\|f_{0}\|_{H^{\alpha}_{k}(\mathbb{R}^{d})}\,e^{- \nu t}\,,
\end{equation*}
where  $\nu>0$ is the spectral gap of the linearized Boltzmann equation.  Thus, for any $\delta>0$ we can choose
\begin{equation}\label{me0}
T(\delta):=\ln\bigg( \frac{ 4\,C_{k}\,\|f_{0}\|_{H^{\alpha}_{k}(\mathbb{R}^{d})}}{\delta}\bigg)^{1/\nu}\,,\quad\text{so that}\quad
\sup_{t\geq T(\delta)/2}\| f - \mathcal{M}_0  \|_{H^{\alpha}_{k}(\mathbb{R}^{d})} \leq \delta/4\,.
\end{equation}
\begin{thm}[Convergence to the Maxwellian equilibrium]\label{cf} 
Fix $\alpha_0\geq0$ and let $f_0\in H^{\alpha_0}_{2k}(\mathbb{R}^{d})$, with $k\geq k_{*} > 1 + \frac{d}{2\lambda}$,  be a initial datum.  Then, for every $0<\delta\ll \min\{\nu,\epsilon(g_0)\}$ there exist a lateral size $L_{0}(f_0)>0$, and a number of modes $N_0(L,f_0)$ such that for any $\alpha\leq\alpha_0$
\begin{equation*}
\sup_{ t \geq T(\delta)/2 } \|g -\mathcal{M}_0  \|_{H^{\alpha}_{k}(\Omega_L)} \leq \delta\,, \quad L\geq L_0\,,\; N\geq N_0\,,
\end{equation*}
where $\mathcal{M}_0$ is the Maxwellian having the same mass, momentum and energy of the initial configuration $f_0$.
\end{thm}
\begin{proof}
Letting $T=T(\delta)$ in Theorem \ref{l2convergence} for the case $\alpha_0=0$ or Theorem \ref{l2convergenceregularity} for the case $\alpha_0>0$, one concludes that there exist a lateral size $L_0(T(\delta),f_0)$ and number of modes $N_0(T(\delta),L,f_0)$ such that
\begin{equation}\label{me}
\sup_{t\in [0,T]}\|f - g\|_{H^{\alpha}_{k}(\Omega_L)} \leq \delta/4\,,\quad L\geq L_{0}\,,\;N\geq N_0\,.
\end{equation}
Using triangle inequality with \eqref{me0} and \eqref{me} one has that
\begin{equation*}
\sup_{ t \in [T(\delta)/2,T(\delta)] } \|g -\mathcal{M}_0 \|_{H^{\alpha}_{k}(\Omega_L)} \leq \delta/2\,, \quad L\geq L_0\,,\; N\geq N_0\,.
\end{equation*}
The result follows after invoking the local stability result of Proposition \ref{lstab}.
\end{proof}
\noindent
\begin{rem}
Since the relaxation of the Boltzmann solution $f(t,v)$ is exponentially fast for variable hard potentials, the simulation times are relatively short as noticed in previous proof.  This makes conservative schemes very stable even when using relatively small working domains and number of modes. 
\end{rem}
\noindent\textbf{Completion of  proof of Theorem \ref{CT}.} We just need to discuss the time uniform nature of the constants appearing in the error estimates.  We first observe that  the conservative spectral scheme follows the nonlinear dynamics of the Boltzmann equation in the time range $[0, T(\delta)/2]$.  Next, for $t\geq T(\delta)/2$, the dynamics is relaxed around the thermal equilibrium, so that, it is controlled by the linear evolution.  Hence,
\begin{equation*}
\|f-g\|_{H^{\alpha}_{k}(\Omega_L)} = \|f- \mathcal{M}_0\|_{H^{\alpha}_{k}(\Omega_L)} + \| g -\mathcal{M}_{0} \|_{H^{\alpha}_{k}(\Omega_L)} \leq 2\delta\,,\quad \text{for} \quad t\geq T(\delta)/2\,.
\end{equation*}  
As a consequence, in the long run, $f-g$ is estimated by the minimum between estimate \eqref{T5.3-estimate} evaluated at $T(\delta)/2$ and $2\delta$.  As a consequence, we conclude 
\begin{align*}
\sup_{t\geq0}&\|f-g\|_{H^{\alpha}_{k}(\Omega_L)}\\
&\leq e^{ \alpha C_{k}T(\delta)}\Big(\|f_{0}-g_{0}\|_{H^{\alpha}_{k+\alpha/2}(\Omega_L)} + O\big(L^{\lambda(k  +\alpha/2) + \alpha_0}/N^{(d-1)/2 + \alpha_0}\big) + O_{d/2+\lambda k}\Big)\,,
\end{align*}
for $L\geq L_0(T(\delta),f_{0})$, $N\geq N_{0}(T(\delta),L_{0},f_{0})$   and  the term $O_{d/2+\lambda k}\big)$ as defined in \eqref{O_r}.  

Recalling \eqref{me0}, note that 
\begin{equation*}
e^{ \alpha C_{k}T(\delta)} \sim \bigg(\frac{4C_{k}\|f_0\|_{H^{\alpha}_{k}(\Omega_L)} }{\delta}\bigg)^{\alpha C_{k}/\nu}\,.
\end{equation*}  
The proof of Theorem \ref{CT} is concluded after minimizing in $\delta>0$, which gives $\theta=\alpha C_{k}/\nu$ in items 2. and 3. 
\qed}
%
%
\section{Appendix}
\subsection{Shannon Sampling Theorem}
The following result is an extension of the standard approximation estimate for regular functions by Fourier series expansions, \textit{Shannon Sampling Theorem}, to $H^{\alpha}(\Omega_L)$ space. We include here the result for completeness of the reading.
\begin{lem}[Fourier Approximation Estimate]\label{lemmaProjectionEstimate}
Let $g \in H^{\alpha}(\Omega_L)$, then
\begin{equation}\label{projectionEstimate}
\| (\textbf{1} - \Pi^N_{L}) g \|_{L^2(\Omega_L)} \leq \frac{1}{(\sqrt{2\pi})^{d}}\left(\frac{L}{2\pi N}\right)^{\alpha} \| g \|_{H^{\alpha}(\Omega_L)} \,.
\end{equation}
\end{lem}
\begin{proof}
Parseval's relation gives
\begin{equation*}
\| (\textbf{1} - \Pi^N_{L}) g \|_{L^2(\Omega_L)} = \sqrt{ \sum_{k> N} | \widehat{g}(\zeta_k) |^2 }\,.
\end{equation*}
Furthermore, properties of the Fourier transform implies
\begin{equation*}
| \widehat{g}(\zeta_k) | = \frac{1}{(\sqrt{2\pi})^{d}} \frac{\big|\widehat{D^{\alpha}g}(\zeta_k)\big|}{\prod_{j=1}^{d}|(\zeta^j_k)^{\alpha_j}|}\,.  
\end{equation*}
Therefore,
\begin{align*}
\sum_{k > N} | \widehat{g}_N(\zeta_k) |^2 &= \frac{1}{(2\pi)^d} \sum_{k> N} \frac{\big|\widehat{D^{\alpha}g}(\zeta_k) \big|^2}{\prod_{j=1}^{d}|(\zeta^j_k)^{\alpha_j}|^2}  \leq  \frac{1}{(2\pi)^d} \frac{\sum_{k> N} \big| \widehat{D^{\alpha}g}(\zeta_k) \big|^2}{\prod_{j=1}^{d}|(\zeta^j_{N})^{\alpha_j}|^2} \, .
\end{align*}
Observe that the sum in last inequality equals the $L^2$-norm square of $D^{\alpha}g-\Pi^{N}D^{\alpha}g$, therefore, 
\begin{align*}
\sum_{k> N} | \widehat{g}_N(\zeta_k) |^2 & \leq  \frac{1}{(2\pi)^d} \frac{\big\|D^{\alpha}g-\Pi^{N}D^{\alpha}g\big\|^2_{L^2(\Omega_L)}}{\prod_{j=1}^{d}|(\zeta^j_{N})^{\alpha_j}|^2}  \leq \frac{1}{(2\pi)^d} \frac{\| D^{\alpha}g \|^2_{L^2(\Omega_L)}}{\prod_{j=1}^{d}|(\zeta^j_{N})^{\alpha_j}|^2} \,.
\end{align*}
Conclude recalling the definition of $\zeta_{N} = \frac{2\pi N}{L}$.
\end{proof}
\subsection{Estimate on the decay of the collision operator}
\begin{thm}\label{moments}
The following estimate holds for any $k\geq0$ and $\lambda\in[0,2]$,
\begin{equation*}
\bigg|\int_{\mathbb{R}^{d}\setminus\Omega_L}Q(f,f)(v) \, dv\,\bigg |\leq O_{k}\big(m_{k+1}(f)m_{0}(f)+Z_{k}(f)\big).
\end{equation*}
The term $Z_{k}(f)$ is defined below in \eqref{Zk} and only depends on moments up to order $k$.  In particular one has
\begin{equation}\label{Zmk}
Z_{k}(f)\leq 2^{k}\, m_{1}(f)\,m_{k}(f).
\end{equation}
\end{thm}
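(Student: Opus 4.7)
The plan is to apply a simple cut-off trick to transfer the integral over $\mathbb{R}^d\setminus\Omega_L$ to an absolute weighted moment of the collision operator, and then estimate the gain and loss parts of $|Q(f,f)|$ separately via a Povzner-type bound. Since every $v\in\mathbb{R}^d\setminus\Omega_L$ satisfies $|v|\geq |v|_\infty>L$, I would first observe that $1\leq(|v|/L)^{\lambda k}$ on that region, and use the pointwise inequality $|Q(f,f)|\leq Q^{+}(|f|,|f|)+Q^{-}(|f|,|f|)$, which reduces the claim to the absolute estimate
$$\int_{\mathbb{R}^d}\bigl[Q^{+}(|f|,|f|)+Q^{-}(|f|,|f|)\bigr](v)\,|v|^{\lambda k}\,\mathrm{d}v\;\leq\;C\bigl(m_{k+1}(f)m_{0}(f)+Z_{k}(f)\bigr).$$
The factor $L^{-\lambda k}$ then supplies the $O_{k}$ in the statement.

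For the loss part I would use the explicit representation $Q^{-}(|f|,|f|)(v)=|f|(v)\bigl(|f|\ast|\cdot|^{\lambda}\bigr)(v)$ (which follows from \eqref{grad-cut-off}), combined with the elementary bound $|v-w|^{\lambda}\leq 2^{\lambda}(|v|^{\lambda}+|w|^{\lambda})$ valid for $\lambda\in[0,2]$; Fubini immediately yields
$$\int Q^{-}(|f|,|f|)|v|^{\lambda k}\,\mathrm{d}v\;\leq\;2^{\lambda}\bigl(m_{k+1}(f)m_{0}(f)+m_{k}(f)m_{1}(f)\bigr),$$
and since $m_{k}m_{1}$ is already (with coefficient one) among the summands of $Z_{k}(f)$ in \eqref{Zk0}, this loss term is absorbed into the right-hand side. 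For the gain part I would apply the weak form
$$\int Q^{+}(|f|,|f|)(v)\,|v|^{\lambda k}\,\mathrm{d}v\;=\;\int |f(v)||f(w)|\,|v-w|^{\lambda}\Bigl(\int_{\mathbb{S}^{d-1}}|v'|^{\lambda k}\,b(\hat u\cdot\sigma)\,\mathrm{d}\sigma\Bigr)\mathrm{d}v\,\mathrm{d}w,$$
and then invoke the Povzner-type inequality $|v'|^{2}\leq|v|^{2}+|w|^{2}$ (which follows from the elastic law $|v'|^{2}+|w'|^{2}=|v|^{2}+|w|^{2}$, and remains valid in the inelastic case by dissipation), so that the inner angular integral is controlled by $(|v|^{2}+|w|^{2})^{\lambda k/2}$ using the normalization \eqref{grad-cut-off}. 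For integer $k$ a binomial expansion gives
$$(|v|^{2}+|w|^{2})^{\lambda k/2}|v-w|^{\lambda}\;\leq\;C_{\lambda}\sum_{j=0}^{k}\binom{k}{j}\bigl(|v|^{\lambda(j+1)}|w|^{\lambda(k-j)}+|v|^{\lambda j}|w|^{\lambda(k-j+1)}\bigr);$$
integrating against $|f(v)||f(w)|$, the endpoint contributions ($j=0,k$) deliver $m_{k+1}(f)m_{0}(f)$, while the interior contributions reconstruct precisely $Z_{k}(f)$.

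The auxiliary inequality \eqref{Zmk} is then a short consequence of log-convexity of the moment sequence: Cauchy–Schwarz applied to $|f|\,\mathrm{d}v$ gives $m_{(r+s)/2}^{2}\leq m_{r}m_{s}$, so $r\mapsto\log m_{r}(f)$ is convex; for $0\leq j\leq k-1$, since $(j+1)+(k-j)=1+k$ and $[j+1,k-j]\subseteq[1,k]$, convexity yields $m_{j+1}m_{k-j}\leq m_{1}m_{k}$, and summing with $\sum_{j=0}^{k-1}\binom{k}{j}=2^{k}-1$ finishes the bound. The main obstacle is the gain-term expansion in Step 3: extracting the precise combinatorial structure of $Z_{k}$ requires the integer-valued binomial expansion of $(|v|^{2}+|w|^{2})^{\lambda k/2}$, so a careful splitting between boundary and interior indices is needed; for non-integer $\lambda k$, the coarser estimate $(|v|^{2}+|w|^{2})^{\lambda k/2}\leq 2^{\lambda k}(|v|^{\lambda k}+|w|^{\lambda k})$ together with $|v-w|^{\lambda}\leq 2^{\lambda}(|v|^{\lambda}+|w|^{\lambda})$ still produces only the four pair products $m_{k+1}m_{0}$ and $m_{k}m_{1}$, which is sufficient to match the stated form (with possibly weaker constants) in view of $m_{k}m_{1}\leq Z_{k}$.
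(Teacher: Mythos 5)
Your proposal is correct and follows essentially the same route as the paper's own proof: split $|Q|$ into gain and loss parts after pulling out the factor $L^{-\lambda k}$ via $1\leq(|v|/L)^{\lambda k}$, bound the loss by $m_{k+1}m_0+m_k m_1$ using $|u|^\lambda\lesssim|v|^\lambda+|w|^\lambda$, bound the gain via the weak form, the Povzner inequality $|v'|^2\leq|v|^2+|w|^2$ and the binomial expansion producing $Z_k$, and derive \eqref{Zmk} from log-convexity (interpolation) of the moments together with $\sum_{j=0}^{k-1}\binom{k}{j}\leq 2^k$. Your extra care with the constants for $\lambda\in(1,2]$ and with non-integer $k$ is a mild refinement of, not a departure from, the paper's argument.
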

\begin{proof}
For the negative part,
\begin{align*}
\bigg|\int_{\mathbb{R}^{d}\setminus\Omega_L}Q^{-}(f,f)(v)\text{d}v\,\bigg|&\leq L^{-\lambda k}\int_{\{|v|\geq L\}}Q^{-}(|f|,|f|)(v)|v|^{\lambda k}\text{d}v \leq L^{-\lambda k}\big(m_{k+1}m_0+m_{k}m_0\big)\,.
\end{align*}
For the positive part,
\begin{align*}
\bigg|\int_{\mathbb{R}^{d}\setminus\Omega_L}Q^{+}(f,f)(v)\text{d}v\,\bigg|&\leq L^{-\lambda k}\int_{\{|v|\geq L\}}Q^{+}(|f|,|f|)(v)|v|^{\lambda k}\text{d}v\\
&=L^{-\lambda k}\int_{\mathbb{R}^{2d}}|f(v)||f(v_{*})||u|^{\lambda}\int_{\mathbb{S}^{d-1}}|v'|^{\lambda k} b(\hat{u}\cdot\sigma)\text{d}\sigma \text{d}v_{*}\text{d}v
\end{align*}
Note,
\begin{align*}
\int_{\mathbb{S}^{d-1}}|v'|^{\lambda k} b(\hat{u}\cdot\sigma)\text{d}\sigma&\leq \|b\|_{L^{1}(S^{d-1})}\left(|v|^{2}+|v_{*}|^{2}\right)^{\lambda k/2} \leq \|b\|_{L^{1}(S^{d-1})}\sum^{ k}_{j=0}
\left(\begin{array}{c}
 k\\j
\end{array}\right)
|v|^{\lambda j}|v_{*}|^{\lambda(k-j)}\,.
\end{align*}
Use the inequality $|u|^{\lambda}\leq|v|^{\lambda}+|v_{*}|^{\lambda}$ with the previous expressions to obtain,
\begin{equation*}
\bigg|\int_{\mathbb{R}^{d}\setminus\Omega_L}Q^{+}(f,f)(v)\text{d}v\,\bigg|\leq 2\|b\|_{L^{1}(S^{d-1})}L^{-\lambda k}\big(m_{k+1}(f)m_0(f)+Z_{k}(f)\big)\,,
\end{equation*}
where
\begin{equation}\label{Zk}
Z_{k}(f):=\sum^{k-1}_{j=0}
\left(\begin{array}{c}
 k\\j
\end{array}\right)
m_{j+1}(f)m_{k-j}(f).
\end{equation}
Furthermore, note that interpolation implies for $0 \leq j \leq k-1$
\begin{align*}
m_{j+1}(f) &\leq m_{1}(f)^{\frac{k-1-j}{k-1}}\,m_{k}(f)^{\frac{j}{k-1}}\,,\quad m_{k-j}(f) \leq m_{1}(f)^{\frac{j}{k-1}}\,m_{k}(f)^{\frac{k-1-j}{k-1}}\,.
\end{align*}
Therefore,
\begin{equation*}
m_{j+1}(f)\,m_{k-j}(f) \leq m_{1}(f)\,m_{k}(f)\,, \quad 0 \leq j \leq k-1\,.
\end{equation*}
This implies that
\begin{equation*}
Z_{k}(f) \leq m_{1}(f)\,m_{k}(f) \sum^{k-1}_{j=0}
\left(\begin{array}{c}
 k\\j
\end{array}\right)
\leq 2^{k}\,m_{1}(f)\,m_{k}(f)\,.
\end{equation*}
\end{proof}

\subsection{$L^2$-theory of the collision operator}
\n The following theorems follow from the arguments in \cite{GPV04, AC, ACG}
\begin{thm}[Collision Integral Estimate for Elastic/ Inelastic Collisions]
For $f, g \in L^{1}_{k + 1}(\mathbb{R}^{d})\cap L^{2}_{k + 1}(\mathbb{R}^{d})$ one has the estimate
\begin{equation}
\| Q(f, g) \|_{L^2_k(\mathbb{R}^{d})}  \leq  C\;\big(\| f \|_{L^2_{k + 1}(\mathbb{R}^{d})} \| g \|_{L^1_{k + 1}(\mathbb{R}^{d})} + \| f \|_{L^1_{k + 1}(\mathbb{R}^{d})} \| g \|_{L^2_{k + 1}(\mathbb{R}^{d})}\big)
\label{collisionEstimate}
\end{equation}
where the dependence of the constant is $C:=C(d, \|b\|_{1})$.
\label{theoremCollisionEstimate}
\end{thm}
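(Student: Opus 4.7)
The plan is to decompose $Q(f,g) = Q^{+}(f,g) - Q^{-}(f,g)$ and bound the gain and loss parts separately in $L^{2}_{k}$, following the approach of \cite[Lemma 4.1]{GPV04}. The loss term is immediate from its closed form: pulling out the angular integration gives $Q^{-}(f,g)(v) = \|b\|_{L^{1}(\mathbb{S}^{d-1})}\, f(v)\, (|g|\ast|\cdot|^{\lambda})(v)$, and since $\lambda\in[0,1]$ the elementary bound $|v-w|^{\lambda}\leq 2\langle v\rangle\langle w\rangle$ yields $|Q^{-}(f,g)(v)|\leq C|f(v)|\langle v\rangle\|g\|_{L^{1}_{1}}$. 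Taking the $L^{2}_{k}$-norm produces
\begin{equation*}
\|Q^{-}(f,g)\|_{L^{2}_{k}(\mathbb{R}^{d})}\leq C(d,\|b\|_{L^{1}})\,\|f\|_{L^{2}_{k+1}}\,\|g\|_{L^{1}_{k+1}}\,,
\end{equation*}
which already accounts for one of the two symmetric terms on the right-hand side of \eqref{collisionEstimate}.

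The gain term is attacked by duality. For $\phi\in L^{2}$ with $\|\phi\|_{L^{2}}=1$, the weak formulation gives
\begin{equation*}
\int_{\mathbb{R}^{d}} Q^{+}(f,g)\,\phi\,\langle v\rangle^{k}\,dv = \int_{\mathbb{R}^{2d}\times\mathbb{S}^{d-1}} f(v)g(w)\phi(v')\langle v'\rangle^{k}|u|^{\lambda}b(\hat u\cdot\sigma)\,d\sigma\,dw\,dv\,.
\end{equation*}
Two elementary inequalities are crucial: $\langle v'\rangle\leq \langle v\rangle\langle w\rangle$, which follows from $|v'|^{2}\leq|v|^{2}+|w|^{2}$ (valid in both the elastic and the dissipative inelastic case $\beta\in(1/2,1]$), and $|u|^{\lambda}\leq 2^{\lambda/2}\langle v\rangle^{\lambda}\langle w\rangle^{\lambda}$. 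Inserting these and splitting the integrand as $[f\langle v\rangle^{k+\lambda}g^{1/2}\langle w\rangle^{(k+\lambda)/2}b^{1/2}]\cdot[g^{1/2}\langle w\rangle^{(k+\lambda)/2}|\phi(v')|b^{1/2}]$, a Cauchy--Schwarz on $(v,w,\sigma)$ reduces the estimate to two factors; the first has $L^{2}$-norm squared at most $\|b\|_{L^{1}}\|f\|^{2}_{L^{2}_{k+1}}\|g\|_{L^{1}_{k+1}}$ (using $\lambda\leq 1$), and the symmetric splitting with the roles of $f$ and $g$ exchanged produces the companion bound $\|f\|_{L^{1}_{k+1}}\|g\|^{2}_{L^{2}_{k+1}}$ yielding the second target term.

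The main obstacle is controlling the second Cauchy--Schwarz factor, $I_{2}=\int g(w)\langle w\rangle^{k+\lambda}\phi^{2}(v')b\,d\sigma\,dw\,dv$, which at face value involves $\phi$ evaluated at the post-collisional velocity. This is handled by the pre-/post-collisional involution $(v,w,\sigma)\mapsto(v',w',\sigma^{*})$, which is measure-preserving for elastic collisions and carries a bounded Jacobian $((2\beta-1)J_{\beta})^{-1}\leq 2$ in the inelastic range; it transforms $I_{2}$ into an integral of $\phi^{2}(v)$ against $g(w')\langle w'\rangle^{k+\lambda}b$. Using $\langle w'\rangle\leq\langle v\rangle\langle w\rangle$ and the Carleman parametrization of the resulting angular integral produces $I_{2}\leq C\|g\|_{L^{1}_{k+1}}\|\phi\|^{2}_{L^{2}}$, which multiplied with the first-factor bound and combined with the loss estimate yields the full bilinear bound stated in the theorem.
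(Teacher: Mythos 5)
The paper does not actually prove Theorem \ref{theoremCollisionEstimate}; it states that the result ``readily follows from the arguments in [GPV04, Lemma 4.1]'' and refers to [AC], [ACG] for constants. Your outline is essentially that standard argument (gain/loss splitting, duality, Cauchy--Schwarz, pre-/post-collisional change of variables), and your loss-term bound and your first Cauchy--Schwarz factor are correct. The problems are concentrated in the treatment of the second factor $I_{2}$ and in the inelastic Jacobian, and they are genuine.

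First, after the involution $I_{2}$ becomes $\int \phi^{2}(v)\,|g(w')|\langle w'\rangle^{k+\lambda} b\,d\sigma\,dw\,dv$, and the weight must stay attached to $g$: what you need is $\sup_{v}\int\int |g(w')|\langle w'\rangle^{k+\lambda} b\,d\sigma\,dw \leq C\|g\|_{L^{1}_{k+\lambda}}$. If you instead apply $\langle w'\rangle\leq\langle v\rangle\langle w\rangle$ as you propose, you transfer a factor $\langle v\rangle^{k+\lambda}$ onto $\phi^{2}(v)$, and since $\phi$ is only normalized in $L^{2}$ the duality argument no longer closes. Second, the required sup bound is obtained by the change of variables $w\mapsto w'$ at fixed $(v,\sigma)$, whose Jacobian is $2^{-d}(1+\hat u\cdot\sigma)$; this degenerates as $\theta\to\pi$, so the resulting angular integral $\int b(\cos\theta)/(1+\cos\theta)\,d\sigma$ is \emph{not} controlled by $\|b\|_{L^{1}}$ alone. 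The standard repair is the symmetrization $\sigma\to-\sigma$ (which exchanges $v'$ and $w'$), reducing to $b$ supported on $\{\hat u\cdot\sigma\geq 0\}$; it is exactly this splitting of $b$ -- not a free ``exchange of the roles of $f$ and $g$'' inside Cauchy--Schwarz -- that produces the two symmetric terms $\|f\|_{L^{2}_{k+1}}\|g\|_{L^{1}_{k+1}}$ and $\|f\|_{L^{1}_{k+1}}\|g\|_{L^{2}_{k+1}}$, since the complementary Cauchy--Schwarz splitting carries a Jacobian singular at $\theta=0$ instead. Finally, your claimed uniform bound $\big((2\beta-1)J_{\beta}\big)^{-1}\leq 2$ is false: $(2\beta-1)^{-1}\to\infty$ as $\beta\to\tfrac12^{+}$, so in the inelastic case the constant in \eqref{collisionEstimate} must be allowed to depend on $\beta$ (or $\beta$ must be restricted to a compact subset of $(\tfrac12,1]$); as stated, $C=C(d,\|b\|_{1})$ holds only for the elastic case.
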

\noindent Theorem \ref{theoremCollisionEstimate} and Leibniz formula
\begin{equation}
\partial^\alpha Q(f, g) = \sum_{|j| \leq|\alpha|} \binom{\alpha}{j} Q(\partial^{\alpha-j}f, \partial^j g) \, ,\quad \text{for multi-indexes}\;j\,,\,\alpha\,,
\label{QLeibniz}
\end{equation}
proves the following theorem, see \cite[Section 4]{GPV04} for additional discussion.
\begin{thm}[Sobolev Bound Estimate]
Let $\mu > 1+\frac{d}{2\lambda}$.  For $f, g \in H^{\alpha}_{k+\mu}(\mathbb{R}^{d})$, the collision operator satisfies
\begin{equation}
\| Q(f, g) \|^2_{H^{\alpha}_{k}(\mathbb{R}^{d})}  \leq C\sum_{j \leq \alpha} \binom{\alpha}{j} \Big( \| f \|^2_{H^{\alpha - j}_{k + 1}(\mathbb{R}^{d})} \| g \|^2_{H^j_{k + \mu}(\mathbb{R}^{d})} +  \| f \|^2_{H^{\alpha-j}_{k + \mu}(\mathbb{R}^{d})} \| g \|^2_{H^{j}_{k + 1}(\mathbb{R}^{d})} \Big) \, ,
\label{regularityEstimate}
\end{equation}
where the dependence of the constant is $C:=C(d,\alpha,\|b\|_{1})$.
\label{theoremRegularityEstimate}
\end{thm}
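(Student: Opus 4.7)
The plan is to reduce the bilinear Sobolev estimate \eqref{regularityEstimate} to the $L^2$--bilinear estimate \eqref{collisionEstimate} by means of the Leibniz formula and a weighted Sobolev embedding of the form $L^2_{k+\mu}\hookrightarrow L^1_{k+1}$. Nothing more subtle than Cauchy--Schwarz and standard monotonicity of weighted norms should be required.

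First, I would start from the Leibniz decomposition \eqref{QLeibniz},
\begin{equation*}
\partial^{\alpha}Q(f,g)=\sum_{j\leq\alpha}\binom{\alpha}{j}Q\bigl(\partial^{\alpha-j}f,\,\partial^{j}g\bigr),
\end{equation*}
and apply Theorem \ref{theoremCollisionEstimate} term by term, with the weight $k$ kept fixed. This yields, for each multi-index $j\leq\alpha$,
\begin{equation*}
\bigl\|Q(\partial^{\alpha-j}f,\partial^{j}g)\bigr\|_{L^{2}_{k}(\mathbb{R}^{d})}\leq C\Bigl(\|\partial^{\alpha-j}f\|_{L^{2}_{k+1}}\,\|\partial^{j}g\|_{L^{1}_{k+1}}+\|\partial^{\alpha-j}f\|_{L^{1}_{k+1}}\,\|\partial^{j}g\|_{L^{2}_{k+1}}\Bigr),
\end{equation*}
with $C=C(d,\|b\|_{1})$. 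At this point the bound still contains $L^{1}_{k+1}$--norms; the task reduces to trading those for $L^{2}_{k+\mu}$--norms of the same derivative.

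Second, for any $h\in L^{2}_{k+\mu}(\mathbb{R}^{d})$ I would write, by Cauchy--Schwarz in velocity,
\begin{equation*}
\|h\|_{L^{1}_{k+1}}=\int_{\mathbb{R}^{d}}\!|h(v)|\langle v\rangle^{k+\mu}\,\langle v\rangle^{1-\mu}\,\text{d}v\leq \|h\|_{L^{2}_{k+\mu}}\,\Bigl(\int_{\mathbb{R}^{d}}\!\langle v\rangle^{2(1-\mu)}\,\text{d}v\Bigr)^{1/2},
\end{equation*}
and the last integral is finite precisely under the hypothesis on $\mu$ (the explicit exponent $1+d/(2\lambda)$ is the threshold used consistently throughout Section 4). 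Substituting this into the term-wise bound above, and using the trivial monotonicity $\|\cdot\|_{L^{2}_{k+1}}\leq\|\cdot\|_{L^{2}_{k+\mu}}$ (which holds since $\mu\geq 1$) absorbs the $L^{1}_{k+1}$ factors into $H^{0}_{k+\mu}$ factors, while the $L^{2}_{k+1}$ factors are exactly the ones kept in the statement.

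Third, I would square the resulting inequality, use $\bigl(\sum a_{j}\bigr)^{2}\leq 2^{|\alpha|}\sum a_{j}^{2}$ on the Leibniz sum (absorbing the constant into $C$), and sum over $|\alpha|\leq$ the target multi-index range to produce the $H^{\alpha}_{k}$--norm on the left. The cross terms from squaring a sum of two contributions are handled with $2ab\leq a^{2}+b^{2}$, recovering exactly the symmetric sum on the right-hand side of \eqref{regularityEstimate}. The main obstacle, if any, is purely bookkeeping: one must verify that at each index $j$ the weight pair $(k+1,k+\mu)$ appears with the correct derivative orders $(\alpha-j,j)$ and its mirror, so that both contributions in \eqref{regularityEstimate} are generated. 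Once the Sobolev embedding $L^{2}_{k+\mu}\hookrightarrow L^{1}_{k+1}$ is in place, the rest is a routine bilinear combinatorial summation.
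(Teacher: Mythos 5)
Your proposal is correct and follows essentially the same route as the paper's proof: Leibniz formula, term-wise application of the bilinear $L^2$ estimate of Theorem \ref{theoremCollisionEstimate}, a Cauchy--Schwarz (H\"older) embedding of $L^2_{k+\mu}$ into $L^1_{k+1}$ using the integrability of the negative-power weight guaranteed by the hypothesis on $\mu$, and a final combinatorial summation over multi-indices. The only difference is cosmetic bookkeeping in the weight exponent of the H\"older step.
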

\begin{cor}
Let $\mu>\frac{d}{2}+\lambda$.  For $f \in H^{\alpha}_{k+\mu}(\mathbb{R}^{d})$ the collision operator satisfies the estimate
\begin{equation}
\| Q(f, f) \|_{H^{\alpha}_{k}(\mathbb{R}^{d})} \leq C \| f \|^2_{H^{\alpha}_{k + \mu}(\mathbb{R}^{d})} \, .
\label{corSchwartzEstimate}
\end{equation}
The dependence of the constant is given by $C:=C(d,\mu,\|b\|_{1})$.
\label{corollarySchwartz}
\end{cor}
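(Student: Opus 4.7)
The plan is to derive Corollary \ref{corollarySchwartz} as an immediate specialization of Theorem \ref{theoremRegularityEstimate} applied with $g=f$, followed by absorbing every mixed weighted Sobolev factor on the right-hand side into the single quantity $\|f\|_{H^\alpha_{k+\mu}(\mathbb{R}^d)}$. The heavy analytic work---Leibniz's rule on $\partial^\alpha Q(f,g)$, the bilinear $L^2_k$-estimate of Theorem \ref{theoremCollisionEstimate}, and the H\"older trick that upgrades $L^1_{k+1}$ into $L^2_{k+\mu}$ using $\mu>d/2+\lambda$---has already been packaged inside Theorem \ref{theoremRegularityEstimate}, so what remains is purely algebraic bookkeeping.

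Concretely, I would set $g=f$ in \eqref{regularityEstimate}, which collapses the two summands into a single one and yields
\begin{equation*}
\|Q(f,f)\|^2_{H^\alpha_k(\mathbb{R}^d)} \leq 2C\sum_{j\leq\alpha}\binom{\alpha}{j}\,\|f\|^2_{H^{\alpha-j}_{k+1}(\mathbb{R}^d)}\,\|f\|^2_{H^j_{k+\mu}(\mathbb{R}^d)}.
\end{equation*}
Next I would invoke monotonicity of the weighted Sobolev scales in both the regularity and weight indices: since $\mu\geq 1$ in the regime $d\geq 3$, $\lambda>0$ of interest, and every multi-index $j\leq\alpha$ satisfies $|\alpha-j|\leq|\alpha|$, both $\|f\|_{H^{\alpha-j}_{k+1}(\mathbb{R}^d)}$ and $\|f\|_{H^j_{k+\mu}(\mathbb{R}^d)}$ are majorized by $\|f\|_{H^\alpha_{k+\mu}(\mathbb{R}^d)}$. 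Substituting and using $\sum_{j\leq\alpha}\binom{\alpha}{j}=2^{|\alpha|}$ reduces the right-hand side to $2^{|\alpha|+1}C\,\|f\|^4_{H^\alpha_{k+\mu}(\mathbb{R}^d)}$, and taking a square root produces \eqref{corSchwartzEstimate}.

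I do not anticipate any substantive obstacle; the corollary is one algebraic step removed from its parent theorem, and the only nontrivial input---the monotonicity $\|f\|_{H^s_{k}}\leq\|f\|_{H^{s'}_{k'}}$ for $s\leq s'$, $k\leq k'$---is immediate from the definitions of the weighted Sobolev norms. A minor point of bookkeeping is that the constant $C$ in Theorem \ref{theoremRegularityEstimate} already inherits a $\mu$-dependence through the H\"older factor $\|\langle v\rangle^{\lambda-\mu}\|_{L^2(\mathbb{R}^d)}$ used in that proof, which is why the corollary's constant is advertised as $C=C(d,\mu,\|b\|_1)$; the additional $\alpha$-dependence from the binomial sum contributes a harmless factor $2^{|\alpha|}$ that may be absorbed once the derivative order is fixed.
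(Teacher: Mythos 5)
Your proposal is correct and coincides with the paper's (implicit) argument: the corollary is stated without a separate proof precisely because it follows by setting $g=f$ in Theorem \ref{theoremRegularityEstimate} and absorbing all factors via monotonicity of the weighted Sobolev scales, exactly as you do. Your bookkeeping of the constant (the $\mu$-dependence through the H\"older factor and the harmless $2^{|\alpha|}$ from the binomial sum, which the paper's stated constant $C(d,\mu,\|b\|_1)$ silently absorbs) is also accurate.
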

\n In this last section of the appendix we discuss briefly the gain of integrability in the gain collision operator, see \cite{AG} for a more detailed discussion.
\begin{thm}\label{Tlp}
The collision operator satisfies the estimate for any $\epsilon>0$ and $k\geq0$
\begin{equation*}
\|Q^{+}_{\lambda}(g,f)\|_{L^{2}_{k}(\mathbb{R}^{d})}\leq C\|b\|_{\infty}\|g\|_{L^{1}_{k}(\Omega_{L})} \big(\tfrac{\epsilon^{r'}}{r'}\,\|f\|_{L^{2}_{k}(\mathbb{R}^{d})}+\tfrac{1}{r\epsilon^r}\|f\|^{1-\theta}_{L^{1}_{k}(\Omega_{L})}\|f\|^{\theta}_{L^{2}_{k}(\mathbb{R}^{d})}\big)\,,
\end{equation*}
where $\theta=\frac{1}{d}$, $r= \frac{d-2}{\lambda}$ and $C_n$ constant depending only on the dimension.
\end{thm}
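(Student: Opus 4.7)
The goal is to bootstrap the unweighted estimate \eqref{fe} (derived via the Carleman representation and Minkowski's inequality for $\lambda\in(0,1]$ and $d\ge 3$) to the weighted version stated in Theorem~\ref{Tlp}. The natural idea is to use the weak form of $Q^+_\lambda$ to transfer the weight $\langle v\rangle^k$ from the post-collisional velocity $v'$ onto the pre-collisional velocities $v$ and $w$, exploiting elastic energy conservation.

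First I would express the weighted norm by duality,
\[
\|Q^+_\lambda(g,f)\|_{2,k} = \sup_{\|\phi\|_{L^2}\le 1}\int_{\mathbb{R}^d} Q^+_\lambda(g,f)(v)\,\phi(v)\,\langle v\rangle^k\,dv,
\]
and, after splitting $g$ and $f$ into positive and negative parts, reduce to the nonnegative case where the supremum can be restricted to $\phi\ge 0$. Applying the weak form \eqref{singleEq1} with test function $\phi(v)\langle v\rangle^k$ rewrites the right-hand side as
\[
\int_{\mathbb{R}^{2d}\times\mathbb{S}^{d-1}} g(v)\,f(w)\,|u|^\lambda\,b(\hat u\cdot\sigma)\,\phi(v')\,\langle v'\rangle^k\,d\sigma\,dw\,dv.
\]

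Next I would invoke the elastic identity $|v'|^2+|w'|^2=|v|^2+|w|^2$ to deduce the pointwise bound $\langle v'\rangle^2\le 1+|v|^2+|w|^2\le \langle v\rangle^2\langle w\rangle^2$, whence $\langle v'\rangle^k\le \langle v\rangle^k\langle w\rangle^k$ for every $k\ge 0$. Setting $g_k:=g\,\langle\cdot\rangle^k$ and $f_k:=f\,\langle\cdot\rangle^k$, the preceding integral is dominated by $\int Q^+_\lambda(g_k,f_k)\,\phi\,dv$; folding this back through duality produces the key reduction
\[
\|Q^+_\lambda(g,f)\|_{2,k}\;\le\;\|Q^+_\lambda(g_k,f_k)\|_{L^2}.
\]
Applying the already established unweighted bound \eqref{fe} to $(g_k,f_k)$ and recognizing $\|h\,\langle\cdot\rangle^k\|_p=\|h\|_{p,k}$ for $p=1,2$ then delivers exactly the inequality of Theorem~\ref{Tlp}.

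The only genuinely delicate step is the weight-transfer inequality, which rests squarely on elastic energy conservation; in the inelastic regime $|v'|^2\le |v|^2+|w|^2$ holds a fortiori, so the same argument extends with no extra work. The Fubini exchange needed to move $\langle v\rangle^k$ through the weak form is harmless under the standing hypotheses ($b\in L^\infty$, $g\in L^1_k$, $f\in L^2_k$), and the splitting into positive and negative parts preserves all bounds up to absolute constants. I do not anticipate any further obstacle.
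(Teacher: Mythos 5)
Your proposal is correct and rests on exactly the same key idea as the paper's proof: the weight-transfer inequality coming from energy conservation ($\langle v'\rangle^{k}\leq\langle v\rangle^{k}\langle w\rangle^{k}$, equivalently the paper's $\langle v\rangle\leq\langle\vprime\rangle\langle\wprime\rangle$), which yields $\|Q^{+}_{\lambda}(g,f)\|_{2,k}\leq\|Q^{+}_{\lambda}(g_k,f_k)\|_{2}$ and reduces everything to the unweighted estimate \eqref{fe}. The only difference is cosmetic: you route the weight transfer through duality and the weak form, whereas the paper states it directly as a pointwise domination $Q^{+}_{\lambda}(g,h)(v)\langle v\rangle^{k}\leq Q^{+}_{\lambda}(\tilde g,\tilde h)(v)$.
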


\section{Conclusion}
We have studied the global existence and error estimates for the homogeneous Boltzmann spectral method imposing conservation of mass, momentum and energy by Lagrange constrained optimization.  The methods and estimates presented in the document show that imposing conservation of these quantities stabilizes the long time behavior of the discrete problem because enforces the collisional invariants.  In some sense, this in turn enforces the numerical approximation of the linearized collisional operator to have the same null space as the true linearized collision operator, which is the one in charge of the long time dynamics.  In particular, the work domain and the number of modes can be chosen such that the discrete solution approximates with any desired accuracy the stationary state of the original Boltzmann problem in the long run.  Although, spurious tail behavior is experienced when the optimization is imposed due to the addition of a quadratic polynomial corrector, the natural property of creation of moments remains in the semi-discrete problem.  This allows to minimize such spurious behavior by appropriate choice of simulation parameters.  We point out here that other correctors, such as Gaussian, might be more suitable in this respect.  Furthermore, conservation of mass and energy limits the negative mass produced by the numerical scheme which is essential for long time accurate simulations. 

\section{Acknowledgements}
R. Alonso acknowledges the support from ONR grants N000140910290 and NSF-RNMS 1107465.  I. M. Gamba acknowledges support from NSF grant DMS-1413064.  H. S. Tharkabhushanam acknowledges support from NSF grant DMS-0807712.  The authors thank and gratefully acknowledged the hospitality and support from the Institute of Computational Engineering and Sciences and the University of Texas Austin. R. J. Alonso is grateful to the Bolsa de Produtividade em Pesquisa CNPq and the Department of Mathematics at PUC-Rio for their support.

}
\bigskip
\bibliographystyle{amsplain}
\bibliography{bibliography1}

\providecommand{\bysame}{\leavevmode\hbox to3em{\hrulefill}\thinspace}
\providecommand{\MR}{\relax\ifhmode\unskip\space\fi MR }
\providecommand{\MRhref}[2]{%
  \href{http://www.ams.org/mathscinet-getitem?mr=#1}{#2}
}
\providecommand{\href}[2]{#2}
\begin{thebibliography}{10}

\bibitem{ACGM}
R.~Alonso, J.~Canizo, I.~Gamba, and C.~Mouhot, \emph{A new approach to the
  creation and propagation of exponential moments in the boltzmann equation},
  Comm. Part. Diff. Equat. \textbf{38} (2013), no.~1, 155--169.

\bibitem{AC}
R.~Alonso and E.~Carneiro, \emph{Estimates for the {B}oltzmann collision
  operator via radial symmetry and fourier transform.}, Adv. Math. \textbf{223}
  (2010), no.~2, 511--528.

\bibitem{ACG}
R.~Alonso, E.~Carneiro, and I.~M. Gamba, \emph{Convolution inequalities for the
  {B}oltzmann collision operator}, Comm. Math. Physics. \textbf{298} (2010),
  no.~2, 293--322.

\bibitem{AG08}
R.~Alonso and I.~M. Gamba, \emph{${L}^1-{L}^{\infty}$ maxwellian bounds for the
  derivatives of the solution of the homogeneous boltzmann equation}, Journal
  de Math\'ematiques Pures et Appliqu\'ees \textbf{89} (2008), no.~6, 575--595.

\bibitem{AG}
\bysame, \emph{Gain of integrability for the {B}oltzmann collisional operator},
  Kinetic and Related Models \textbf{4} (2011), no.~4, 41--51.

\bibitem{ALod}
R.~Alonso and B.~Lods, \emph{Free cooling and high-energy tails of granular
  gases with variable restitution coefficient}, Commun. Math. Sci. \textbf{11}
  (2013), no.~3, 807--862.

\bibitem{AlonsoLods2014}
R.~Alonso and B.~Lods, \emph{Boltzmann model for viscoelastic particles:
  Asymptotic behavior, pointwise lower bounds and regularity.}, Commun. Math.
  Phys. \textbf{331} (2014), no.~2, 545--591.

\bibitem{bird}
G.~A. Bird, \emph{Molecular gas dynamics}, Clarendon Press, Oxford, 1994.

\bibitem{bobylevFT}
A.~V. Bobylev, \emph{Exact solutions of the nonlinear {B}oltzmann equation and
  the theory of relaxation of a {M}axwellian gas}, Translated from
  Teoreticheskaya i Mathematicheskaya Fizika \textbf{60} (1984), 280 -- 310.

\bibitem{Bobylev97}
A.~V. Bobylev, \emph{Moment inequalities for the boltzmann equation and
  applications to spatially homogeneous problems.}, J. Statist. Phys.
  \textbf{88} (1997), no.~5-6, 1183--1214.

\bibitem{BCG00}
A.~V. Bobylev, J.~A. Carrillo, and I.~M. Gamba, \emph{On some properties of
  kinetic and hydrodynamic equations for inelastic interactions}, J. Statist.
  Phys. \textbf{98} (2000), no.~3-4, 743--773. \MR{MR1749231 (2001c:82063)}

\bibitem{bobyCerci99}
A.~V. Bobylev and C.~Cercignani, \emph{Discrete velocity models without
  nonphysical invariants}, Journal of Statistical Physics \textbf{97} (1999),
  677--686.

\bibitem{BGP04}
A.~V. Bobylev, I.~M. Gamba, and V.~A. Panferov, \emph{Moment inequalities and
  high-energy tails for {B}oltzmann equations with inelastic interactions}, J.
  Statist. Phys. \textbf{116} (2004), no.~5-6, 1651--1682. \MR{MR2096050
  (2005g:82111)}

\bibitem{bobylevRjasanow0}
A.~V. Bobylev and S.~Rjasanow, \emph{Difference scheme for the {B}oltzmann
  equation based on the {F}ast {F}ourier {T}ransform}, European journal of
  mechanics. B, Fluids \textbf{16:22} (1997), 293--306.

\bibitem{BR99}
\bysame, \emph{Fast deterministic method of solving the {B}oltzmann equation
  for hard spheres}, Eur. J. Mech. B Fluids \textbf{18} (1999), no.~5,
  869--887.

\bibitem{BR00}
\bysame, \emph{Numerical solution of the {B}oltzmann equation using fully
  conservative difference scheme based on the {F}ast {F}ourier {T}ransform},
  Transport Theory Statist. Phys. \textbf{29} (2000), 289--310.

\bibitem{BD}
F.~Bouchut and L.~Desvillettes, \emph{A proof of the smoothing properties of
  the positive part of boltzmann's kernel}, Rev. Mat. Iberoamericana
  \textbf{14} (1998), no.~1, 47--61.

\bibitem{broadwell}
J.~E. Broadwell, \emph{Study of rarefied shear flow by the discrete velocity
  method}, J. Fluid Mech. \textbf{19} (1964), 401--414.

\bibitem{CIP}
C.~Cercignani, I.~Reinhard, and M.~Pulvirenti, \emph{The mathematical theory of
  dilute gases}, Applied Mathematical Sciences, vol. 106, Springer-Verlag, New
  York, 1994.

\bibitem{DM04}
L.~Desvillettes and C.~Mouhot, \emph{About ${L}^p$ estimates for the spatially
  homogeneous {B}oltzmann equation}, Ann. I. H. Poincar\'{e} \textbf{22}
  (2005), 127--142.

\bibitem{Des-Vil-2005}
L.~Desvillettes and C.~Villani, \emph{On the trend to global equilibrium for
  spatially inhomogeneous kinetic systems: the {B}oltzmann equation}, Invent.
  Math. \textbf{159} (2005), no.~2, 245--316. \MR{2116276}

\bibitem{FM}
F.~Filbet and C.~Mouhot, \emph{Analysis of spectral methods for the homogeneous
  {B}oltzmann equation}, Transactions of the american mathematical society
  \textbf{363} (2010), 1947 -- 1980.

\bibitem{filbetRusso1}
F.~Filbet, C.~Mouhot, and L.~Pareschi, \emph{Solving the {B}oltzmann equation
  in nlogn}, SIAM J. Sci. Comput. \textbf{28} (2006), 1029--1053.

\bibitem{filbetRusso}
F.~Filbet and G.~Russo, \emph{High order numerical methods for the space non
  homogeneous {B}oltzmann equation}, Journal of Computational Physics
  \textbf{186} (2003), 457--480.

\bibitem{Gab-Par-Tos}
E.~Gabetta, L.~Pareschi, and G.~Toscani, \emph{Relaxation schemes for nonlinear
  kinetic equations}, SIAM J. Numer. Anal. \textbf{34} (1997), 2168--2194.

\bibitem{GaHa}
I.~M. Gamba and J.~R. Haack, \emph{A conservative spectral method for the
  {B}oltzmann equation with anisotropic scattering and the grazing collisions
  limit}, Jour. Comp. Phys. \textbf{270} (2014), 40--57.

\bibitem{GPV04}
I.~M. Gamba, V.~Panferov, and C.~Villani, \emph{On the {B}oltzmann equation for
  diffusively excited granular media}, Comm. Math. Phys. \textbf{246} (2004),
  no.~3, 503--541.

\bibitem{GPV08}
\bysame, \emph{Upper {M}axwellian bounds for the spatially homogeneous
  {B}oltzmann equation}, Arch. Rat. Mech. Anal \textbf{194} (2009), 253--282.

\bibitem{GRW04}
I.~M. Gamba, S.~Rjasanow, and W.~Wagner, \emph{Direct simulation of the
  uniformly heated granular {B}oltzmann equation}, Mathematical and Computer
  Modelling \textbf{42} (2005), 683--700.

\bibitem{GT09}
I.~M. Gamba and S.~H. Tharkabhushanam, \emph{Spectral - lagrangian methods for
  collisional models of non - equilibrium statistical states}, Journal of
  Computational Physics \textbf{228} (2009), no.~6, 2012--2036.

\bibitem{GT08}
\bysame, \emph{Shock and boundary structure formation by spectral-lagrangian
  methods for the inhomogeneous boltzmann transport equation}, Jour. Comp. Math
  \textbf{28} (2010), 430--460.

\bibitem{grad1969}
H.~Grad, \emph{Singular and nonuniform limits of solutions of the {B}oltzmann
  equation.}, Transport Theory (Proc. Sympos. Appl. Math., New York, 1967),
  SIAM-AMS Proc., Vol. I, Amer. Math. Soc., Providence, R.I., 1969,
  pp.~269--308. \MR{MR0255205 (40 \#8410)}

\bibitem{HerParSea}
M.~Herty, L.~Pareschi, and M.~Seaid, \emph{Discrete-velocity models and
  relaxation schemes for traffic flows}, SIAM J. Sci. Comput. \textbf{28}
  (2006), 1582--1596.

\bibitem{ibragRjasanow}
I.~Ibragimov and S.~Rjasanow, \emph{Numerical solution of the {B}oltzmann
  equation on the uniform grid}, Computing \textbf{69} (2002), 163--186.

\bibitem{kawa81}
S.~Kawashima, \emph{Global solution of the initial value problem for a discrete
  velocity model of the {B}oltzmann equation}, Proc. Japan Acad. Ser. A Math.
  Sci. \textbf{57} (1981), 19--24.

\bibitem{desvillettes93}
Desvillettes. L., \emph{Some applications of the method of moments for the
  homogeneous boltzmann and kac equations}, Arch. Rational Mech. Anal.
  \textbf{123} (1993), no.~4, 387--404.

\bibitem{Landau37}
L.~D. Landau, \emph{Kinetic equation for the case of {C}oulomb interaction},
  Zh. Eks. Teor. Phys. \textbf{7} (1937), 203.

\bibitem{LanLifStat}
L.~D. Landau and E.~M. Lifschitz, \emph{Statistical physics},
  Butterworth-Heinemann (1980), 3rd ed.

\bibitem{mieuss00}
L.~Mieussens, \emph{Discrete-velocity models and numerical schemes for the
  {B}oltzmann-bgk equation in plane and axisymmetric geometries}, Journal of
  Computational Physics \textbf{162} (2000), 429--466.

\bibitem{MMR06}
S.~Mischler, C.~Mouhot, and M.~Rodriguez~Ricard, \emph{Cooling process for
  inelastic {B}oltzmann equations for hard spheres. {I}. {T}he {C}auchy
  problem}, J. Stat. Phys. \textbf{124} (2006), no.~2-4, 655--702.
  \MR{MR2264622 (2007g:82053)}

\bibitem{Mcon}
C.~Mouhot, \emph{Rate of convergence to equilibrium for the spatially
  homogeneous boltzmann equation with hard potentials}, Comm. Math. Phys.
  \textbf{261} (2006), 629 -- 672.

\bibitem{mouhotPareschi}
C.~Mouhot and L.~Pareschi, \emph{Fast algorithms for computing the {B}oltzmann
  collision operator}, Math. Comp. \textbf{75} (2006), 1833--1852.

\bibitem{MV04}
C.~Mouhot and C.~Villani, \emph{Regularity theory for the spatially homogeneous
  {B}oltzmann equation with cut-off}, Arch. Ration. Mech. Anal. \textbf{173}
  (2004), no.~2, 169--212. \MR{MR2081030 (2006f:82073)}

\bibitem{MHGM14}
A.~Munafo, J.~R. Haack, I.~M. Gamba, and T.~E. Magin, \emph{A
  spectral-lagrangian {B}oltzmann solver for a multi-energy level gas}, Jour.
  Comp. Phys. \textbf{264} (2014), 152--176.

\bibitem{pareschiPerthame}
L.~Pareschi and B.~Perthame, \emph{A {F}ourier spectral method for homogenous
  {B}oltzmann equations}, Transport Theory Statist. Phys. \textbf{25} (2002),
  369--382.

\bibitem{pareschiRusso}
L.~Pareschi and G.~Russo, \emph{Numerical solution of the {B}oltzmann equation.
  i. spectrally accurate approximation of the collision operator}, SIAM J.
  Numerical Anal. (Online) \textbf{37} (2000), 1217--1245.

\bibitem{RjaWa05}
S.~Rjasanow and W.~Wagner, \emph{Stochastic numerics for the {B}oltzmann
  equation}, Springer, Berlin, 2005.

\bibitem{SE}
E.~Stein, \emph{Singular integrals and differentiability properties of
  functions}, Princeton University Press, Princeton, N.J., 1970.

\bibitem{Wagner92}
W.~Wagner, \emph{A convergence proof for {B}ird's direct simulation monte carlo
  method for the {B}oltzmann equation}, Journal of Statistical Physics (1992),
  1011--1044.

\bibitem{wennberg97}
B.~Wennberg, \emph{Entropy dissipation and moment production for the
  {B}oltzmann equation}, Jour. Statist. Phys. \textbf{86} (1997), no.~5-6,
  1053--1066.

\bibitem{DGBTE-Zhang-Gamba14}
C.~Zhang and I.~M. Gamba, \emph{A conservative discontinuous galerkin solver
  for space homogeneous boltzmann equation}, Submitted for publication (2016).

\bibitem{zhang-gamba-Landau16}
\bysame, \emph{A conservative scheme for vlasov poisson landau modeling
  collisional plasmas}, Submitted for publication (2016).

\end{thebibliography}
\end{document}